\documentclass[reqno,11pt]{amsart}
\usepackage[margin=1in]{geometry}
\usepackage{graphicx}

\usepackage{amsthm, amsmath, amssymb, bm, bbm}
\usepackage{tikz}

\usepackage[utf8]{inputenc}
\usepackage[T1]{fontenc}

\usepackage[textsize=scriptsize,backgroundcolor=orange!5]{todonotes}

\usepackage{hyperref}
\usepackage{url}

\usepackage[noabbrev,capitalize]{cleveref}
\crefname{equation}{}{}

\usepackage{xcolor,graphics, graphicx}
\usepackage{floatrow}
\usepackage{verbatim}
\newfloatcommand{capbtabbox}{table}[][\FBwidth]


\numberwithin{equation}{section}

\newtheorem{theorem}{Theorem}[section]
\newtheorem{proposition}[theorem]{Proposition}
\newtheorem{lemma}[theorem]{Lemma}

\newtheorem{corollary}[theorem]{Corollary}
\newtheorem{conjecture}[theorem]{Conjecture}
\newtheorem*{question*}{Question}
\Crefname{question}{Question}{Questions}

\newtheorem{assumption}[theorem]{Assumption}

\theoremstyle{definition}
\newtheorem{definition}[theorem]{Definition}

\newtheorem{example}[theorem]{Example}
\newtheorem*{example*}{Example}

\theoremstyle{remark}
\newtheorem*{remark}{Remark}


\newcommand{\one}{\mathbbm{1}}
\newcommand{\wt}{\widetilde}

\DeclareMathOperator{\Sym}{Sym}
\DeclareMathOperator{\rad}{rad}
\DeclareMathOperator{\GL}{GL}

\renewcommand{\Re}{\operatorname{Re}}

\newcommand{\mb}{\mathbf}
\newcommand{\mc}{\mathcal}

\newcommand{\unr}{\text{unr}}

\renewcommand{\AA}{\mathbb{A}}

\newcommand{\FF}{\mathbb{F}}
\newcommand{\QQ}{\mathbb{Q}}
\newcommand{\RR}{\mathbb{R}}

\newcommand{\PP}{\mathcal{P}}
\newcommand{\NN}{\mathbb{N}}

\newcommand{\eps}{\varepsilon}
\renewcommand{\pmod}[1]{\ (\mathrm{mod}\ #1)}

\title{Patterns of primes in the Sato--Tate conjecture}

\author[Gillman]{Nate Gillman}
\address{Wesleyan University, Middletown, CT 06459, USA}
\email{ngillman@wesleyan.edu}

\author[Kural]{Michael Kural}
\address{Massachusetts Institute of Technology, Cambridge, MA 02139, USA}
\email{mkural@mit.edu}

\author[Pascadi]{Alexandru Pascadi}
\address{University of California, Los Angeles, CA 90095, USA}
\email{alexpascadi@ucla.edu}

\author[Peng]{Junyao Peng}
\address{Massachusetts Institute of Technology, Cambridge, MA 02139, USA}
\email{junyaop@mit.edu}

\author[Sah]{Ashwin Sah}
\address{Massachusetts Institute of Technology, Cambridge, MA 02139, USA}
\email{asah@mit.edu}

\date{\today}

\allowdisplaybreaks

\begin{document}

\maketitle

\begin{abstract}
Fix a non-CM elliptic curve $E/\mathbb{Q}$, and let $a_E(p) = p + 1 - \#E(\mathbb{F}_p)$ denote the trace of Frobenius at $p$. The Sato--Tate conjecture gives the limiting distribution $\mu_{ST}$ of $a_E(p)/(2\sqrt{p})$ within $[-1, 1]$. We establish bounded gaps for primes in the context of this distribution. More precisely, given an interval $I\subseteq [-1, 1]$, let $p_{I,n}$ denote the $n$th prime such that $a_E(p)/(2\sqrt{p})\in I$. We show $\liminf_{n\to\infty}(p_{I,n+m}-p_{I,n}) < \infty$ for all $m\ge 1$ for ``most'' intervals, and in particular, for all $I$ with $\mu_{ST}(I)\ge 0.36$. 
Furthermore, we prove a common generalization of our bounded gap result with the Green--Tao theorem.
To obtain these results, we demonstrate a Bombieri--Vinogradov type theorem for Sato--Tate primes.
\end{abstract}

\section{Introduction}\label{sec:introduction}
Let $E/\QQ$ be an elliptic curve without complex multiplication (CM), and for each prime $p$, let $a_E(p)$ denote the trace of the Frobenius endomorphism of $E/\FF_p$. 
The Sato--Tate conjecture for $\QQ$, recently proven by Barnet-Lamb, Geraghty, Harris, and Taylor \cite{BGHT11}, states that the distribution of $\cos\theta_p:= a_E(p)/(2\sqrt{p})$ is governed by the Sato--Tate measure $\mu_{ST}$. Explicitly, for $-1\le \alpha< \beta\le 1$, we have 
\[\lim_{x\to\infty}\frac{\#\{p\leq x: \alpha\leq\cos\theta_p\leq\beta\}}{\#\{p\leq x\}}=\frac{2}{\pi}\int_\alpha^\beta\sqrt{1-t^2}\,dt =: \mu_{ST}([\alpha,\beta]).\]
One can now study the distribution of primes with constraints on their trace of Frobenius.
Namely, we choose an interval $I \subseteq [-1,1]$ and consider the set of primes $p$ such that $\cos\theta_p \in I$.

There have been several important recent advances in the study of gaps between primes.
Building on the seminal work of Goldston, Pintz, and Y\i ld\i r\i m \cite{GPY09}, Zhang \cite{Z14} proved that
\[\liminf_{n\to\infty}(p_{n+1}-p_n)\leq 70\cdot 10^6,\] 
where $p_n$ denotes the $n$th prime.
Shortly thereafter, using more combinatorial methods, Maynard \cite{Ma15} synthesized the GPY framework with $k$-dimensional variant of the Selberg sieve, which allowed him to prove that $\liminf_{n\to\infty}(p_{n+m}-p_n)\ll m^3\exp(4m)$ for every $m\ge 1$.
He also strengthened Zhang's result to $\liminf_{n\to\infty}(p_{n+1}-p_n)\le 600$.
Independently, Tao (unpublished) developed the same variant of the Selberg sieve, but arrived at a slightly different conclusion. (Soon after, Polymath 8b \cite{Po14} improved this bound to $\liminf_{n\to\infty}(p_{n+1}-p_n)\le 246$.) Various authors have adapted the Maynard framework to establish bounded gaps between primes in distinguished subsets, such as primes in Beatty sequences \cite{BZ16}, and primes with a given Artin symbol \cite{T14}.

In this paper, we synthesize the Sato--Tate conjecture and the aforementioned work on gaps between primes.
Our main result is too technical to state here, so for now we state a special case for the sake of simplicity.

\begin{theorem}\label{thm:simplified-bounded-gaps}
Let $E/\QQ$ be a non-CM elliptic curve, and let $I\subseteq [-1,1]$ be a closed interval such that $\mu_{ST}(I) \ge 0.36$.
Denote by $\mc P_I$ the set of all primes $p$ satisfying $\cos\theta_p\in I$, and let $p_{I,n}$ be the $n$th prime in $\mc P_I$.
There is a constant $C_I>0$ (independent of $E$) such that for any positive integer $m$, we have that
\[
\liminf_{n \to \infty} (p_{I,n+m} - p_{I,n})\le \exp(C_I m).
\]
\end{theorem}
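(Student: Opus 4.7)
The plan is to adapt the Maynard--Tao multidimensional Selberg sieve to the setting of Sato--Tate primes, using the Bombieri--Vinogradov type theorem for $\mc P_I$ advertised in the abstract as the crucial equidistribution input. Fix $m \geq 1$, set $\delta := \mu_{ST}(I) \geq 0.36$, and let $k$ be a parameter chosen large in terms of $m$ and $\delta$. After a $W$-trick removing small prime obstructions, I would pick an admissible $k$-tuple $\mc H = \{h_1, \ldots, h_k\}$ and Selberg-type weights
\[
w_n := \Bigl( \sum_{d_i \mid n+h_i\ \forall i} \lambda_{\mb d} \Bigr)^2, \qquad n \in [N, 2N],
\]
supported on $\prod_i d_i \leq R = N^{\theta/2 - \eps}$, where $\theta$ is the level of distribution furnished by the Bombieri--Vinogradov result. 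The multipliers $\lambda_{\mb d}$ are parametrized through a smooth cutoff $F$ on the simplex $\{x \in [0,1]^k : \sum x_i \leq 1\}$, exactly as in Maynard's original construction.

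Next I would form the moments
\[
S_1 := \sum_{n \in [N, 2N]} w_n, \qquad S_2 := \sum_{i=1}^k \sum_{n \in [N, 2N]} w_n \one_{n+h_i \in \mc P_I},
\]
and evaluate them asymptotically. The evaluation of $S_1$ follows Maynard verbatim, yielding $S_1 \sim c_1 N (\log R)^k I_k(F)$ for an explicit $I_k(F)$. For $S_2$, the inner sum counts primes in $\mc P_I$ in arithmetic progressions modulo $q \lesssim R^2 \leq N^{\theta - 2\eps}$, and the Bombieri--Vinogradov theorem for $\mc P_I$ controls the average error, yielding $S_2 \sim \delta c_2 N (\log R)^{k+1} J_k(F) / \log N$ after summing over $i$. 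Hence
\[
\frac{S_2}{S_1} \;\sim\; \delta \cdot \frac{\theta}{2} \cdot \frac{k J_k(F)}{I_k(F)},
\]
and whenever this ratio exceeds $m$, pigeonhole forces some $n$ to have at least $m+1$ of the shifts $n+h_i$ in $\mc P_I$, giving $p_{I,n+m} - p_{I,n} \leq \on{diam}(\mc H)$. Using the Maynard--Polymath lower bound $\sup_F k J_k(F)/I_k(F) \gg \log k$, the condition $\delta(\theta/2) M_k > m$ is satisfied once $k \geq \exp(C_1 m / (\delta \theta))$; since admissible $k$-tuples of diameter $O(k \log k)$ exist, the final gap bound becomes $\exp(C_I m)$ with $C_I$ depending only on $\delta$ and $\theta$, hence only on $I$ and not on the choice of $E$.

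The main obstacle is certainly the Bombieri--Vinogradov theorem for $\mc P_I$, uniformly in $E$. Smoothing $\one_I$ and expanding in the basis of Chebyshev polynomials of the second kind (whose evaluation at $\cos\theta_p$ returns traces of symmetric powers of Frobenius) reduces the equidistribution of $\mc P_I$ in arithmetic progressions to equidistribution, on average over moduli $q \leq N^{\theta - \eps}$, of the Dirichlet coefficients of the symmetric power $L$-functions attached to $E$. Establishing a nontrivial level of distribution here seems to require combining the automorphic input from the modularity results underlying \cite{BGHT11} with dispersion- or large-sieve-type arguments, while carefully tracking the tail of the Chebyshev expansion in terms of $\mu_{ST}(I)$. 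The specific threshold $0.36$ likely emerges from balancing this tail against the main term in the Bombieri--Vinogradov bound, so that the unoptimized sieve scheme above still closes uniformly for all such $I$; for intervals violating this threshold, presumably an additional argument (e.g.\ choosing $I$ generic, to handle the "most intervals" assertion) is required.
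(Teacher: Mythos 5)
Your sieve skeleton (Maynard weights after a $W$-trick, the two moments $S_1,S_2$, pigeonhole via $S_2-mS_1>0$, the bound $M_k\gg\log k$, and an admissible tuple of diameter $O(k\log k)$) is essentially the paper's argument in \cref{sec:bounded-gaps}. The genuine gap is the equidistribution input you defer to the last paragraph: you need $S_2$ to carry the density $\delta=\mu_{ST}(I)$, i.e.\ a Bombieri--Vinogradov theorem for the full indicator $\one_I(\cos\theta_p)$. Expanding $\one_I$ (smoothed or not) in the Chebyshev basis produces contributions from $U_\ell$ for \emph{all} $\ell$, and obtaining a nontrivial level of distribution for the $\ell$-th term requires genuine analytic control of $L(s,\Sym^\ell E\otimes\chi)$ (zero-free region, a Siegel--Walfisz analogue, Phragm\'en--Lindel\"of and large-sieve bounds), which is currently available only when $\Sym^\ell E$ is known to be \emph{automorphic}, i.e.\ only for $\ell\le 8$; the potential automorphy of \cite{BGHT11} does not supply it, and no dispersion or large-sieve device is known to bypass it. So the route you sketch cannot be closed unconditionally, and the main-term constant $\mu_{ST}(I)$ you assert for $S_2$ is not attainable. (A smoothed approximation also would not suffice even granting the inputs: since the weights $w_n$ concentrate on special $n$, a valid lower bound for $S_2$ needs a pointwise minorant of $\one_I$, not an $L^1$-approximation.)

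The paper's workaround, which your proposal misses, is to replace $\one_I$ by a fixed polynomial minorant $u_-\le\one_I$ of degree at most $8$ with positive Sato--Tate average $b_0>0$ ($\Sym^8$-minorizability, \cref{def:sym-minorizable}), and to run the sieve on $\sum_i\one_{\PP}(n+h_i)\,u_-(\cos\theta_{n+h_i})$; the required PNT and Bombieri--Vinogradov statements are then proved for each $U_\ell$ with $1\le\ell\le 8$ from known automorphy, giving level of distribution $\theta<1/7$. Consequently the density factor in the sieve inequality is $b_0$, not $\mu_{ST}(I)$ --- these can differ by orders of magnitude (for $I=[-1,-5/6]$, $\mu_{ST}(I)\approx 0.04$ while $b_0\approx 0.001$) --- and your guess about where $0.36$ comes from is wrong: it is not a balance between a Chebyshev tail and the BV main term, but the computationally verified threshold (\cref{lem:minorization-big-measure}) above which every interval admits a degree-$\le 8$ minorant with $b_0>0$. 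Finally, uniformity of the equidistribution estimate in $E$ is neither needed nor claimed; $C_I$ is independent of $E$ simply because $b_0$ and $\theta$ depend only on $I$, while the threshold $N$ may depend on $E$.
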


\begin{remark}
As \cref{thm:bounded-gaps-sato-tate} will show, there exist intervals with $\mu_{ST}(I)<0.36$ for which our result still applies.
In fact, as justified in \cref{lem:half-of-intervals}, our more general theorem holds for over $50.7\%$ of intervals $I \subseteq [-1, 1]$, if the endpoints are sampled according to the Sato--Tate distribution.
\end{remark}

\begin{example*}\label{ex:explicit-bound}
Consider $I = [-1, -5/6]$, which has Sato--Tate measure $\mu_{ST}(I)\approx 0.0398$.
For any non-CM elliptic curve $E/\QQ$, we have
\[
\liminf_{n\rightarrow \infty}(p_{I,n+1} - p_{I,n}) \leq 10^{5992}.
\]
This explicit bound is computed using \cref{thm:bounded-gaps-sato-tate} in \cref{ex:explicit-gap}.
\end{example*}
In the spirit of the Green--Tao theorem, which states that subsets of the primes with positive upper density contain arbitrarily long arithmetic progressions, we can prove a refinement of \cref{thm:simplified-bounded-gaps} by adapting methods of Pintz \cite{Pi10, Pi16, Pi17} and Vatwani and Wong \cite{VW17}.
Again, we shall only state a special case of our result; a more general version is given in \cref{thm:green-tao-sato-tate}.
To state this result, recall that a set $\mc{H}= \{h_1,\ldots,h_k\}$ of nonnegative integers is \emph{admissible} if, for any prime $p$, there exists an integer $n$ such that $p\nmid n+h_i$ for all $i=1,\ldots,k$.

\begin{theorem}\label{thm:simplified-green-tao}
If $E/\QQ$ is a non-CM elliptic curve, and $I \subseteq [-1,1]$ is a closed interval such that $\mu_{ST}(I)\ge 0.36$, then there is a constant $C_I > 0$ (independent of $E$) such that for all $m\ge 1$ the following holds. Given any admissible set $\mc{H}=\{h_1,\ldots,h_k\}$ of size $k\ge\exp(C_I m)$, there exists an $(m+1)$-element subset $\{h_1',\ldots,h_{m+1}'\}$ of $\mc{H}$ such that there are arbitrarily long arithmetic progressions in the set
\[\{n\in\NN: n+h_i'\in\mc{P}_I\emph{ for all } 1\le i\le m+1\}.\]
\end{theorem}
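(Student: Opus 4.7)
The plan is to follow the Pintz--Vatwani--Wong framework \cite{Pi10,Pi16,Pi17,VW17}, which upgrades a Maynard--Tao bounded gaps theorem to a Green--Tao type statement for the same set of primes. Our arithmetic input is the Bombieri--Vinogradov theorem for $\mc{P}_I$ that the paper establishes and that already underlies \cref{thm:simplified-bounded-gaps}; this plays the role of the classical Bombieri--Vinogradov theorem in \cite{VW17}. Throughout, $N\to\infty$ is the asymptotic parameter.

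First, I would fix an admissible set $\mc{H}$ with $|\mc{H}|=k\ge \exp(C_I m)$ and run the Maynard--Tao sieve attached to $\mc{H}$, with weights $w_n$ supported on $n\in[N,2N]$ lying in a fixed admissible residue class modulo $W:=\prod_{p\le D_0}p$. The same main-term/error-term analysis underlying \cref{thm:simplified-bounded-gaps}---Selberg-sieve asymptotics for the main term, and Bombieri--Vinogradov for $\mc{P}_I$ to absorb the error---produces some $\eta>0$ for which
$$\sum_{n\in[N,2N]} w_n \sum_{i=1}^{k}\one_{\mc{P}_I}(n+h_i)\;>\;(m+\eta)\sum_{n\in[N,2N]}w_n.$$
Pigeonholing over the $\binom{k}{m+1}$ possible $(m+1)$-subsets of $\mc{H}$ yields, for each large $N$, a subset $\mc{H}'_N=\{h'_1,\ldots,h'_{m+1}\}\subseteq \mc{H}$ and a set $A_N\subseteq[N,2N]$ with $\sum_{n\in A_N}w_n\gg_{k,m}\sum_n w_n$, such that $n+h'_j\in\mc{P}_I$ for every $n\in A_N$ and every $1\le j\le m+1$. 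Since there are only finitely many candidate subsets, I would pass to a subsequence of $N$ along which $\mc{H}'_N$ stabilizes to a fixed $\mc{H}'$.

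Next, following Pintz \cite{Pi10,Pi17}, I would convert the resulting sieve-weighted density on $\bigcup_N A_N$ into arbitrarily long arithmetic progressions via the relative Szemer\'edi theorem of Green--Tao, in the transference formulation of Conlon--Fox--Zhao. Concretely, I would build a pseudorandom majorant $\nu$ on (the cyclic model of) $[N,2N]$ that concentrates on integers $n$ with $n+h'_j\in\mc{P}_I$ for every $j$, so that $\one_{A_N}\nu$ has density $\gg 1$ relative to $\nu$. Applying the relative Szemer\'edi theorem to $\one_{A_N}\nu$ then produces arbitrarily long APs inside $\{n:n+h'_j\in\mc{P}_I\text{ for all }j\}$, which is exactly the conclusion of the theorem.

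The principal obstacle is the verification of the linear forms and correlation conditions for $\nu$. Unpacking the construction of $\nu$ from truncated divisor sums, these conditions reduce to asymptotic evaluation of correlations of the shape $\sum_n \prod_\ell \one_{\mc{P}_I}(L_\ell(n))$ for short tuples of linear forms $L_\ell$, averaged over products of small moduli. As in the Chebotarev analogue handled by Vatwani--Wong \cite{VW17}, these should follow by combining the Bombieri--Vinogradov theorem for $\mc{P}_I$ established in this paper with the positive Sato--Tate density $\mu_{ST}(I)>0$, which guarantees that $\mc{P}_I$ has positive relative density in the primes; once the arithmetic input is in place, the combinatorial manipulations are standard and the proof goes through with only cosmetic changes from the Chebotarev case.
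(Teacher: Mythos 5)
Your overall strategy (transference via a relative Szemer\'edi theorem, with a pigeonhole to fix the $(m+1)$-subset) differs from the paper, which instead restricts the Maynard sum to $n$ with $P^-\bigl(\prod_{i=1}^k(n+h_i)\bigr)\ge n^{c_1(k)}$, proves the unweighted density bound $\#\{n\le x:n\in S_{\mc P_I}(\mc H)\}\gg x(\log x)^{-k}$ (\cref{lem:bound-on-S1pj}, \cref{prop:average-Pintz-condition}, \cref{prop:Pintz-condition}), and then invokes Pintz's \cref{thm:Pintz} as a black box, fixing the subset afterwards with van der Waerden. The difference is not cosmetic: your route has a genuine gap at its central step. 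You claim that the linear forms and correlation conditions for your majorant $\nu$ ``reduce to asymptotic evaluation of correlations of the shape $\sum_n\prod_\ell \one_{\mc P_I}(L_\ell(n))$'' and that these follow from the Sato--Tate Bombieri--Vinogradov theorem together with $\mu_{ST}(I)>0$. They do not. Correlations of the prime (or Sato--Tate prime) indicator along several linear forms are Hardy--Littlewood-type problems --- already $\sum_{n\le x}\one_{\mc P_I}(n)\one_{\mc P_I}(n+2)$ is a twin-prime-type count --- and \cref{conj:bombieri-sato-tate-final} only gives equidistribution of a \emph{single} form in residue classes on average over moduli. In Green--Tao, Pintz, and Vatwani--Wong the pseudorandomness conditions are verified for a truncated-divisor-sum majorant supported on almost-prime configurations, where the correlations are elementary sieve computations; Bombieri--Vinogradov enters only in the Maynard-type main count, never in the pseudorandomness verification. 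A majorant that genuinely ``concentrates on $n$ with $n+h_j'\in\mc P_I$ for all $j$'' at the correct normalization is precisely what no one knows how to build, and Pintz's theorem exists to avoid having to build it.

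A second, related omission: you never impose the almost-primality condition $P^-\bigl(\prod_i(n+h_i)\bigr)\ge n^{c_1(k)}$, and without it your pigeonhole output is too weak. From $\sum_{n\in A_N}w_n\gg\sum_n w_n$ you cannot conclude $|A_N|\gg N(\log N)^{-k}$ (or positive density with respect to any usable pseudorandom measure), because individual Maynard weights $w_n$ can greatly exceed the average when some $n+h_i$ has many prime factors. The paper's \cref{prop:Pintz-condition} gets the cardinality bound only because, on the almost-prime-restricted set, $w_n\ll\lambda_{\max}^2\ll(\log R)^{2k}$; and \cref{lem:bound-on-S1pj} and \cref{prop:average-Pintz-condition} are exactly what show that imposing this restriction costs only a negligible part of $\widehat{S_2}-mS_1$. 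To repair your argument you would need to reintroduce this restriction, at which point you are essentially reconstructing the paper's proof of Pintz's hypothesis and might as well cite \cref{thm:Pintz} directly (your subsequence-stabilization trick for fixing $\mc H'$ is then a workable substitute for the paper's van der Waerden step, but it is the only salvageable novelty).
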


\begin{remark}
By a slight alteration of our argument, one can show that \cref{thm:simplified-bounded-gaps} and \cref{thm:simplified-green-tao} hold in the more general setting when the traces come from a non-CM holomorphic newform of positive even integer weight.
\end{remark}

\subsection{Overview of argument}\label{sec:overview-of-argument}
The approach of Maynard to proving bounded gaps between primes involves the careful estimation of weighted counts of primes.
In the Sato--Tate setting, there is a further constraint: we require that the normalized traces of the primes lie in $I$. 
To control the indicator function $\one_I$, one might first note that the set of Chebyshev polynomials of second kind $\{U_\ell\}_{\ell\ge 0}$ is an orthonormal basis of $L^2([-1,1], \mu_{ST})$. Thus, considering the Fourier expansion of $\one_I$ with respect to this basis, one might analyze the basis elements individually.
However, this strategy faces substantial difficulties due to a lack of understanding of the symmetric power $L$-functions $L(s,\Sym^\ell E)$ associated to $E$.

As we will see in \cref{sec:symmetric-powers}, we need to know that these $L$-functions are automorphic in order to carry through the estimates. 
Currently, automorphy has only been proven for $L(s,\Sym^\ell E)$ when $\ell\leq 8$, although it is conjectured to hold for all $\ell\in\NN$. The case of $\ell = 1$ is precisely the modularity theorem established by the combined work of Wiles \cite{W95}; Taylor and Wiles \cite{TW95}; Diamond \cite{Di96}; Conrad, Diamond, and Taylor \cite{CDT99}; and Breuil, Conrad, Diamond, and Taylor \cite{BCDT01}. The values $2\le\ell\le 8$ follow from automorphy lifting theorems for symmetric powers of cuspidal automorphic representations of $\GL_2$: $\ell = 2$ is due to Gelbart and Jacquet \cite{GJ78}, $\ell = 3$ to Kim and Shahidi \cite{KS02}, $\ell = 4$ to Kim \cite{K03}, and $\ell\in\{5,6,7,8\}$ to Clozel and Thorne \cite{CT15, CT17}. (In contrast, \emph{potential} automorphy is known for all $\ell$ \cite{BGHT11}.)

Accordingly, for an unconditional result, we can only afford to use an approximation of $\one_I$ by polynomials of degree up to $8$. We will choose this approximation to be a minorant $u_- \le \one_I$.
The idea of using such a polynomial minorant to obtain unconditional results appears in \cite{LT18}.
Because of these considerations, we establish our results unconditionally precisely for those intervals $I$ such that $\one_I$ is minorizable by a polynomial of degree at most $8$ with positive average against $\mu_{ST}$.
(\cref{app:minorization} gives a more technical discussion of minorizing indicator functions $\mathbbm{1}_{I}$ by polynomials with these constraints.)
Assuming automorphy for all $\ell$, our results hold for all intervals $I \subseteq [-1, 1]$.

We now explain the structure of our paper.
In \cref{sec:PNT-BV-sato-tate}, we first state the number-theoretic results that are required to adapt the general frameworks of Maynard and Pintz to the Sato--Tate setting; namely, we need versions of the prime number theorem and the Bombieri--Vinogradov theorem for Sato--Tate primes.
We will assume these inputs in \cref{sec:bounded-gaps} to establish a more general version of \cref{thm:simplified-bounded-gaps}, our bounded gaps result.
Similarly, in \cref{sec:green-tao} we prove a more general version of Theorem \ref{thm:simplified-green-tao}, the synthesis of our bounded-gap result with the Green--Tao theorem, assuming the results of \cref{sec:PNT-BV-sato-tate}.

All the subsequent sections are dedicated to proving the theorems in \cref{sec:PNT-BV-sato-tate}. 
Towards this, in \cref{sec:symmetric-powers} we begin our technical discussion of symmetric power $L$-functions.
\cref{sec:parameter-bounds} discusses bounds on coefficients and values of $L$-functions, estimates required for proving our analogues of the Siegel--Walfisz and Bombieri--Vinogradov theorems. \cref{sec:zero-free-siegel} establishes the Siegel--Walfisz theorem in the Sato--Tate setting, using a zero-free region and an analogue of Siegel's theorem for symmetric power $L$-functions, which follows from the work of Molteni \cite{Mo99}.
Finally, in \cref{sec:bombieri} we complete the proof of our Bombieri--Vinogradov type estimate, adapting methods of Murty and Murty \cite{MM87}.

\subsection{Conventions}
Throughout this paper we shall use the following notation. We will use the variable $p$ to index primes in sums and products. Given two functions $f,g$, we say that $f=O(g)$, or $f\ll g$, if there exists a constant $C$ such that $|f|\leq C|g|$; the subscript versions $O_a$, $\ll_a$ imply that the constant may depend on $a$. Similarly, we may use $C_a$, $c_a$ or $c'_a$ to denote constants that depend on $a$. For a positive integer $N$, its \textit{radical} is defined as $\rad(N) := \prod_{p\mid N}p$, and $P^-(N)$ denotes the smallest prime divisor of $N$. As usual, $\mu(\cdot)$ is the M\"obius function, $\varphi(\cdot)$ is the Euler totient function, and $\pi(\cdot)$ is the prime counting function. Additionally, $\one_S$ denotes the indicator function of a set $S$.

\section{Number-theoretic inputs to bounded gaps}\label{sec:PNT-BV-sato-tate}

Let $E/\QQ$ be a fixed elliptic curve without CM. For each prime $p$, $E$ has a trace of Frobenius $2\sqrt{p}\cos\theta_p$, where $\theta_p\in [0, \pi]$. 
We consider primes $p$ with $\cos\theta_p \in I$, where $I$ is a fixed, closed subinterval of $[-1, 1]$; we denote this set of primes by $\mc{P}_I$.

As discussed in \cref{sec:overview-of-argument}, we consider a minorizing polynomial $u_-(t)\le\one_I(t)$ of bounded degree, following \cite{LT18}. This will suffice for our applications on bounded gaps, as we will see in our adaptation of the Maynard framework in \cref{sec:bounded-gaps}. 
We shall expand $u_-$ with respect to the basis of Chebyshev polynomials of the second kind, which are defined as
\begin{equation} \label{eq:chebyshevdefn}
U_\ell(\cos \theta) := \frac{\sin ((\ell + 1)\theta)}{\sin \theta}.
\end{equation}
We use these polynomials due to their relationship to symmetric power $L$-functions, detailed in \cref{sec:symmetric-powers}; specifically, $U_\ell(\cos\theta_p)$ is the coefficient of $p^{-s}$ in $L(s,\Sym^\ell E)$ for $p$ not dividing the conductor of $E$.
One can easily show that $U_\ell$ is a polynomial of degree $\ell$, and that $|U_\ell(\cos \theta)| \leq \ell + 1$. Also, the Chebyshev polynomials form an orthonormal family with respect to the Sato--Tate measure; in particular, we have
\[
\int_{-1}^1 U_\ell(t) \mu_{ST}(dt) =
\begin{cases}
1, \qquad \text{$\ell$ = 0,} \\
0, \qquad \text{otherwise,}
\end{cases}
\]
so that integration against $\mu_{ST}$ picks up the coefficient of $U_0$ in the representation of a polynomial in a basis of $U_\ell$'s. Now if our minorizing polynomial $u_-$ has degree $\ell_{\max}$, we can write it as
\begin{equation}\label{eq:minorizingpolydefn}
    u_-(\cos\theta_p) = \sum_{\ell=0}^{\ell_{\max}} b_\ell U_{\ell}(\cos\theta_p),
\end{equation}
where $b_i\in\RR$.
The Sato--Tate average $b_0$ of $u_-$ will show up in the factor of the main terms in our analogues of the prime number theorem and the Bombieri--Vinogradov theorem, and ultimately in the choice of a positive parameter; therefore we require that $b_0 > 0$.
This condition is central to our argument, hence we formalize it in the following definition.
\begin{definition}\label{def:sym-minorizable}
For $\ell\geq 0$, we say\footnote{This notation is introduced in \cite{LT18}.} that a closed interval $I\subseteq [-1,1]$ is \emph{$\Sym^\ell$-minorizable} if there exists some polynomial $\sum_{j=0}^{\ell} b_jU_j$ of degree at most $\ell$ with $b_0>0$ and $b_j\in\RR$ which lower bounds $\one_I$ in the range $[-1,1]$.
\end{definition}
To extract estimates involving $U_\ell(\cos\theta_p)$ from its corresponding $L$-function for $\ell\le\ell_{\max}$, we make the following assumption, which justifies \cref{def:sym-minorizable}.

\begin{assumption}\label{assumption-symmetric-powers}
The symmetric power $L$-functions $L(s, \Sym^\ell E)$ of a non-CM elliptic curve $E/\QQ$ are automorphic for $0 \leq \ell \leq \ell_{\max}$.
\end{assumption}
\begin{remark}
Recall from \cref{sec:overview-of-argument} that \cref{assumption-symmetric-powers} has been proven for $\ell_{\max} = 8$.
\end{remark}

We shall now state the necessary number-theoretic results with weights given by the Chebyshev polynomials $U_\ell$, and then take linear combinations to obtain analogous results about $u_-$. The following analogue of the prime number for Chebyshev polynomials of the second kind is a consequence of \cite[Theorem~5.13]{IK04}, as detailed in \cref{sec:zero-free-siegel}.

\begin{theorem}\label{thm:Ul-prime-number-theorem} 
Let $E$ be a non-CM elliptic curve and $\ell\ge 1$.
Assuming that $L(s,\Sym^\ell E)$ is automorphic, we have
\[
\sum_{\substack{p \leq x}} U_\ell(\cos \theta_p) = O_{E, \ell}\Bigl(x \exp \Bigl(-c_{E, \ell} \sqrt{\log x}\Bigr)\Bigr).
\]
\end{theorem}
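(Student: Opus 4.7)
The plan is to deduce this from standard prime number theorem machinery for automorphic $L$-functions, applied to $L(s, \Sym^\ell E)$. The starting point is the Euler product expansion recalled in the excerpt: for primes $p$ of good reduction, the Dirichlet coefficient of $L(s, \Sym^\ell E)$ at $p$ is exactly $U_\ell(\cos\theta_p)$. Thus, up to the finitely many primes dividing the conductor of $E$ (which contribute $O_E(1)$ to the sum), the quantity we want to bound is a partial sum of the Dirichlet coefficients of $L(s, \Sym^\ell E)$ over primes.

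First I would invoke the hypothesis that $L(s, \Sym^\ell E)$ is automorphic, together with the fact that $E$ is non-CM and $\ell \geq 1$, to conclude that this $L$-function is entire, cuspidal, and satisfies a standard functional equation with archimedean $\Gamma$-factors. In particular, by Jacquet--Shalika, $L(s, \Sym^\ell E)$ has no zeros on the line $\Re s = 1$. Combined with the standard convexity bounds along $\Re s = 1$ for cuspidal automorphic $L$-functions on $\GL_{\ell+1}$, these are exactly the hypotheses needed to invoke \cite[Theorem~5.13]{IK04}, which yields the Chebyshev-type estimate
\[
\sum_{n \le x} \Lambda(n)\, \lambda_\ell(n) = O_{E, \ell}\bigl(x \exp(-c_{E, \ell} \sqrt{\log x})\bigr),
\]
where $\lambda_\ell(n)$ denotes the $n$-th Dirichlet coefficient of $L(s, \Sym^\ell E)$ and $\Lambda$ is the von Mangoldt function. (More precisely, the theorem is stated for the logarithmic derivative of $L$; the relevant zero-free region of classical de la Vall\'ee Poussin type follows from the analytic properties above together with a Rankin--Selberg style bound on $L(s, \Sym^\ell E \times \overline{\Sym^\ell E})$, which is available in the automorphic setting.)

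Next I would pass from the von Mangoldt weighted sum over integers to the unweighted sum over primes. The prime power contributions with $n = p^k$, $k \geq 2$ are negligible: using the bound $|\lambda_\ell(p^k)| \leq \binom{k + \ell}{\ell}$ which follows from $|U_\ell(\cos\theta)| \leq \ell+1$ applied to each local factor, these terms total $O_{E,\ell}(\sqrt{x})$. To remove the $\log p$ weight and pass to unweighted primes, I would apply partial summation in the standard way, which introduces at most a $\log x$ factor that is absorbed into the $\exp(-c_{E,\ell}\sqrt{\log x})$ by slightly decreasing the constant. Finally, the at most $O_E(1)$ primes of bad reduction each contribute $O_\ell(1)$ to $\sum_p U_\ell(\cos\theta_p)$ and can be absorbed into the error term.

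The step I expect to require the most care is verifying that \cite[Theorem~5.13]{IK04} applies cleanly in this setting, i.e.\ checking that our cuspidal $L$-function on $\GL_{\ell+1}$ truly satisfies the axiomatic hypotheses there (analytic conductor growth, functional equation shape, and non-vanishing on $\Re s = 1$); the rest is bookkeeping. No quantitative control on the constants $c_{E,\ell}$ is required by the statement, so a soft appeal to the Jacquet--Shalika non-vanishing is sufficient, and we need not enter into explicit zero-free region computations.
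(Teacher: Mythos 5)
Your overall route is the same as the paper's: the paper proves a Siegel--Walfisz theorem for $\psi(x,\Sym^\ell E\otimes\chi)$ via a zero-free region and \cite[Theorem~5.13]{IK04}, then specializes to the trivial character and does exactly the bookkeeping you describe (discard primes $p\mid N_E$, note the prime-power terms $m\ge 2$ are negligible, remove the $\log p$ weight by partial summation). So the skeleton is fine.

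There is, however, one genuine problem, concentrated in your last paragraph. The claim that ``a soft appeal to the Jacquet--Shalika non-vanishing is sufficient, and we need not enter into explicit zero-free region computations'' is false for the stated error term. Non-vanishing of $L(s,\Sym^\ell E)$ on the line $\Re s=1$ only yields $\sum_{p\le x}U_\ell(\cos\theta_p)=o(x)$ by a Tauberian argument; to obtain the saving $x\exp(-c_{E,\ell}\sqrt{\log x})$ you need a quantitative de la Vall\'ee Poussin-type zero-free region of width $\gg 1/\log(|t|+3)$, and this is precisely a hypothesis of \cite[Theorem~5.13]{IK04}, not a consequence of it. Your earlier parenthetical (zero-free region via a Rankin--Selberg bound for $L(s,\Sym^\ell E\times\overline{\Sym^\ell E})$, available by automorphy) is the correct ingredient and should be retained as an essential step rather than waved away; the paper's source for this is \cite[Theorem~A.1]{HB19}, applied in \cref{lem:siegel-zero-free}. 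Relatedly, since $\Sym^\ell E$ is self-dual, the zero-free region and the conclusion of \cite[Theorem~5.13]{IK04} come with a possible real exceptional zero, i.e.\ a term $-x^{\beta}/\beta$, which your write-up never addresses. For this particular statement it is easy to dispose of: $E$ and $\ell$ are fixed and $L(1,\Sym^\ell E)\neq 0$, so any real zero sits at a fixed $\beta<1$ and $x^{\beta}$ is absorbed into $x\exp(-c_{E,\ell}\sqrt{\log x})$; the paper instead invokes Molteni's bound $L(1,\Sym^\ell E\otimes\chi)\gg_{\ell,E,\eps}q^{-\eps}$ because it needs uniformity in the modulus for the later Bombieri--Vinogradov argument, which you do not need here. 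With those two points repaired, the rest of your argument (coefficient bounds at prime powers, partial summation, bad primes) matches the paper and is correct.
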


This implies the following prime number theorem for $u_-$.
\begin{corollary}\label{cor:prime-number-theorem-minorizing}
Under \cref{assumption-symmetric-powers}, we have that
\[
\sum_{N< p \le 2N} u_-(\cos\theta_p) = b_0 (\pi(2N) - \pi(N)) + O_{E, \ell_{\max}, u_-}\Bigl(N
\exp\Bigl( -c_{E, \ell_{\max}} \sqrt{\log N}\Bigr)\Bigr).
\]
\end{corollary}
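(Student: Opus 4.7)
The proof proposal is a direct linear-combination argument built on top of \cref{thm:Ul-prime-number-theorem}. The plan is to expand $u_-$ in the Chebyshev basis via \cref{eq:minorizingpolydefn}, interchange the finite sum over $\ell$ with the sum over primes $N<p\le 2N$, and treat the $\ell=0$ and $\ell\ge 1$ contributions separately.

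For the $\ell=0$ term, I would use $U_0\equiv 1$ to obtain $b_0\sum_{N<p\le 2N}1 = b_0(\pi(2N)-\pi(N))$, which is exactly the stated main term. For each $1\le \ell\le \ell_{\max}$, I would write
\[
\sum_{N<p\le 2N}U_\ell(\cos\theta_p) = \sum_{p\le 2N}U_\ell(\cos\theta_p) - \sum_{p\le N}U_\ell(\cos\theta_p),
\]
and apply \cref{thm:Ul-prime-number-theorem} to each of the two pieces. This is legal because \cref{assumption-symmetric-powers} grants automorphy of $L(s,\Sym^\ell E)$ for all $1\le \ell\le\ell_{\max}$, so the hypotheses of \cref{thm:Ul-prime-number-theorem} are met. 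Passing from the scale $2N$ to $N$ in the exponential factor $\exp(-c_{E,\ell}\sqrt{\log(\cdot)})$ costs only a constant, absorbed into the implicit constant.

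Finally, I would collect the error by the triangle inequality, weighting by $|b_\ell|$ and summing over the finitely many values $1\le \ell\le \ell_{\max}$:
\[
\left|\sum_{N<p\le 2N}u_-(\cos\theta_p) - b_0(\pi(2N)-\pi(N))\right| \le \sum_{\ell=1}^{\ell_{\max}}|b_\ell|\cdot O_{E,\ell}\!\left(N\exp\bigl(-c_{E,\ell}\sqrt{\log N}\bigr)\right).
\]
Taking $c_{E,\ell_{\max}}:=\min_{1\le \ell\le \ell_{\max}}c_{E,\ell}>0$ and absorbing the fixed coefficients $b_1,\ldots,b_{\ell_{\max}}$ (which depend only on $u_-$) into the implicit constant gives the claimed bound. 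There is no real obstacle here: the statement is a routine deduction, and the only points requiring a touch of care are uniformizing the constant $c_{E,\ell_{\max}}$ across the finitely many $\ell$'s and confirming that the automorphy hypothesis covers every $\ell$ that appears in the expansion of $u_-$, both of which are immediate from the definitions.
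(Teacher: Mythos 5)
Your proposal is correct and follows essentially the same route as the paper: expand $u_-$ via \cref{eq:minorizingpolydefn}, let the $U_0$ coefficient produce the main term $b_0(\pi(2N)-\pi(N))$, and bound the $\ell\ge 1$ contributions by \cref{thm:Ul-prime-number-theorem}, uniformizing the constants $c_{E,\ell}$ over the finitely many $\ell$ and absorbing the $|b_\ell|$ into the implied constant. The paper's proof is just a more compressed version of exactly this argument, so there is nothing further to add.
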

\begin{proof}[Proof that \cref{thm:Ul-prime-number-theorem} implies \cref{cor:prime-number-theorem-minorizing}]
By \cref{eq:minorizingpolydefn} we have
\[\sum_{N < p\le 2N}u_-(\cos\theta_p)
=b_0(\pi(2N) - \pi(N))+O_{E, \ell_{\max}}\Biggl(N \sum_{\ell=1}^{\ell_{\max}}|b_\ell|\exp\Bigl(-c_{E, \ell}\sqrt{\log{ 2N}}\Bigr)\Biggr),\]
which gives the desired estimate.
\end{proof}

Next, we state an analogue of the Bombieri--Vinogradov theorem for the Chebyshev polynomials of second kind, to be proven in \cref{sec:bombieri}.
\begin{theorem}\label{conj:bombieri-sato-tate-final}
Let $E$ be a non-CM elliptic curve and $\ell \geq 1$. Assume that $L(s,\Sym^\ell E)$ is automorphic.
Then for any $0 < \theta < 1/{\max(2, \ell-1)}$, and for all $B > 0$, we have
\[\sum_{\substack{q\le x^\theta}}\sup_{\substack{(a, q) = 1\\y\le x}}\left|\sum_{\substack{p\le y\\p\equiv a\pmod{q}}}U_\ell(\cos\theta_p)\right|\ll_{B, E, \ell}x(\log x)^{-B}.\]
\end{theorem}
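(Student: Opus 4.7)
My plan is to adapt the Murty--Murty approach \cite{MM87} from the $GL(2)$ cusp-form setting to degree-$(\ell+1)$ symmetric power $L$-functions. The starting point is orthogonality of Dirichlet characters modulo $q$:
\[
\sum_{\substack{p\le y\\ p\equiv a\pmod{q}}}U_\ell(\cos\theta_p)
=\frac{1}{\varphi(q)}\sum_{\chi\bmod q}\bar\chi(a)\sum_{p\le y}U_\ell(\cos\theta_p)\chi(p),
\]
so taking the supremum over $(a,q)=1$ and $y\le x$ reduces matters to uniform bounds on the twisted character sums on the right. The principal character contribution is handled by \cref{thm:Ul-prime-number-theorem}, which has no main term for $\ell\ge 1$. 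A standard reduction lets me replace induced characters by primitive $\chi^*\bmod q^*\mid q$, at the cost of a harmless logarithmic factor in $q$.

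For each primitive non-principal $\chi\bmod q$, under \cref{assumption-symmetric-powers} the twist $L(s,\Sym^\ell E\otimes\chi)$ is an entire cuspidal automorphic $L$-function of degree $\ell+1$ with analytic conductor $\ll_{E,\ell}q^{\ell+1}$, whose $p$-th Dirichlet coefficient agrees with $U_\ell(\cos\theta_p)\chi(p)$ for $p\nmid qN$. Applying Perron's formula and shifting the contour past the critical line yields an explicit-formula identity of the shape
\[
\sum_{p\le y}U_\ell(\cos\theta_p)\chi(p)\log p = -\sum_\rho \frac{y^\rho}{\rho} + \text{admissible error},
\]
where $\rho$ ranges over non-trivial zeros of $L(s,\Sym^\ell E\otimes\chi)$. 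Ramified primes, prime-power contributions, and the Perron truncation error are absorbed into the error term using the bound $|U_\ell(\cos\theta_p)|\le\ell+1$ together with the Rankin--Selberg-type second-moment estimates from \cref{sec:parameter-bounds}. In the small-modulus regime $q\le(\log x)^C$ with $C=C(B)$ large, the Siegel--Walfisz-type theorem from \cref{sec:zero-free-siegel}, combined with the standard zero-free region for $L(s,\Sym^\ell E\otimes\chi)$, yields the required savings of $x(\log x)^{-B}$ directly.

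The heart of the argument is controlling the range $(\log x)^C<q\le x^\theta$. Here I would combine Gallagher's large sieve over primitive Dirichlet characters with a log-free zero-density estimate of the form
\[
\sum_{q\le Q}\,\sum_{\chi\bmod q}^{*} N(\sigma,T;\Sym^\ell E\otimes\chi) \ll (Q^{\ell+1}T)^{A(1-\sigma)}(\log QT)^{O(1)},
\]
for a suitable absolute $A$, obtainable by the Halász--Montgomery method fed by the Rankin--Selberg mean-value bounds from \cref{sec:parameter-bounds}. Summing $|y^\rho/\rho|$ over relevant zeros, weighted by $1/\varphi(q)$, and balancing against the trivial bound produces the claimed threshold $\theta<1/\max(2,\ell-1)$: the exponent $2$ is the classical Dirichlet large-sieve limit, while $\ell-1$ encodes the convexity bound on the critical line for a degree-$(\ell+1)$ $L$-function. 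I expect the main obstacle to be obtaining this zero-density estimate with sharp enough uniform dependence on the conductor $q^{\ell+1}$; this demands careful bookkeeping in the approximate functional equation and in the Cauchy--Schwarz step across primitive characters, and it is precisely where the exponent $\max(2,\ell-1)$ gets pinned down.
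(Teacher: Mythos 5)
Your reduction via orthogonality of characters, the treatment of the principal character by \cref{thm:Ul-prime-number-theorem}, and the use of a Siegel--Walfisz input for small moduli are all fine, and your overall architecture (explicit formula plus zero-density over the family) is a legitimate classical route to Bombieri--Vinogradov type statements. But the heart of your argument rests on a log-free zero-density estimate
\[
\sum_{q\le Q}\;{\sum_{\chi \bmod q}}^{*}\, N(\sigma,T;\Sym^\ell E\otimes\chi)\ \ll\ (Q^{\ell+1}T)^{A(1-\sigma)}(\log QT)^{O(1)},
\]
and this is precisely the missing ingredient: it is not proved in your proposal, it is not supplied by \cref{sec:parameter-bounds} (that section contains only coefficient bounds of divisor type, Norton's lemma, and Phragm\'en--Lindel\"of convexity bounds for $L$ and $L'$ --- there are no Rankin--Selberg second-moment estimates there, so the phrase ``fed by the Rankin--Selberg mean-value bounds from \cref{sec:parameter-bounds}'' refers to inputs that do not exist in the paper), and it is not available off the shelf for degree-$(\ell+1)$ twists with the required uniformity in the aggregated conductor $q^{\ell+1}$. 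Establishing such a density theorem for the family $\{\Sym^\ell E\otimes\chi\}$ would itself be a substantial piece of work, arguably harder than the theorem you are trying to prove, so as written the proposal defers the entire difficulty to an unproven hypothesis. Relatedly, your explanation of where the threshold $1/\max(2,\ell-1)$ comes from (``the convexity bound on the critical line'') is only a heuristic; you never carry out the balancing that pins down this exponent from your hypothetical density bound.

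For contrast, the paper avoids zeros altogether: after reducing to log-weighted Chebyshev functions $\psi_{r,\ell,E}$ and passing to primitive characters, it expresses $\psi_{r,\ell,E}(x,\chi)$ by Mellin inversion against $-L_\unr'/L_\unr$, applies a Vaughan/Gallagher-type identity $-L_\unr'/L_\unr = G_z(1-L_\unr M_z)+F_z(1-L_\unr M_z)-L_\unr'M_z$, bounds the Dirichlet-polynomial pieces with the large sieve, and controls the critical-line mean squares of $L_\unr$ and $L_\unr'$ over the family by Ramachandra's method (approximate functional equation with exponential damping, functional-equation factor bounds, and \cref{lem:L-L'-sizes}), as in \cref{subsec:gallagher,subsec:ramachandra}. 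There the exponent $1/\max(2,\ell-1)$ emerges concretely from the conductor bound $A_\chi\le Aq^{\ell+1}$: the approximate functional equation has length about $(AQ^{\ell+1}T^{\ell+1})^{1/2}$, producing the term $x^{1/2}A^{1/2}Q^{(\ell+1+\eps)/2}$ whose comparison with $xQ$ forces $Q\le x^{1/(\ell-1)-\eps}$, alongside the classical large-sieve limit $Q\le x^{1/2-\eps}$. If you want to salvage your route, you would need to either prove the density estimate you assume or switch to a mean-value argument of this kind, which requires only the convexity and conductor bounds already established in the paper.
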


This allows us to deduce a similar estimate for minorizing functions $u_-$ of suitable intervals.

\begin{corollary}\label{cor:bombieri-vinogradov-minorizing}
 Under \cref{assumption-symmetric-powers}, we have that for any $0 < \theta <1/{\max(2, \ell_{\max}-1)}$ and for all $B>0$, we have
\[
\sum_{q\le x^{\theta}} \sup_{(a,q)=1} \left|\sum_{\substack{x < p \leq 2x\\ p \equiv a\pmod{q}}} u_-(\cos \theta_p) - b_0 \frac{\pi(2x) - \pi(x)}{\varphi(q)}\right|\ll_{B, E, \ell_{\max}, u_-}x(\log x)^{-B}.
\]
\end{corollary}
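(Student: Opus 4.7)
The plan is to mimic the deduction of \cref{cor:prime-number-theorem-minorizing} from \cref{thm:Ul-prime-number-theorem}. Using \cref{eq:minorizingpolydefn} to expand $u_-$ in the Chebyshev basis and recalling that $U_0 \equiv 1$, the quantity inside the absolute value in the corollary decomposes as
\[
\sum_{\substack{x < p \le 2x \\ p \equiv a \pmod{q}}} u_-(\cos\theta_p) - b_0\,\frac{\pi(2x)-\pi(x)}{\varphi(q)}
= b_0\,\Delta_0(x,q,a) + \sum_{\ell=1}^{\ell_{\max}} b_\ell\, \Delta_\ell(x,q,a),
\]
where $\Delta_0(x,q,a) := \pi(2x;q,a) - \pi(x;q,a) - (\pi(2x) - \pi(x))/\varphi(q)$ and, for $\ell \ge 1$, $\Delta_\ell(x,q,a) := \sum_{x < p \le 2x,\, p \equiv a \pmod{q}} U_\ell(\cos\theta_p)$. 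Applying the triangle inequality before summing in $q$ and taking the supremum over $(a,q)=1$ reduces the corollary to bounding each of the quantities $T_\ell := \sum_{q \le x^\theta} \sup_{(a,q)=1} |\Delta_\ell(x,q,a)|$ by $\ll_{B,E,\ell} x(\log x)^{-B}$ for any $B > 0$.

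For the $\ell = 0$ term, I would use the hypothesis $\theta < 1/\max(2, \ell_{\max}-1) \le 1/2$ to invoke the classical Bombieri--Vinogradov theorem. Writing $\Delta_0$ as the telescoping difference $[\pi(2x;q,a) - \pi(2x)/\varphi(q)] - [\pi(x;q,a) - \pi(x)/\varphi(q)]$ and bounding each bracket by $\sup_{y \le 2x} |\pi(y;q,a) - \pi(y)/\varphi(q)|$ allows the $q$-sum to be controlled via the classical estimate (applied at level $2x$, which is permissible since $\theta < 1/2$), yielding $T_0 \ll_B x(\log x)^{-B}$.

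For each $\ell$ with $1 \le \ell \le \ell_{\max}$, I would likewise split the inner sum as $\sum_{p \le 2x} - \sum_{p \le x}$. After taking absolute values and the supremum over $(a,q) = 1$, each piece is at most the quantity controlled by \cref{conj:bombieri-sato-tate-final} applied with $2x$ in place of $x$; this is legal because the range $\theta < 1/\max(2, \ell-1)$ required there is a superset of the range $\theta < 1/\max(2, \ell_{\max}-1)$ assumed in the corollary (for $\ell \le \ell_{\max}$), and both ranges are independent of $x$. This yields $T_\ell \ll_{B,E,\ell} x(\log x)^{-B}$, and summing the finitely many weighted contributions concludes the proof, with implicit constants depending on $B$, $E$, $\ell_{\max}$, and $u_-$ through the coefficients $b_\ell$. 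The deduction is entirely routine; the substantive analytic content is encapsulated in \cref{conj:bombieri-sato-tate-final}, whose proof is deferred to \cref{sec:bombieri}, so there is no new obstacle here beyond the compatibility check on the range of $\theta$.
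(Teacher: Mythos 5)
Your proposal is correct and follows essentially the same route as the paper: expand $u_-$ via \cref{eq:minorizingpolydefn}, handle the $b_0$ term with the classical Bombieri--Vinogradov theorem, and handle each $\ell\ge 1$ term with \cref{conj:bombieri-sato-tate-final}, using that $\theta<1/\max(2,\ell_{\max}-1)$ is admissible for every $\ell\le\ell_{\max}$. Your additional care about applying the inputs at level $2x$ and splitting the dyadic range is a harmless elaboration of details the paper leaves implicit.
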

\begin{proof}[Proof that \cref{conj:bombieri-sato-tate-final} implies \cref{cor:bombieri-vinogradov-minorizing}]
By \cref{eq:minorizingpolydefn}, we have
\begin{align*}\sum_{\substack{x < p \le 2x\\ p \equiv a\pmod{q}}} u_-(\cos &\theta_p) - b_0 \frac{\pi(2x) - \pi(x)}{\varphi(q)} \\
&=b_0\left[\sum_{\substack{x < p \leq 2x\\ p \equiv a\pmod{q}}}1-\frac{\pi(2x) - \pi(x)}{\varphi(q)}\right]
+\sum_{\substack{x < p \le 2x\\ p \equiv a\pmod{q}}}\sum_{\ell=1}^{\ell_{\max}}b_\ell U_\ell(\cos\theta_p).
\end{align*}
Taking the supremum over $(a,q)=1$ and averaging over moduli less than $x^\theta$, we can bound the first term using the original Bombieri--Vinogradov theorem, and the second using \cref{conj:bombieri-sato-tate-final}.
\end{proof}

In the next two sections, we will use these estimates to derive our results on bounded gaps between primes from a Sato--Tate interval, and on patterns of such primes in the context of the Green--Tao theorem. Subsequently, it will only remain to prove \cref{thm:Ul-prime-number-theorem,conj:bombieri-sato-tate-final} (see \cref{sec:zero-free-siegel}, respectively \cref{sec:bombieri}).

\section{Bounded gaps for Sato--Tate}\label{sec:bounded-gaps}

In this section, we adapt the work of Maynard \cite{Ma15} in order to establish bounded gaps among the primes in $\PP_I$, for suitable intervals. Following the notation in \cref{sec:PNT-BV-sato-tate}, let $u_-$ be a minorizing polynomial of an interval $I \subseteq [-1, 1]$, with Sato--Tate average $b_0 > 0$ and degree $\ell_{\max}$. Let $E/\QQ$ be a non-CM elliptic curve with normalized traces $\cos \theta_p$, and suppose that it satisfies \cref{assumption-symmetric-powers}.

Fix an admissible set $\mc{H}= \{h_1,\ldots,h_k\}$. 
We define 
\[
\widehat{S}(N, \rho) := \sum_{N < n \leq 2N} \Biggl(\sum_{i = 1}^k \one_\PP(n + h_i) u_-(\cos \theta_{n + h_i}) - \rho\Biggr) w_n,
\]
for nonnegative weights $w_n$ to be chosen. Note that although we have not defined $\theta_{n + h_i}$ when $n + h_i$ is not prime, its value does not matter unless $n + h_i$ is prime, due to the presence of the indicator function $\one_\PP (n + h_i)$; hence we use the notation above for brevity. By our choice of $u_-$, we have:
\[
\widehat{S}(N, \rho) \leq \sum_{N < n \leq 2N} \Biggl(\sum_{i = 1}^k \one_{\PP}(n + h_i)\one_{I}(\cos\theta_{n+h_i}) - \rho\Biggr) w_n.
\]
Our goal is to show that $\widehat{S}(N, \rho) > 0$ for sufficiently large $N$.
This would imply that there are infinitely many $n \in (N, 2N]$ such that at least $\lfloor \rho + 1 \rfloor$ of the $n + h_i$ are prime and, in fact, lie in $\mc{P}_I$.

Now define $D_0 := \log \log \log N$ and $W := \prod_{p \leq D_0} p$; note that by the prime number theorem, $W \ll (\log \log N)^2$. Take $N$ large enough such that $\rad(h_i - h_j) \mid W$ for all $1 \leq i, j \leq k$, $i \neq j$. By admissibility of $\mc{H}$, we can choose $v_0\pmod{W}$ such that $(v_0+h_m,W)=1$ for all $m$. We define our weights $w_n$ by
\[
w_n := 
\begin{cases}
\bigl(\sum_{d_i \mid n + h_i \forall i} \lambda_{d_1, \ldots, d_k}\bigr)^2, \qquad & \text{if } n \equiv v_0 \pmod{W}, \\
0, \qquad &\text{otherwise},
\end{cases}
\]
where we choose $\lambda_{d_1,\dots,d_k}$ in \cref{prop:maynard-integral-asymptotics}.
Then we let
\begin{align}\label{eq:S1S2defn}
S_1 := \sum_{\substack{N < n\le 2N \\ n \equiv v_0 \pmod{W}}} w_n,\qquad\quad \widehat{S_2} := \sum_{\substack{N < n\le 2N \\ n \equiv v_0 \pmod{W}}} \Biggl(\sum_{i = 1}^k \one_{\PP}(n + h_i) u_-(\cos\theta_{n+h_i})\Biggr) w_n,
\end{align}
so that $\widehat{S}(N, \rho) = \widehat{S_2} - \rho S_1$. 
Towards showing that this difference is positive for large $N$, we have the following estimates, analogous to those in \cite[Proposition~4.1]{Ma15}.
\begin{proposition}\label{prop:maynard-integral-asymptotics}
Suppose that \cref{assumption-symmetric-powers} holds true, and let $0 < \theta < \wt{\theta}:=1/{\max(2, \ell_{\max} - 1)}$. 
Let $R := N^{\theta/2 - \delta}$ for some small fixed $\delta > 0$. 
Let $F : [0, 1]^k \to \RR$ be a smooth function supported on $\{(x_1, \ldots, x_k) \in [0, 1]^k : \sum_{i=1}^k x_i \leq 1\}$, and define
\[
\lambda_{d_1, \ldots, d_k} := \Biggl(\prod_{i=1}^k \mu(d_i)d_i \Biggr) \sum_{\substack{r_1, \ldots, r_k \\ d_i \mid r_i \\ (r_i, W) = 1}} 
\frac{\mu\bigl(\prod_{i=1}^k r_i\bigr)^2}{\prod_{i=1}^k \varphi(r_i)} F\biggl(\frac{\log r_1}{\log R}, \ldots, \frac{\log r_k}{\log R} \biggr).
\]
In particular, $\lambda_{d_1, \ldots, d_k} = 0$ unless $\prod_{i = 1}^k d_i$ is at most $R$, coprime with $W$, and squarefree. Then we have that
\begin{align*}
S_1 &= \frac{(1+o(1))\varphi(W)^{k}N(\log R)^{k}}{W^{k+1}} I_{k}(F), \\
\widehat{S_2} &= b_0 \frac{(1 + o(1))\varphi(W)^k N (\log R)^{k+1}}{W^{k+1} \log N} \sum_{m=1}^k J_k^{(m)}(F),
\end{align*}
where $I_k(F)$ and $J_k^{(m)}(F)$ are iterated integrals defined in \cite[Proposition~4.1]{Ma15}, and we assume that they are positive.
\end{proposition}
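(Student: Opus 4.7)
The plan is to follow Maynard's original argument in \cite{Ma15} for the quantities $S_1$ and $S_2^{(m)}$, the only novelty being the Sato--Tate weight $u_-(\cos \theta_{n+h_m})$ appearing in $\widehat{S_2}$, which we absorb using our Bombieri--Vinogradov analogue \cref{cor:bombieri-vinogradov-minorizing}. This inserts the factor $b_0$ and otherwise leaves Maynard's main term intact.

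For $S_1$, the computation is identical to Maynard's, since no Sato--Tate weight appears. Expanding the square in $w_n$ and interchanging orders of summation, one uses the standard count
\[
\#\set{n \in (N, 2N] : n \equiv v_0 \pmod{W},\ [d_i, e_i] \mid n+h_i\ \forall i} = \frac{N}{W \prod_i [d_i, e_i]} + O(1),
\]
substitutes the definition of $\lambda_{d_1,\ldots,d_k}$, and simplifies by multiplicativity to an Euler product that isolates the singular-series factor $\varphi(W)^k/W^{k+1}$. A change of variables $x_i = \log r_i / \log R$ then yields $I_k(F)$. The $O(1)$ local errors sum to at most $R^2 (\log R)^{O(k)} = O\bigl(N^{\theta-2\delta}(\log N)^{O(k)}\bigr)$, which is negligible against the main term of size $N(\log R)^k/W^{k+1}$.

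For $\widehat{S_2}$, decompose $\widehat{S_2} = \sum_{m=1}^k \widehat{S_2^{(m)}}$, where $\widehat{S_2^{(m)}}$ isolates the $i = m$ contribution. Expanding $w_n$ and swapping sums, the indices $[d_m, e_m]$ must divide the prime $n + h_m \ge N/2$, while $[d_m, e_m] \le d_m e_m \le R^2 < N/2$ for $N$ large, so $d_m = e_m = 1$. By the Chinese Remainder Theorem, the inner count collapses to a single residue class $a \pmod{q}$ with modulus $q := W \prod_{j \ne m} [d_j, e_j]$, giving
\[
\sum_{\substack{N + h_m < p \le 2N + h_m \\ p \equiv a + h_m \pmod{q}}} u_-(\cos \theta_p).
\]
Since $W \ll (\log \log N)^2$ and $\prod_{j\ne m} [d_j, e_j] \le R^2 = N^{\theta-2\delta}$, we have $q \le N^{\theta - \delta} \le N^\theta$ for $N$ large, so \cref{cor:bombieri-vinogradov-minorizing} is applicable on average. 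Replacing the inner sum by its main term $b_0\,(\pi(2N) - \pi(N))/\varphi(q)$ and running Maynard's remaining computation over the $d_j, e_j$ with $j \ne m$ reproduces his asymptotic for $S_2^{(m)}$ multiplied by $b_0$, producing the stated formula with integral $J_k^{(m)}(F)$.

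The main obstacle is controlling the accumulated error from \cref{cor:bombieri-vinogradov-minorizing} after summing over all $(d_j, e_j)_{j \ne m}$. Using the trivial bound $|\lambda_{d_1, \ldots, d_k}| \ll (\log R)^k$ and grouping tuples by the resulting modulus $q$, which introduces a divisor-type factor $\tau_C(q)$ for some $C = C(k)$, the accumulated error is bounded by $(\log N)^{O(k)}$ times a $\tau_C$-weighted version of the left-hand side of \cref{cor:bombieri-vinogradov-minorizing}. The $\tau_C$ weight is removed by Cauchy--Schwarz against the trivial pointwise bound of order $N/\varphi(q)$ for the inner supremum; invoking \cref{cor:bombieri-vinogradov-minorizing} with $B$ chosen sufficiently large in terms of $k$ then yields a total error of $o\bigl(N(\log R)^{k+1}/(W^{k+1}\log N)\bigr)$. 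The constraint $\theta < 1/\max(2, \ell_{\max}-1)$ is precisely what determines the admissible range of $R$, and hence ultimately the strength of the bounded-gaps result.
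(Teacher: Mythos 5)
Your proposal is correct and follows essentially the same route as the paper's proof: $S_1$ is taken verbatim from Maynard, $\widehat{S_2}$ is split into $\widehat{S_2}^{(m)}$, primality forces $d_m=e_m=1$ so CRT collapses the inner sum to one residue class modulo $q=W\prod_{j\ne m}[d_j,e_j]\ll N^{\theta}$, the main term $b_0(\pi(2N)-\pi(N))/\varphi(q)$ reproduces Maynard's computation with the extra factor $b_0$, and the accumulated error is handled exactly as in the paper via $\lambda_{\max}\ll(\log R)^k$, a $\tau_{3k}$-type divisor count over moduli, Cauchy--Schwarz against the trivial bound $N/\varphi(q)$, and \cref{cor:bombieri-vinogradov-minorizing} with $B$ large. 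No gaps worth flagging.
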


\begin{remark}
Since $S_1$ coincides with Maynard's notation \cite{Ma15}, only the estimate for our $\widehat{S_2}$ is new.
\end{remark}

\begin{remark}
The errors $o(1)$ in the asymptotics of $S_1$ and $\widehat{S_2}$ above are in fact $O(1/D_0)$ with an absolute implied constant, and in this section we take $D_0 = \log \log \log N$. However, the terms $O(1/D_0)$ are valid even when $D_0$ is a large enough constant depending only on $k$ and $\mc H$; we will need this different choice in \cref{sec:green-tao}.
\end{remark}

\begin{proof}
In light of \cite[Proposition~4.1]{Ma15}, it suffices to consider $\widehat{S_2}$. The implied constants in what follows will depend on $\mc{H}$ and $k$. Also, we will often write $\mb{d}$ and $\mb{e}$ instead of $(d_1, \ldots, d_k)$ and $(e_1, \ldots, e_k)$ respectively, for brevity. Let us decompose $\widehat{S_2} = \sum_{m = 1}^k \widehat{S_2}^{(m)}$, where we define
\begin{align*}
\widehat{S_2}^{(m)} :=& \sum_{\substack{N < n \leq 2N \\ n \equiv v_0 \pmod{W}}} \one_{\PP}(n + h_m) u_-(\cos\theta_{n + h_m}) \left(\sum_{d_i \mid n + h_i \forall i} \lambda_{\mb d}\right)^2 \\ 
=& \sum_{\mb{d}, \mb{e}} \lambda_{\mb{d}} \lambda_{\mb{e}} \sum_{\substack{N < n \leq 2N \\ n \equiv v_0 \pmod{W} \\ [d_i, e_i] \mid n + h_i}} \one_{\PP}(n + h_m) u_-(\cos\theta_{n+h_m}).
\end{align*}
As in the proof of \cite[Proposition~4.1]{Ma15}, the sum restricts to the case when $W, [d_i, e_i], [d_j, e_j]$ are all coprime. In that case, by the Chinese remainder theorem, the sum can be rewritten with $n + h_m$ lying in a single residue class $a$ mod $q := W \prod_{i = 1}^k [d_i, e_i]$. Moreover, the inner sum (weighted by $\lambda_{\mb{d}} \lambda_{\mb{e}}$) is seen to vanish unless $d_m = e_m = 1$, and in that case $a$ must be coprime with $q$. For such $\mb{d}$ and $\mb{e}$, the inner sum becomes
\[
\sum_{\substack{N < n \le 2N \\ n + h_m \equiv a \pmod{q}}} \one_{\PP}(n + h_m) u_-(\cos\theta_{n + h_m}).
\]
Note that one has
\[
\left(\sum_{\substack{N < n \le 2N \\ n + h_m \equiv a \pmod{q}}} \one_{\PP}(n + h_m) u_-(\cos\theta_{n + h_m})\right)-\left(\sum_{\substack{N < n \le 2N \\ n  \equiv a \pmod{q}}} \one_{\PP}(n) u_-(\cos\theta_{n})\right) = O(1),
\]
since $h_m$ is finite and $u_-$ is bounded (it is a polynomial on a compact interval). 
We denote
\[
E(N,q) := 1+\sup_{(a,q)=1}\left|\sum_{\substack{N < n \le 2N \\ n\equiv a \pmod{q}}} \one_{\PP}(n) u_-(\cos\theta_{n}) - b_0 \frac{\pi(2N) - \pi(N)}{\varphi(q)}\right|.
\]
Putting these together, we have
\[
\sum_{\substack{N < n \le 2N \\ n + h_m \equiv a \pmod{q}}} \one_{\PP}(n + h_m) u_-(\cos\theta_{n + h_m})= b_0 \frac{\pi(2N) - \pi(N)}{\varphi(q)} + O(E(N,q)).
\]
Plugging this into $\widehat{S_2}^{(m)}$ and using the multiplicativity of $\varphi$, this implies
\[
\widehat{S_2}^{(m)} = b_0 \frac{\pi(2N) - \pi(N)}{\varphi(W)} {\sideset{}{'}\sum_{\substack{\mb{d}, \mb{e} \\ d_m =1\\ e_m = 1}}} \frac{\lambda_\mb{d} \lambda_\mb{e}}{\prod_{i = 1}^k \varphi([d_i, e_i])} + O\Biggl({\sum_{\mb{d}, \mb{e}}} '|\lambda_\mb{d} \lambda_\mb{e}| E(N, q) \Biggr),
\]
where ${\sum}'$ denotes the restriction that all $W, [d_i, e_i], [d_j, e_j]$ are pairwise coprime.

We first bound the error term using \cref{cor:bombieri-vinogradov-minorizing}. 
As in \cite[(5.9)]{Ma15} we have $\lambda_{\max} \ll y_{\max} (\log R)^{k}$, where $\lambda_{\max} := \sup_\mb{d} \lambda_\mb{d}$ and $y_{\max}$ is defined as in \cite[Lemma 5.1]{Ma15}. This gives a bound for $|\lambda_\mb{d} \lambda_\mb{e}|$, and we can restrict the range of possible $q$'s to squarefree $q = r < R^2W$. But for any given squarefree $r$, there are at most $\tau_{3k}(r)$ choices of $d_1, \ldots , d_k, e_1, \ldots, e_k$ such that $r= W\prod_{i=1}^{k} [d_i, e_i]$, where $\tau_{3k}(r)$ is the number of ways to write $r$ as a product of $3k$ positive integers. Thus we get an error term of
\[
\ll y_{\max}^2(\log R)^{2k} \sum_{r<R^2 W} \mu(r)^2 \tau_{3k}(r) E(N,r).
\]

By Cauchy--Schwarz, as well as the trivial bound $E(N,q)\ll_{\ell_{\max}, u_-} N/\varphi(q)$, we get an error of
\begin{align*}
\ll y_{\max}^2 (\log R)^{2k}\left(\sum_{r<R^2 W}\mu(r)^2 \tau_{3k}^{2}(r) \frac{N}{\varphi(r)}\right)^{1/2}\left(\sum_{r<R^2 W} \mu(r)^2 E(N,r)\right)^{1/2}.
\end{align*}
Note that $R^2 W\ll N^{\theta}$. The middle factor is bounded by $N^{1/2}(\log N)^{3k/2}$. By \cref{cor:bombieri-vinogradov-minorizing}, we have that the last factor is bounded by $(N(\log N)^{-A})^{1/2}$ for all $A>0$. So the total error is $\ll_A y_{\max}^2N(\log N)^{-A}$, concluding our analysis of the error term. But the main term in our expression of $\widehat{S_2}^{(m)}$ is exactly the same as \cite[(5.18)]{Ma15}. Hence, our asymptotic for $\widehat{S_2}$ is precisely
\[
\widehat{S_2} = b_0 \frac{(1 + o(1))\varphi(W)^k N (\log R)^{k+1}}{W^{k+1} \log N} \sum_{m=1}^k J_k^{(m)}(F).
\]
This finishes the proof.
\end{proof}

Next, we obtain a result analogous to  \cite[Proposition~4.2]{Ma15}, which is the final technical result before proving bounded gaps in Sato--Tate intervals.

\begin{proposition}\label{prop:maynard-primes-in-interval}
Suppose that an interval $I \subseteq [-1, 1]$ is $\Sym^{\ell_{\max}}$-minorizable for some $\ell_{\max} \geq 0$ by a polynomial with average $b_0$ against $\mu_{ST}$. Assume the hypothesis and notation of \cref{prop:maynard-integral-asymptotics}, and let $\{h_1, \ldots, h_k\}$ be an admissible set. Denote by $\mc{S}_k$ the set of all Riemann-integrable real functions supported on $\{(x_1, \ldots, x_k) \in [0, 1]^k : \sum_{i = 1}^k x_i \leq 1\}$. Define
\[
M_k := \sup_{F \in \mc{S}_k} \frac{\sum_{m=1}^k J_k^{(m)}(F)}{I_k(F)}, \qquad \qquad r_k := \biggl\lceil b_0 M_k \frac{\theta}{2} \biggr\rceil.
\]
Then there are infinitely many integers $n$ such that at least $r_k$ of the numbers $n + h_i$ lie in $\mc{P}_I$.
\end{proposition}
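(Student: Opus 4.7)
The plan is to follow the strategy of Maynard's Proposition~4.2 in \cite{Ma15}, substituting the new asymptotic for $\wh{S_2}$ from \cref{prop:maynard-integral-asymptotics} in place of the standard weighted prime count. Since $u_-(\cos\theta_p) \le \one_I(\cos\theta_p)$ for every prime $p$ by $\Sym^{\ell_{\max}}$-minorizability, and $w_n \ge 0$, one has
\[
\wh S(N, \rho) = \wh{S_2} - \rho S_1 \le \sum_{N < n \le 2N} \left( \sum_{i=1}^k \one_\PP(n+h_i) \one_I(\cos\theta_{n+h_i}) - \rho \right) w_n.
\]
If $\wh S(N, \rho) > 0$ for all sufficiently large $N$, then for each such $N$ some $n \in (N, 2N]$ with $w_n > 0$ must satisfy $\sum_i \one_\PP(n+h_i) \one_I(\cos\theta_{n+h_i}) > \rho$. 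Distinct values of $N$ along any sequence tending to infinity yield distinct $n$, so infinitely many integers $n$ have at least $\lfloor \rho \rfloor + 1$ of the shifts $n + h_i$ lying in $\mc P_I$. It therefore suffices to prove $\wh S(N, \rho) > 0$ for all large $N$ whenever $\rho < b_0 M_k \theta / 2$.

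By \cref{prop:maynard-integral-asymptotics} applied with $\log R = (\theta/2 - \delta)\log N$, whenever $F$ is smooth on the open simplex with $I_k(F) > 0$ and $\sum_m J_k^{(m)}(F) > 0$,
\[
\frac{\wh{S_2}}{S_1} = b_0 \left( \frac{\theta}{2} - \delta \right) \frac{\sum_{m=1}^k J_k^{(m)}(F)}{I_k(F)}\, (1 + o(1)).
\]
Given any $\eta > 0$, the definition of $M_k$ as a supremum over $\mc S_k$ provides a Riemann-integrable $F_0$ with $\sum_m J_k^{(m)}(F_0)/I_k(F_0) > M_k - \eta$. A standard smooth approximation, identical to the one used in \cite[Proposition~4.2]{Ma15}, replaces $F_0$ with a smooth compactly supported $F$ whose ratio is within $\eta$ of that of $F_0$ while still satisfying the hypotheses of \cref{prop:maynard-integral-asymptotics}. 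Choosing $\delta$ small enough that $b_0 (\theta/2 - \delta)(M_k - 2\eta) > \rho$, we conclude $\wh{S_2}/S_1 > \rho$, and hence $\wh S(N, \rho) > 0$, for all sufficiently large $N$.

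To obtain the stated count $r_k = \lceil b_0 M_k \theta / 2 \rceil$, fix $\eta \in (0, 1)$ and set $\rho := r_k - \eta$. Since $r_k - 1 < b_0 M_k \theta / 2 \le r_k$, for $\eta$ sufficiently small one has $\rho < b_0 M_k \theta / 2$, so the previous step produces infinitely many $n$ for which strictly more than $r_k - \eta$ of the shifts $n + h_i$ lie in $\mc P_I$; as the count is an integer and $\eta < 1$, it must be at least $r_k$. The substantive input, already contained in \cref{prop:maynard-integral-asymptotics}, is the Sato--Tate asymptotic for $\wh{S_2}$; the rest is a formal optimization mirroring Maynard's. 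The only mild technical obstacle is the smooth approximation of a near-optimal Riemann-integrable $F_0$ while preserving the simplex support, positivity of $I_k(F)$ and $\sum_m J_k^{(m)}(F)$, and near-optimality of the ratio; this is handled exactly as in \cite[Proposition~4.2]{Ma15} and is independent of the Sato--Tate setting.
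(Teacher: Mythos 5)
Your proposal is correct and takes essentially the same route as the paper, which simply runs Maynard's \cite[Proposition~4.2]{Ma15} argument with the extra factor $b_0$ coming from the $\widehat{S_2}$ asymptotic of \cref{prop:maynard-integral-asymptotics}. One small slip in the last step: since $r_k=\lceil b_0M_k\theta/2\rceil$, writing $\rho=r_k-\eta$ requires $\eta> r_k-b_0M_k\theta/2$ (equivalently, any $\rho$ strictly between $r_k-1$ and $b_0M_k\theta/2$, which exists because $r_k-1<b_0M_k\theta/2$), not $\eta$ ``sufficiently small''; with that trivial fix the argument is complete.
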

\begin{proof}
This is the same argument as in \cite[Proposition~4.2]{Ma15} \emph{mutatis mutandis}, where one must account for the extra factor of $b_0$ coming from the estimate of $\widehat{S_2}$.
\end{proof}

Using the last two propositions, we are ready to prove \cref{thm:bounded-gaps-sato-tate}, our main result on bounded gaps in Sato--Tate intervals.

\begin{theorem}\label{thm:bounded-gaps-sato-tate}
Suppose that an interval $I \subseteq [-1, 1]$ is $\Sym^{\ell_{\max}}$-minorizable, for some $\ell_{\max}\geq 0$, by a polynomial with average $b_0$ against $\mu_{ST}$. Let $E$ be a non-CM elliptic curve, and assume that $L(s,\Sym^\ell E)$ is automorphic for all $0\le\ell\le\ell_{\max}$.
Define $\widetilde\theta:=1/\max(2,\ell_{\max}-1)$.
Then for every positive integer $m$, we have
\[\liminf_{n\to\infty}(p_{I,n+m}-p_{I,n})\ll\frac{m}{b_0\widetilde\theta}\exp\biggl(\frac{2m}{b_0\widetilde\theta}\biggr),\]
where the implied constant is absolute.
\end{theorem}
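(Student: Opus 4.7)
The plan is to apply \cref{prop:maynard-primes-in-interval}: for an admissible set $\mc H = \{h_1, \ldots, h_k\}$, the proposition produces infinitely many integers $n$ such that at least $r_k := \lceil b_0 M_k \theta/2\rceil$ of the shifts $n + h_i$ lie in $\mc P_I$. Each such $n$ yields a window of length at most $\max_i h_i - \min_i h_i$ containing $r_k$ primes in $\mc P_I$, so $\liminf_{n\to\infty}(p_{I,n+m}-p_{I,n}) \le \max_i h_i - \min_i h_i$ whenever we can arrange $r_k \ge m+1$. Thus it suffices to (i) tune $k$ and $\theta$ so that $r_k \ge m+1$, and (ii) exhibit an admissible $k$-tuple of small diameter.

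For (i), I would invoke Maynard's lower bound $M_k > \log k - 2\log\log k - 2$, valid for $k$ sufficiently large (see \cite[Proposition~4.3]{Ma15}). This reduces the target $b_0 M_k \theta/2 > m$ to
\[
\log k \;\ge\; \frac{2m}{b_0\theta} + 2\log\log k + 2.
\]
Since \cref{prop:maynard-integral-asymptotics} forces $\theta < \widetilde\theta$ strictly, I would set $\theta = \widetilde\theta(1-\delta)$ with $\delta$ of order $b_0\widetilde\theta/m$; this makes $2m/(b_0\theta) = 2m/(b_0\widetilde\theta) + O(1)$ with an absolute additive constant. Then the displayed inequality is satisfied by any $k$ with $\log k \asymp m/(b_0\widetilde\theta)$, so one may take $k$ comparable to $\exp(2m/(b_0\widetilde\theta))$, with the polynomial correction coming from the $\log\log k$ term absorbed by the implied constant.

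For (ii), I would take $\mc H$ to consist of the first $k$ primes exceeding $k$; this set is admissible, since primes $p \le k$ do not divide any $h_i$ (so the residue $0 \bmod p$ is avoided), while for $p > k$ one has $|\mc H| < p$. By the prime number theorem, the diameter of $\mc H$ is $\ll k\log k$. Combining this with the choice of $k$ above yields
\[
\max_i h_i - \min_i h_i \;\ll\; k\log k \;\ll\; \frac{m}{b_0\widetilde\theta}\exp\!\biggl(\frac{2m}{b_0\widetilde\theta}\biggr),
\]
as required. The main subtlety is the joint calibration of $(\theta, k)$: $\theta$ must approach $\widetilde\theta$ rapidly enough — the choice $\delta \asymp b_0\widetilde\theta/m$ — for the final exponent to be $2m/(b_0\widetilde\theta)$ rather than $2m/(b_0\theta_0)$ for some fixed $\theta_0 < \widetilde\theta$, while $k$ must be large enough that $M_k$ exceeds $2m/(b_0\theta)$ yet not so large that $k \log k$ inflates beyond the claimed bound. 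Striking this balance is exactly the content of the Maynard parameter optimisation, and the absolute nature of the implied constant follows from the fact that all other dependencies have been incorporated into $b_0$ and $\widetilde\theta$.
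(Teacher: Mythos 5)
Your overall strategy is the same as the paper's: apply \cref{prop:maynard-primes-in-interval} with $\mc H$ the first $k$ primes exceeding $k$ (diameter $\ll k\log k$), and calibrate $\theta<\widetilde\theta$ and $k$ so that $\lceil b_0M_k\theta/2\rceil\ge m+1$. The admissibility and diameter arguments, and the choice $\theta=\widetilde\theta(1-\delta)$ with $\delta\asymp b_0\widetilde\theta/m$ (equivalent to the paper's $\theta=(1-1/\log k)\widetilde\theta$), are all fine.

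However, there is a genuine gap in step (i): you use Maynard's bound $M_k>\log k-2\log\log k-2$ and claim the $\log\log k$ term is ``absorbed by the implied constant.'' It cannot be. Write $L:=2m/(b_0\widetilde\theta)$. Your requirement is $\log k\ge L+2\log\log k+O(1)$, and if you take $k\asymp e^{L}$ with an absolute implied constant, then $\log k=L+O(1)$ and the inequality reads $O(1)\ge 2\log\log k+O(1)\approx 2\log L$, which fails for large $L$; so with that choice $r_k\ge m+1$ is not guaranteed. To satisfy the inequality you are forced to take $k\gg L^{2}e^{L}$, whence the diameter bound becomes $k\log k\gg L^{3}e^{L}$, i.e.\ $\ll (m/(b_0\widetilde\theta))^{3}\exp(2m/(b_0\widetilde\theta))$ — weaker than the stated $\ll (m/(b_0\widetilde\theta))\exp(2m/(b_0\widetilde\theta))$, and the discrepancy is a factor $(m/(b_0\widetilde\theta))^{2}$ that depends on $m$ and $b_0$, so it cannot be hidden in an \emph{absolute} constant. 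The paper avoids this by invoking the Polymath 8b bound $M_k\ge \log k-C$ for $k\ge C$ with $C$ absolute (\cite[Theorem~23]{Po14}), which has no $\log\log k$ loss; with that input your argument goes through verbatim with $k=\max(3,C,\exp(L+C+1))$ and yields exactly the claimed bound. So either replace Maynard's $M_k$ estimate by the Polymath one, or weaken the statement you prove; as written, the proposal does not establish the theorem with the stated shape and absolute constant.
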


\begin{proof}
By \cref{prop:maynard-primes-in-interval}, it suffices to find an admissible tuple $\{h_1, h_2, \dots, h_k\}$ and choose some $\theta<\wt{\theta}$ such that 
\[
\max_{1\le i, j \le k} |h_i - h_j|\ll \frac{m}{b_0 \theta}\exp\biggl(\frac{2m}{b_0\theta}\biggr),
\]
for some $k$ with $m+1\le \lceil b_0M_k\theta/2\rceil$.
In particular, it suffices to choose $k$ and some $\theta<\wt{\theta}$, depending on $k$, such that $m< b_0 M_k \theta/2$.
On the other hand, by \cite[Theorem~23]{Po14} there is an absolute, effective constant $C > 0$ such that $M_k \ge \log k - C$ for all $k \ge C$.
In light of this, we choose
\[
k = \max\biggl(3,C, \exp\biggl(\frac{2m}{b_0\wt{\theta}}+(C+1)\biggr)\biggr),\qquad\quad \theta = \biggl(1-\frac{1}{\log k}\biggr)\wt{\theta}.
\]
We can take $\max_{1\le i, j \le k} |h_i - h_j| \ll k \log k$ by letting $\mc H$ contain the first $k$ primes greater than $k$. It follows that
\begin{align*}
    b_0 M_k \frac{\theta}{2}\ge b_0 (\log k - C)\frac{\theta}{2} = b_0(\log k - C)\biggl(1 - \frac{1}{\log k}\biggr) \frac{\wt{\theta}}{2} > b_0 \frac{\wt{\theta}}{2}(\log k - C - 1) \ge m,
\end{align*}
hence we have bounded gaps $p_{I,n+m} - p_{I,n}$ of size 
\[
\ll k \log k \ll  \frac{m}{b_0 \wt{\theta}} \exp \biggl(\frac{2m}{b_0\wt{\theta}}\biggr)
\]
for all $m$.
\end{proof}

\begin{proof}[Proof of \cref{thm:simplified-bounded-gaps}]
Choose $\ell_{\max} = 8$, so that $L(s, \Sym^\ell E)$ is automorphic for all $\ell \le \ell_{\max}$ and for all non-CM elliptic curves $E$ \cite{CT17}. By the computations in \cref{lem:minorization-big-measure}, all intervals $I \subseteq [-1, 1]$ with $\mu_{ST}(I) \ge 0.36$ can be minorized as in the hypothesis of \cref{thm:bounded-gaps-sato-tate}. Hence \cref{thm:simplified-bounded-gaps} follows for large enough $C_I$ (note that $\widetilde\theta = 1/7$ and $b_0$ only depends on $I$).
\end{proof}

\section{Green--Tao for patterns of Sato--Tate primes}\label{sec:green-tao}

Our goal in this section is to prove \cref{thm:simplified-green-tao}, an analogue of the Green--Tao theorem in the setting of Sato--Tate primes in bounded gaps, assuming the results in \cref{sec:PNT-BV-sato-tate}. In fact, we will prove a more general version of this result, stated in \cref{thm:green-tao-sato-tate}.
Our proof relies on methods developed by Pintz \cite{Pi10} and Vatwani and Wong \cite{VW17}.

We recall our setup.
We fix $E$, a non-CM elliptic curve over $\QQ$, and $I$, a closed interval in $[-1,1]$ which is minorized by the polynomial \cref{eq:minorizingpolydefn} with $b_0>0$, where $\ell_{\max}$ satisfies \cref{assumption-symmetric-powers}.  
We choose $0 < \theta < 1/\max(2,\ell_{\max}-1)$ as the ``level of distribution of $\mc{P}_I$'', and $\delta>0$ is a small constant. We fix $\mc{H} = \{h_1,\ldots,h_k\}$ an admissible set. 
For a positive integer $N$, let $R = N^{{\theta}/{2}-\delta}$. 
Let $\lambda_{\mb{d}}$, and $w_n$ be defined as in \cref{prop:maynard-integral-asymptotics}, taking $F$ to satisfy the same hypotheses. However, $D_0$ will be chosen as a sufficiently large constant depending only on $\mc{H}$, rather than $\log\log\log N$.

We first state a theorem of Pintz which characterizes a sufficient condition for the existence of arithmetic progressions of arbitrary length.
\begin{theorem}[{\cite[Theorem~5]{Pi10}}]\label{thm:Pintz}
Let $\mc H = \{h_1, \ldots, h_k\}$ be an admissible set, and suppose that a set $S(\mc H)$ of positive integers satisfies the two conditions
\begin{align*}
    P^-\Biggl( \prod_{i = 1}^k (n + h_i) \Biggr) \ge n^{c_1(k)}\quad\emph{for all }n \in S(\mc H),
    \qquad
     \#\{n \leq x : n \in S(\mc H)\} \ge c_2(k)\frac{x}{(\log x)^k}\emph{ for all }x,
\end{align*}
for some constants $c_1(k), c_2(k) > 0$. Then for every positive integer $t$, $S(\mc H)$ contains infinitely many $t$-term arithmetic progressions.
\end{theorem}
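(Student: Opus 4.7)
The strategy is to follow the Green--Tao transference principle, adapted to $k$-tuples of almost-primes as in Pintz~\cite{Pi10}. The plan is to construct a pseudorandom majorant $\nu$ for (a normalization of) $\one_{S(\mc H)}$, check that $S(\mc H)$ has positive density relative to $\nu$, and then apply a relative Szemer\'edi theorem to extract arithmetic progressions.

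First I would fix a $W$-trick parameter $w$, set $W = \prod_{p \le w} p$, and build a Selberg-type majorant
\[
\nu(n) := c_{k,\mc H}\, (\log R)^k
\left(\sum_{\substack{d \mid \prod_{i=1}^k (n+h_i)\\ d < R}}
\mu(d)\, \chi\!\left(\tfrac{\log d}{\log R}\right)\right)^{\!2},
\]
where $\chi$ is a smooth compactly supported cutoff, $R = n^{c_1(k)/2 - \eps}$, and $c_{k,\mc H}$ is chosen to normalize $\nu$ to average value close to $1$. The almost-primality hypothesis $P^-(\prod_i (n+h_i)) \ge n^{c_1(k)}$ forces the inner sieve sum to collapse to the $d=1$ term whenever $n \in S(\mc H)$, so $\nu(n) \gg (\log n)^k$ on $S(\mc H)$; combined with the lower bound $\#\{n \le x : n \in S(\mc H)\} \gg x/(\log x)^k$, this yields positive density of $S(\mc H)$ relative to $\nu$.

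Next I would verify that $\nu$ is a pseudorandom measure in the Green--Tao sense. The linear forms condition reduces to asymptotics for multidimensional Selberg sieve sums indexed by systems of affine-linear forms, which follow from the Goldston--Pintz--Y\i ld\i r\i m calculation exactly as in the original Green--Tao proof. The correlation condition follows similarly by expanding into local Euler products at primes dividing the various shifted products. Admissibility of $\mc H$ together with the $W$-trick jointly guarantee that no local factor degenerates, so both conditions carry over cleanly from the prime setting.

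With pseudorandomness established, the relative Szemer\'edi theorem (either in the original Green--Tao formulation or in the Conlon--Fox--Zhao refinement) immediately yields that $S(\mc H)$ contains infinitely many $t$-term arithmetic progressions for every $t \ge 3$. The main obstacle is the verification of the correlation condition in the $k$-dimensional almost-prime setting: one must uniformly bound divisor-type sums on the products $\prod_i(n + h_i + a_j)$ as the AP-spacings $a_j$ vary, and this is precisely where Pintz's refinement of the Green--Tao framework is needed to replace the role of genuine primes by almost-primes with no prime factors below $n^{c_1(k)}$.
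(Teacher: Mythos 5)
The paper does not actually prove this statement—it is imported directly from Pintz \cite{Pi10}—and your sketch follows essentially the same route as Pintz's original argument: a GPY/Selberg-type pseudorandom majorant, the key observation that the almost-primality condition collapses the sieve sum to the $d=1$ term so that $\nu \gg (\log R)^k$ pointwise on $S(\mc H)$ while the density hypothesis then gives positive relative density, and transference via a relative Szemer\'edi theorem. The only points to tighten are routine: $R$ must be a fixed small power of the scale $N$ (not of the variable $n$), and the $W$-trick step needs a pigeonhole over the admissible residue classes modulo $W$ (automatic for large $n\in S(\mc H)$ since $P^-\bigl(\prod_i(n+h_i)\bigr)>W$) before applying the transference.
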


In light of \cref{thm:Pintz}, the proof of \cref{thm:green-tao-sato-tate} now reduces to finding a suitable subset $\mc{H'}\subseteq \mc{H}$ of size $m+1$ as well as $S(\mc{H'})$ that satisfies all conditions of this theorem. 
Our approach follows that of Vatwani and Wong \cite{VW17}, replacing Chebotarev sets of primes with Sato--Tate prime sets $\mc{P}_I$.
Recalling the definitions of $S_1$ and $\widehat{S_2}$ in \cref{eq:S1S2defn}, we define, for a constant $c_1(k)$, the following sub-sums:
\begin{align*}
S_1^-(N,\mc P_I)&:=\sum_{\substack{N< n\le 2N\\n\equiv v_0\pmod W\\P^-(\prod_{i=1}^k(n+h_i))<n^{c_1(k)}}}w_n,\\
\widehat{S_2}^-(N,\mc P_I)&:=
\sum_{\substack{N < n\le 2N\\n\equiv v_0\pmod{W}\\P^-(\prod_{i=1}^k(n+h_i))<n^{c_1(k)}}}\Biggl(\sum_{i = 1}^k \one_\PP(n + h_i) u_-(\cos \theta_{n + h_i})\Biggr)w_n.
\end{align*}
In other words, $S_i^-(N,\mc P_I)$ is the contribution to $S_i$ coming from those terms such that some $n+h_j$ has a prime factor less than $n^{c_1(k)}$. 

We now provide a roadmap for the remainder of this section.
Following this paragraph we state \cref{lem:bound-on-S1pj}, which will help bound $S_1^-(N,\mc P_I)$ and $\widehat{S_2}^-(N,\mc P_I)$. 
These estimates will imply that the main contribution to the sums $S_i$ come from those terms such that $n+h_j$ has large prime factors for all $j$. 
In this way, we will be able to bound from below the quantity of $n$ with $P^-(\prod_{j=1}^k (n+h_j)) > n^{c_1(k)}$. 
Subsequently, \cref{prop:average-Pintz-condition,prop:Pintz-condition} will lead to a suitable choice of $\mc{H'}\subseteq \mc{H}$ and $S(\mc{H})$ which satisfy the hypotheses of \cref{thm:Pintz}.
Finally, we will conclude this section with a proof of \cref{thm:green-tao-sato-tate}.

\begin{lemma}\label{lem:bound-on-S1pj}
Given any $D_0 > \sup\{|h_i-h_j|:1\leq i,j\leq k\}$, define $W := \prod_{p\leq D_0}p$. For any $1\leq j\leq k$ and any prime $p < R$, we have for sufficiently large $N$ (in terms of $k, \mc{H}, D_0$) that
\[
S_{1,p}^{(j)} := \sum_{\substack{N < n\le 2N \\ n\equiv v_0 \pmod{W}\\ p\mid n+h_j}} w_n\ll_{F,k} \biggl(\frac{(\log p)^2}{p(\log R)^2}+\frac{1}{p^3}\biggr) \frac{N(\log R)^k}{W}.
\]
\end{lemma}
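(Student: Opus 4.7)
I would begin by handling the easy case $p \le D_0$: since $w_n$ is supported on $n \equiv v_0 \pmod W$ with $(v_0 + h_j, W) = 1$, no such $n$ satisfies $p \mid n + h_j$ when $p \mid W$, giving $S_{1,p}^{(j)} = 0$. Thus I assume $p > D_0$, so $(p, W) = 1$ and (by our hypothesis on $D_0$) also $p \nmid h_i - h_j$ for all $i \ne j$.

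Next, expand $w_n = \sum_{\mb d, \mb e} \lambda_{\mb d} \lambda_{\mb e} \prod_i \one_{[d_i, e_i] \mid n + h_i}$ and swap the order of summation. For fixed $(\mb d, \mb e)$ with $\lambda_\mb d \lambda_\mb e \ne 0$, evaluate the inner count of admissible $n$ by CRT. The constraint $p \mid n + h_j$, combined with $p \nmid h_i - h_j$ for $i \ne j$, leads to a clean dichotomy: either $p \mid [d_j, e_j]$ (Case A, in which $p \mid n + h_j$ is implied, and the inner count is $\tfrac{N}{W \prod_i [d_i, e_i]} + O(1)$); or $p$ is coprime to every $[d_i, e_i]$ (Case B, in which $p \mid n + h_j$ is an independent congruence, and the inner count is $\tfrac{N}{pW \prod_i [d_i, e_i]} + O(1)$). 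Decompose $S_{1,p}^{(j)} = \Sigma_A + \Sigma_B + \mathcal E$ accordingly, with $\mathcal E$ absorbing the $O(1)$ errors.

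Then, for each of $\Sigma_A$ and $\Sigma_B$, I would apply Maynard's substitution $\lambda_{\mb d} \mapsto y_{\mb r}$ from \cite[Lemma~5.1]{Ma15} and carry out the asymptotic analysis as in \cite[Lemma~5.2]{Ma15}, with the central analytic input being the Mertens-type replacement estimate
\[
\sum_{\substack{r \le R,\ (r, W) = 1 \\ p \mid r}} \frac{\mu(r)^2}{\varphi(r)} \ll \frac{\varphi(W)}{W} \cdot \frac{\log(R/p)}{p}
\]
used in place of the standard estimate in the restricted one-dimensional sums. In $\Sigma_A$, the restriction $p \mid [d_j, e_j]$ corresponds after substitution to $p \mid r_j$ or $p \mid s_j$; each such restriction costs a factor of $\log p /(p \log R)$ relative to the unrestricted analogue, and their combined effect produces
\[
\Sigma_A \ll_{F, k} \frac{(\log p)^2}{p (\log R)^2} \cdot \frac{N (\log R)^k}{W}.
\]
In $\Sigma_B$, the CRT factor $1/p$ combines with two further factors of $1/p$ extracted from cancellation in the Möbius structure of the restricted $(\mb d, \mb e)$-sum, yielding
\[
\Sigma_B \ll_{F, k} \frac{1}{p^3} \cdot \frac{N (\log R)^k}{W}.
\]
Finally, $\mathcal E$ is dominated by the trivial bound $|\lambda_{\mb d}| \ll y_{\max} (\log R)^k$ summed over squarefree $\prod_i[d_i, e_i] \le R^2 \ll N^{\theta}$, and is negligible compared to the main bounds.

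The main obstacle will be the sharp bound on $\Sigma_B$: the naive density heuristic predicts only $\Sigma_B \sim \tfrac{1}{p} S_1$, which is far weaker than the claimed $\tfrac{1}{p^3} S_1$. Extracting the additional factor of $1/p^2$ relies on a careful Möbius cancellation argument for the restricted $(\mb d, \mb e)$-sum, in the style of \cite[Lemma~3.2]{VW17} and the earlier work of Pintz \cite{Pi10}.
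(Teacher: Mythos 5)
Your plan has a genuine structural gap: the case decomposition into $\Sigma_A$ (where $p\mid[d_j,e_j]$) and $\Sigma_B$ (where $p$ is coprime to all $[d_i,e_i]$), with each bounded separately, cannot give the stated estimate, because the claimed bound $\Sigma_B\ll p^{-3}N(\log R)^k/W$ is false. After Maynard's diagonalization, the restricted sum $\sum_{\mb d,\mb e:\,p\nmid d_ie_i}\lambda_{\mb d}\lambda_{\mb e}/\prod_i[d_i,e_i]$ is (up to the usual coprimality bookkeeping) an essentially positive quadratic form in variables of the shape $y_{\mb r}+\sum_j y_{p^{\{j\}}\odot\mb r}/(p-1)\asymp F\asymp 1$, so it has size comparable to the unrestricted sum, namely $\asymp_{F,k}\varphi(W)^k(\log R)^k/W^k$. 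Hence $\Sigma_B\asymp_{F,k,D_0}\tfrac1p\cdot N(\log R)^k/W$, which already exceeds the lemma's entire right-hand side $\bigl(\tfrac{(\log p)^2}{p(\log R)^2}+\tfrac1{p^3}\bigr)N(\log R)^k/W$ for all but the largest $p$ (note $\tfrac{(\log p)^2}{p(\log R)^2}\le\tfrac1p$ with a loss of $(\log p/\log R)^2$). The "Möbius cancellation within the restricted $(\mb d,\mb e)$-sum" you invoke to gain the extra $1/p^2$ does not exist: there is no sign structure left to exploit once you have fixed the case $p\nmid d_ie_i$ and diagonalized.

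The actual source of smallness is cancellation \emph{between} your two cases, and the paper's proof is organized precisely so as not to separate them: the conditions $[d_j,e_j]\mid n+h_j$ and $p\mid n+h_j$ are merged into $[d_j,e_j,p]\mid n+h_j$, giving a main term $\tfrac{N}{pW}\sum'\lambda_{\mb d}\lambda_{\mb e}/\bigl(g([d_j,e_j])\prod_{i\ne j}[d_i,e_i]\bigr)$ with the multiplicative function $g(n)=[n,p]/p$; rediagonalizing with respect to $g$ produces new variables
\[
z_{\mb r}=y_{\mb r}-y_{p^{\{j\}}\odot\mb r}+\sum_{i\ne j}\frac{y_{p^{\{i\}}\odot\mb r}}{p-1},
\]
in which the first difference encodes exactly the interference between your Cases A and B. The smoothness of $F$ (mean value theorem) then gives $|z_{\mb r}|\ll_{F,k}\log p/\log R+1/p$, and squaring this, times the CRT density $1/p$ and the usual $(\log R)^k$ from the $\mb u$-sum, yields both terms of the lemma. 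This is also why the remark after the lemma insists that $F$ be smooth on all of $\RR^k$ — a hypothesis your plan never uses, which is a telltale sign the key mechanism is missing. Your treatment of $p\le D_0$, the CRT setup, and the $O(\lambda_{\max}^2R^2(\log R)^{2k})$ error term are all fine and match the paper; the proof fails at the point where the two congruence cases are decoupled.
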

\begin{remark}
The function $F$ here, which satisfies the hypotheses of \cref{prop:maynard-integral-asymptotics}, must in fact be smooth on all of $\RR^k$.
Note we can later choose $F$ to be sufficiently close to the choice in Maynard \cite{Ma15} and Polymath 8b \cite{Po14}, since these smooth functions well-approximate Riemann-integrable functions supported on the simplex.
\end{remark}
\begin{proof}
Note that if $p\le D_0$ then the sum is in fact empty by definition of $v_0$. We now assume $p > D_0$. Without loss of generality, let $j = 1$. By definition, we have
\begin{align*}
S_{1,p}^{(1)} &= \sum_{\substack{N<n\le 2N\\n\equiv v_0\pmod{W}\\p|n+h_1}}\left(\sum_{d_i|n+h_i}\lambda_{\mb{d}}\right)^2
= \sum_{\mb{d}, \mb{e}}\lambda_{\mb{d}}\lambda_{\mb{e}}\sum_{\substack{N<n\le 2N\\n\equiv v_0\pmod{W}\\ [d_j,e_j]|n+h_j\forall j\\p|n+h_1}}1\\
&= \frac{N}{pW}{\sum_{\mb{d}, \mb{e}}}'\frac{\lambda_{\mb{d}}\lambda_{\mb{e}}}{\frac{[d_1,e_1,p]}{p}\prod_{j=2}^k[d_j,e_j]}+O\Bigl(\lambda_{\max}^2R^2(\log R)^{2k}\Bigr),
\end{align*}
where $\sum'$ indicates that we are restricting the sum such that $p\nmid d_j, e_j$ for $2\le j\le k$ and $\gcd(d_i, e_j) = 1$ for all $i\not= j$. 
We do this because it can be checked that these are precisely the conditions (beyond the support restrictions on $\lambda_{\mb{d}}$) necessary to have the inner sum furthest to the right of the top line be nonempty and resolve into a single residue class. We are implicitly using the lower bound on $D_0$.

Since $R = N^{\theta/2-\delta}$ and $\theta < 1/2$, while $\lambda_{\max}\ll_{F,k} (\log N)^k$ as in Maynard \cite{Ma15}, this error term is negligible. Let the sum in the main term be denoted $T_{1, p}^{(1)}$.
Now define $g(n)$, supported only on squarefree integers, such that $g(n) = [n, p]/p$ on those values. 
We can easily check that $g$ is multiplicative, and $g(\ell) = 1 + (\ell - 1)\one_{\ell \neq p}$ for prime $\ell$. 
Hence define $\widetilde{g}(\ell) := (\ell - 1)\one_{\ell \neq p}$ for prime $\ell$, extending via multiplicativity and letting it be zero on non-squarefree integers. We see that $\widetilde{g}(n)\le\varphi(n)$ always, and that if $n$ is squarefree, then $g(n) = \sum_{d|n} \widetilde{g}(d)$.
Thus, since the support is restricted to squarefree $d_i$ (with bounded product $\prod_{i=1}^k d_i\leq R$ and relatively prime to $W$), we see
\begin{align*}
T_{1, p}^{(1)} &= {\sum_{\mb{d}, \mb{e}}}'\frac{\lambda_{\mb{d}}\lambda_{\mb{e}}}{g([d_1,e_1])\prod_{j=2}^k[d_j,e_j]}\\
&= {\sum_{\mb{d}, \mb{e}}}'\frac{\lambda_{\mb{d}}\lambda_{\mb{e}}}{g(d_1)g(e_1)\prod_{j=2}^k(d_je_j)}\sum_{u_i|d_i, e_i}\widetilde{g}(u_1)\prod_{j = 2}^k\varphi(u_j)\\
&= \sum_{\mb{u}}\widetilde{g}(u_1)\prod_{j=2}^k\varphi(u_j){\sideset{}{'}\sum_{u_i|d_i, e_i}}\frac{\lambda_{\mb{d}}\lambda_{\mb{e}}}{g(d_1)g(e_1)\prod_{j=2}^k(d_je_j)}\\
&= \sum_{\mb{u}}\widetilde{g}(u_1)\prod_{j=2}^k\varphi(u_j)\sum_{s_{1,2},\ldots,s_{k,k-1}}\prod_{\substack{1\le i, j\le k\\i\neq j}}\mu(s_{i,j})
\sideset{}{^\dag}\sum_{\substack{\mb{d}, \mb{e}\\u_i|d_i,e_i\forall i\\s_{i,j}|d_i,e_j\forall i\neq j}}
\frac{\lambda_{\mb{d}}\lambda_{\mb{e}}}{g(d_1)g(e_1)\prod_{j=2}^k(d_je_j)},
\end{align*}
using the same technique as in Maynard \cite{Ma15}. Here $\sum^\dag$ means that only the condition $p\nmid d_j, e_j$ for $2\le j\le k$ is maintained.

We now restrict the $s_{i, j}$ to be coprime to $u_i$ and $u_j$, since terms with $s_{i,j}$ not coprime to $u_i$ or $u_j$ give zero by the support restrictions. Similarly, we can restrict to $s_{i, j}$ coprime to $s_{i, a}, s_{b, j}$ when $a\neq j$ and $b\neq i$. Furthermore, note that $p\nmid d_j$ for $2\le j\le k$ means that all $p\nmid s_{i,j}$, so we restrict in this way as well. Denote summation over $s_{i,j}$ with these restrictions by $\sum^\ast$. Furthermore, since $\widetilde{g}(u_1) = 0$ when $p|u_1$, and since $p\nmid u_j$ for $2\le j\le k$ by the other restrictions above, we can restrict the summation over $u$ so that $p\nmid u_i$ for all $1\le i\le k$.
This allows us to compute $T_{1,p}^{(1)}$ as follows.
\begin{align}\label{eq:T1p1_expanded}
T_{1,p}^{(1)} &= \sum_{\substack{\mb{u}\\ p\nmid u_i}} \widetilde{g}(u_1)\prod_{j=2}^k\varphi(u_j){\sideset{}{^\ast}\sum_{s_{1,2},\ldots,s_{k,k-1}}}\prod_{\substack{1\le i, j\le k\\i\neq j}}\mu(s_{i,j}){\sideset{}{^\dag}\sum_{\substack{\mb{d}, \mb{e}\\u_i|d_i,e_i\forall i\\s_{i,j}|d_i,e_j\forall i\neq j}}}\frac{\lambda_{\mb{d}}\lambda_{\mb{e}}}{g(d_1)g(e_1)\prod_{j=2}^k(d_je_j)},\nonumber\\
&= \sum_{\substack{\mb{u}\\ p\nmid u_i}}\widetilde{g}(u_1)\prod_{j=2}^k\varphi(u_j){\sideset{}{^\ast}\sum_{s_{1,2},\ldots,s_{k,k-1}}}\left(\prod_{\substack{1\le i, j\le k\\i\neq j}}\mu(s_{i,j})\right)\frac{\mu(a_1)\mu(b_1)}{\widetilde{g}(a_1)\widetilde{g}(b_1)}\prod_{j=2}^k\frac{\mu(a_j)\mu(b_j)}{\varphi(a_j)\varphi(b_j)}z_{\mb{a}}z_{\mb{b}},\nonumber\\
&= \sum_{\substack{\mb{u}\\ p\nmid u_i}}\prod_{i=1}^k\varphi(u_i){\sideset{}{^\ast}\sum_{s_{1,2},\ldots,s_{k,k-1}}}\left(\prod_{\substack{1\le i,j\le k\\i\neq j}}\mu(s_{i,j})\right)\prod_{j=1}^k\frac{\mu(a_i)\mu(b_i)}{\varphi(a_i)\varphi(b_i)}z_{\mb{a}}z_{\mb{b}},
\end{align}
where $a_i = u_i\prod_{j\neq i}s_{i,j}$, $b_i = u_i\prod_{j\neq i}s_{j,i}$, and where we define the quantity
\[z_{\mb{r}} := \mu(r_1)\widetilde{g}(r_1)\Bigg(\prod_{j=2}^k\mu(r_j)\varphi(r_j)\Bigg)\sum_{\substack{r_i|d_i\\p\nmid d_j, \forall j\ge 2}}\frac{\lambda_{\mb{d}}}{g(d_1)\prod_{j = 2}^k d_j},\]
which is supported only on $\mb{r}$ such that $r_1\cdots r_k$ is less than $R$, squarefree, and coprime to $pW$.

Note that division by factors of $\widetilde{g}$ (which are potentially zero) is not invalid here, as we have restricted summation appropriately. Additionally, we used that $\widetilde{g}$ and $\varphi$ agree on squarefree numbers relatively prime to $p$. This also implies that if $p\nmid r_1$, in $z_{\mb{r}}$ we can turn the $\widetilde{g}$ into a $\varphi$ without consequence.

Now we bound the remaining sum \cref{eq:T1p1_expanded} by
\begin{align*}
|T_{1,p}^{(1)}|\leq z_{\max}^2\left(\sum_{\substack{u<R\\(u,pW)=1}}\frac{\mu(u)^2}{\varphi(u)}\right)^k\left(\sum_{\substack{s\ge 1\\p\nmid s}}\frac{\mu(s)^2}{\varphi(s)^2}\right)^{k^2-k}
\ll_k z_{\max}^2(\log R)^k,
\end{align*}
where $z_{\max} = \sup_{\mb{r}}|z_{\mb{r}}|$. Now we compute $z$ in terms of $r$, recalling that we chose $\lambda$ in the proof of \cref{prop:maynard-integral-asymptotics} by choosing
\[y_{\mb{r}} = \Biggl(\prod_{i=1}^k\mu(r_i)\varphi(r_i)\Biggr)\sum_{\mb{d}:\,r_i|d_i}\frac{\lambda_{\mb{d}}}{\prod_{i=1}^kd_i}\]
to be
\[y_{r_1, \ldots, r_k} = F\biggl(\frac{\log r_1}{\log R},\ldots,\frac{\log r_k}{\log R}\biggr)\]
for $\mb{r}$ such that $r_1\cdots r_k$ is at most $R$, squarefree, and relatively prime to $W$; $y_{\mb{r}} = 0$ otherwise. Here $F$ is our chosen smooth function. For $1\le j\le k$ define 
\begin{align*}
\alpha_{\mb{r},j} = \Biggl(\prod_{i=1}^k\mu(r_i)\varphi(r_i)\Biggr)\sum_{\substack{r_i|d_i\\p\nmid d_i}}\frac{\lambda_{p^{\{j\}}\odot\mb{d}}}{\prod_{i=1}^kd_i}, \qquad \alpha_{\mb{r},0} = \Biggl(\prod_{i=1}^k\mu(r_i)\varphi(r_i)\Biggr)\sum_{\substack{r_i|d_i\\p\nmid d_i}}\frac{\lambda_{\mb{d}}}{\prod_{i=1}^kd_i},
\end{align*}
where $p^{\{j\}}\odot \mb d$ is the vector of length $k$ with coordinates $(d_1,\ldots,d_{j-1},pd_j,d_{j+1},\ldots,d_k)$. We compute for $1\le j\le k$ that
\[\alpha_{\mb{r},j} = -\frac{p}{p-1}y_{p^{\{j\}}\odot\mb{r}}, \qquad \alpha_{\mb{r},0} = y_{\mb{r}} + \sum_{j=1}^k \frac{y_{p^{\{j\}}\odot\mb{r}}}{p-1}.\]
Noting that $g$ is the identity for squarefree numbers not divisible by $p$, we now have for $p\nmid r_1\cdots r_k$ that
\begin{align*}
z_{\mb{r}} 
= \Biggl(\prod_{i=1}^k\mu(r_i)\varphi(r_i)\Biggr)\left(\sum_{\substack{r_i|d_i\\p\nmid d_i}}\frac{\lambda_{\mb{d}}}{\prod_{i=1}^kd_i} + \sum_{\substack{r_i|d_i\\p\nmid d_i}}\frac{\lambda_{pd_1,d_2,\ldots,d_k}}{\prod_{i=1}^kd_i}\right)
= \alpha_{\mb{r},0} + \alpha_{\mb{r},1} = y_{\mb r} - y_{p^{\{1\}}\odot \mb r} + \sum_{j=2}^k \frac{y_{p^{\{j\}}\odot \mb r}}{p-1}.
\end{align*}

We now return to the bounding. Since $F$ is smooth and compactly supported, by the mean value theorem we obtain
\begin{align*}
    z_{\mb{r}}
    \le \biggl(\frac{\log p}{\log R}\biggr)\Biggl(\sup_{[0,1]^k} |\nabla F|\Biggr) + \frac{k-1}{p-1}\Biggl(\sup_{[0,1]^k}F\Biggr)\ll_{F,k}\frac{\log p}{\log R} + \frac{1}{p}.
\end{align*}
\[\]
Hence, our sum is bounded in absolute value as
\begin{align*}
T_{1,p}^{(1)}\ll_{F,k}z_{\max}^2(\log R)^k\ll_{F,k}\biggl(\frac{(\log p)^2}{(\log R)^2}+\frac{1}{p^2}\biggr)(\log R)^k.
\end{align*}
Thus our original expression is bounded as
\[S_{1,p}^{(1)}\ll_{F,k}\biggl(\frac{(\log p)^2}{p(\log R)^2} + \frac{1}{p^3}\biggr)\frac{N(\log R)^k}{W},\]
as desired.
\end{proof}

The next proposition estimates the part of the difference $\widehat{S_2}-\rho S_1$ for which $\prod_{i=1}^k(n+h_i)$ has large prime factors, which we will see comprises the dominant behavior.

\begin{proposition}\label{prop:average-Pintz-condition}
Assume the notation and hypotheses from \cref{lem:bound-on-S1pj}. Let $m$ be a positive integer such that $k\ge C\exp((2m)/(b_0\theta))$, where the constant $C$ is absolute. Then there exists a choice of $D_0$ only depending on $\mc{H}$ and $k$ and a choice of $c_1(k)$ sufficiently small such that
\[
\widehat{S_2}^+ (N, \mc P_I) - m S_1^+(N, \mc P_I) \gg_k N(\log R)^k,
\]
where $S_1^+ := S_1 - S_1^-$ and $\widehat{S_2}^+ = \widehat{S_2} - \widehat{S_2}^-$.
\end{proposition}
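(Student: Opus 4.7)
The plan is to decompose $\widehat{S_2}^+ - m S_1^+ = (\widehat{S_2} - m S_1) + (m S_1^- - \widehat{S_2}^-)$ and exploit the nonnegativity of $m S_1^-$ to reduce to showing a positive lower bound $\widehat{S_2} - mS_1 \gg_k N(\log R)^k$ that dominates an upper bound of the form $\widehat{S_2}^- \ll c_1(k)^2\, N(\log R)^k$; for $c_1(k)$ chosen small enough (depending on $k$, $\mc H$, $u_-$, and $F$) the conclusion then follows.

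For the main term, I will apply \cref{prop:maynard-integral-asymptotics} with $D_0$ taken to be a large constant depending on $k$ and $\mc{H}$ (as permitted by the remark after that proposition), and choose $F$ to be a smooth function supported on the open simplex approximating a near-optimizer of the variational problem defining $M_k$, so that $\sum_{m'=1}^k J_k^{(m')}(F) \geq (M_k - \eps)\,I_k(F)$ for a prescribed small $\eps > 0$. Using $\log R/\log N = \theta/2 - \delta$, the asymptotics of $S_1$ and $\widehat{S_2}$ yield
\[
\widehat{S_2} - mS_1 \geq \frac{(1+o(1))\,\varphi(W)^k N(\log R)^k}{W^{k+1}}\, I_k(F)\bigl[b_0(\theta/2-\delta)(M_k - \eps) - m\bigr].
\]
Invoking the Polymath 8b bound $M_k \geq \log k - C_0$ for $k \geq C_0$, the hypothesis $k \geq C\exp(2m/(b_0\theta))$ with the absolute constant $C$ chosen large enough to absorb $C_0$, $\eps$, and the $\delta$-correction makes the bracket a positive constant, giving $\widehat{S_2} - mS_1 \gg_k N(\log R)^k$.

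For the error term, I will control $S_1^-$ by a union bound over $j \in \{1,\ldots,k\}$ and primes $D_0 < p < (2N)^{c_1(k)}$, applying \cref{lem:bound-on-S1pj}:
\[
S_1^- \leq \sum_{j=1}^k \sum_{D_0 < p < (2N)^{c_1(k)}} S_{1,p}^{(j)} \ll_{F,k} k\biggl(\frac{1}{(\log R)^2}\sum_{p \leq (2N)^{c_1(k)}}\frac{(\log p)^2}{p} + \sum_p \frac{1}{p^3}\biggr)\frac{N(\log R)^k}{W}.
\]
The Mertens-type estimate $\sum_{p \leq X}(\log p)^2/p \ll (\log X)^2$, combined with $\log R \asymp \log N$, converts the first inner sum into $O(c_1(k)^2)$, while the second is $O(1)$; thus $S_1^- \ll_{F,k,\mc H} c_1(k)^2\, N(\log R)^k$. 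Since $\one_\PP \leq 1$ and $u_-$ is uniformly bounded on $[-1,1]$, one has $\widehat{S_2}^- \leq k\,\|u_-\|_\infty\, S_1^-$, so the same type of bound holds for $\widehat{S_2}^-$. Choosing $c_1(k)$ small enough to beat the positive constant from the previous step then completes the argument.

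The main obstacle is managing the interplay of constants: the implicit constant in $\widehat{S_2} - mS_1 \gg_k N(\log R)^k$ depends delicately on the slack in Polymath's estimate $M_k \geq \log k - C_0$ and on the approximation parameters $\eps$ and $\delta$, so the absolute constant $C$ in the hypothesis must be arranged to swallow these losses before $c_1(k)$ can be fixed to render the error term strictly smaller. No new analytic ingredient beyond \cref{prop:maynard-integral-asymptotics} and \cref{lem:bound-on-S1pj} is required.
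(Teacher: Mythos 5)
Your overall route is the same as the paper's: write $\widehat{S_2}^+ - mS_1^+ \ge (\widehat{S_2}-mS_1) - |\widehat{S_2}^-|$, get the positive main term from \cref{prop:maynard-integral-asymptotics} (with $D_0$ a constant, $F$ near-optimal, and the Polymath bound $M_k\ge \log k - C_0$), bound the bad part prime-by-prime via \cref{lem:bound-on-S1pj} and Mertens-type sums, and finally shrink $c_1(k)$. However, there is a genuine gap in your error estimate. After applying \cref{lem:bound-on-S1pj} you sum the $1/p^3$ contribution over \emph{all} primes, call it $O(1)$, and then conclude $S_1^-\ll_{F,k,\mc H} c_1(k)^2 N(\log R)^k$. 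That conclusion does not follow: the $O(1)$ piece does not shrink as $c_1(k)\to 0$, so what your display actually gives is $S_1^-\ll_{F,k}(c_1(k)^2+1)\,N(\log R)^k/W$. This is not small enough: the main term $\widehat{S_2}-mS_1$ has size of order $\varphi(W)^k N(\log R)^k/W^{k+1}$, which is smaller than $N(\log R)^k/W$ by the factor $(W/\varphi(W))^k\asymp_k(\log D_0)^k$, so with your bound the error can swamp the main term and no choice of $c_1(k)$ rescues it. The repair is precisely the mechanism the paper's proof isolates and which the phrase ``there exists a choice of $D_0$'' in the statement is for: since $n\equiv v_0\pmod W$ forces $(n+h_j,W)=1$, one has $S_{1,p}^{(j)}=0$ for all $p\le D_0$, hence the cubic sum is $\sum_{D_0<p}p^{-3}\ll D_0^{-2}$; one then chooses $D_0=D_0(k,\mc H)$ so large that $D_0^{-2}(\log D_0)^k$ is negligible against the main-term constant, and only afterwards takes $c_1(k)$ small to kill the $(c_1(k)\log N/\log R)^2$ piece. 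You note at the outset that $D_0$ may be taken to be a large constant, but you never use that freedom at the one place it is needed.

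A secondary bookkeeping point, which your proposal inherits from the terseness of the statement itself: with $R=N^{\theta/2-\delta}$ the bracket is $b_0(\theta/2-\delta)(M_k-\eps)-m$, and the loss $2m\delta/\theta$ grows with $m$, so it cannot be absorbed into an absolute constant $C$ as you suggest; one must instead take $\delta$ (and the slack $\eps$ in the choice of $F$) small in terms of $m$, $b_0$, $\theta$, which is legitimate since these are free parameters of the construction. Your reduction $|\widehat{S_2}^-|\le k\,(\sup_{[-1,1]}|u_-|)\,S_1^-$ and the union bound $S_1^-\le\sum_j\sum_{p<(2N)^{c_1(k)}}S_{1,p}^{(j)}$ are fine.
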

\begin{proof}
This follows from \cref{lem:bound-on-S1pj} and the remark following \cref{prop:maynard-integral-asymptotics}. We use the same arguments as in {\cite[Lemmas~5.2,~5.3,~and~5.4]{VW17}}, the key points being that
\[\sum_{p<n^{c_1(k)}}\frac{(\log p)^2}{p(\log R)^2}\ll\frac{(c_1(k)\log n)^2}{(\log R)^2},\qquad\sum_{D_0 < p < n^{c_1(k)}}\frac{1}{p^3}\ll\frac{1}{D_0^2},\qquad\frac{W}{\phi(W)} = O(\log D_0),\]
and that $S_{1,p}^{(j)} = 0$ for all $j$ and $p\le D_0$.
Working out the constants, this means that we can choose an appropriately large $D_0$, depending only on $k$ and $\mc H$, and then can take $c_1(k)$ small enough to ensure that the contribution from $S_1^-$, and hence also $\widehat{S_2}^-$, is small.
\end{proof}

\begin{proposition}\label{prop:Pintz-condition}
Given $k$ and $\mc{H}$, there exists $c_1(k)$ chosen sufficiently small so that the following holds: if we define
\[S_{\mc{P}_I}(\mc{H}) := \biggl\{n\in\NN:\emph{ at least }m+1\text{ of }n+h_i\emph{ are in }\mc{P}_I, P^-\Biggl(\prod_{i=1}^k(n+h_i)\Biggr)\ge n^{c_1(k)}\biggr\},\]
then we have
\[\#\{n\le x: n\in S_{\mc{P}_I}(\mc{H})\}\ge c_{\mc P_I}(k,\mc H)x(\log x)^{-k}\]
for some constant $c_{\mc{P}_I}(k, \mc H)$.
\end{proposition}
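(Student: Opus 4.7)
The plan is to convert the weighted lower bound from \cref{prop:average-Pintz-condition} into an unweighted count of $n \in S_{\mc P_I}(\mc H)$. By construction,
\[
\widehat{S_2}^+(N, \mc P_I) - m S_1^+(N, \mc P_I) = \sum_{\substack{N < n \leq 2N,\ n \equiv v_0 \pmod{W} \\ P^-(\prod_i (n+h_i)) \ge n^{c_1(k)}}} \Biggl(\sum_{i=1}^k \one_{\PP}(n+h_i)\, u_-(\cos\theta_{n+h_i}) - m\Biggr) w_n,
\]
and this quantity is $\gg_k N(\log R)^k$. Since $u_-(\cos\theta) \le \one_I(\cos\theta)$ pointwise on $[-1, 1]$, the parenthesized expression is at most $\#\{i : n+h_i \in \mc P_I\} - m$. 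In particular, since $w_n \ge 0$, only those $n$ with at least $m + 1$ of the $n + h_i$ lying in $\mc P_I$, namely $n \in S_{\mc P_I}(\mc H)$, contribute positively, and each such term is bounded above by $(k - m)\, w_n$.

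It then remains to establish a suitable pointwise upper bound on $w_n$ valid on $S_{\mc P_I}(\mc H)$. The $P^-$ condition forces $\omega(n+h_i) \le 1/c_1(k)$ for each $i$, so each $n + h_i$ has at most $2^{1/c_1(k)}$ squarefree divisors, and hence the inner sum defining $w_n$ ranges over $O_k(1)$ tuples. Combined with Maynard's bound $\lambda_{\max} \ll_{F, k} (\log R)^k$ (used already in the proof of \cref{prop:maynard-integral-asymptotics}), this yields $w_n \ll_{F, k, c_1(k)} (\log R)^{2k}$ uniformly for $n \in S_{\mc P_I}(\mc H)$.

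Putting the two steps together,
\[
N(\log R)^k \ll_k (k - m)\, (\log R)^{2k}\, \#\{n \in S_{\mc P_I}(\mc H) \cap (N, 2N]\},
\]
which rearranges to $\#\{n \in S_{\mc P_I}(\mc H) \cap (N, 2N]\} \gg_{F, k, \mc H} N (\log N)^{-k}$. Taking $N = x/2$ (or summing dyadically in $N$) immediately gives the desired lower bound on $\#\{n \le x : n \in S_{\mc P_I}(\mc H)\}$, with a constant $c_{\mc P_I}(k, \mc H)$ depending only on $k$ and $\mc H$ (since $F$ and hence $c_1(k)$ are fixed once $k, \mc H$ are). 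The main (and essentially only) subtlety is the pointwise control of $w_n$: without the $P^-$ restriction, the weights can spike on integers with many small prime factors, and one cannot translate the weighted lower bound on $\widehat{S_2}^+ - m S_1^+$ into an individual count.
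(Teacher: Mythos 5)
Your proposal is correct and follows essentially the same route as the paper's proof: drop the nonpositive terms (which, since $u_-\le\one_I$, are exactly those $n$ with fewer than $m+1$ of the $n+h_i$ in $\mc P_I$), use the $P^-$ condition to bound the number of divisor tuples and hence $w_n\ll_{F,k,c_1(k)}\lambda_{\max}^2\ll(\log R)^{2k}$, and then invoke \cref{prop:average-Pintz-condition}. The paper phrases this via an auxiliary set $S'_{\mc P_I}(\mc H)$ and the upper bound $k$ on the parenthesized sum rather than $(k-m)$, but the argument is the same (only a trivial note: the final constant may also depend on $\mc P_I$ through $\theta$ and the minorant, which the statement's $c_{\mc P_I}(k,\mc H)$ already allows).
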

\begin{proof}
Note first that
\[
\sum_{i = 1}^k \one_\PP (n + h_i) u_-(\cos \theta_{n + h_i}) - m \leq k - m \leq k
\]
for all positive integers $n$. Define \[
S'_{\mc{P}_I}(\mc{H}) = \biggl\{n\in\NN: \sum_{i = 1}^k \one_\PP (n + h_i) u_-(\cos \theta_{n + h_i}) - m > 0 \text{ and } P^-\Biggl(\prod_{i=1}^k (n+h_i)\Biggr) \ge n^{c_1(k)}\biggr\}.\]
By the definition of $u_-$, the condition $\sum_{i = 1}^k \one_\PP (n + h_i) u_-(\cos \theta_{n + h_i}) - m >0$ implies that for each $n\in S_{\mc P_I}'(\mc H)$, at least $(m+1)$ of $(n+h_i)$ are in $\mc P_I$, so
$S_{\mc P_I}'(\mc H)\subseteq S_{\mc P_I}(\mc H)$.
Summing the previous inequality for different values of $n$, we get
\begin{equation}\label{eq:bound-sum-of-weights}
\sum_{\substack{N < n \leq 2N \\ n \in S'_{\mc P_I}(\mc H)}} \Biggl(\sum_{i = 1}^k \one_\PP (n + h_i) u_-(\cos \theta_{n + h_i}) - m \Biggr) 
\leq k \sum_{\substack{N < n \leq 2N \\ n \in S_{\mc P_I}(\mc H)}} 1.
\end{equation}

Now suppose $n$ is such that $P^-\bigl(\prod_{i=1}^k (n + h_i)\bigr) \geq n^{c_1(k)}$ (e.g., this is true for $n \in S'_{\mc P_I}$). Then each $n + h_i$ has all prime factors bounded below by $q_1, \ldots, q_{r_i} \geq n^{c_1(k)}$, so that $n + h_i \geq q_1\cdots q_{r_i} \geq n^{r_i c_1(k)}$; this puts an upper bound of the number of prime factors $r_i$ of $n + h_i$, depending only on $c_1(k)$ and $\mc H$. This yields
\[
\left(\sum_{d_i \mid n + h_i \forall i} \lambda_{\mb{d}}\right)^2 \ll_{c_1(k), \mc H} \lambda_{\max}^2.
\]
But by our previous choice of $\lambda$ in \cref{prop:maynard-integral-asymptotics} we have
\[
\lambda_{\max}\ll_k\Biggl(\sup_{[0,1]^k} F\Biggr)(\log R)^k \ll_{F,k} (\log R)^k.
\]
Hence, we obtain
\[
1 \gg_{k, \mc H} \frac{1}{(\log R)^{2k}} \left(\sum_{d_i \mid n + h_i \forall i} \lambda_{\mb{d}}\right)^2
\]
for all $n$ such that $n + h_i$ has prime factors $\geq n^{c_1(k)}$. Combining this with our previous bound from \cref{eq:bound-sum-of-weights}, we have that
\begin{align*}
\sum_{\substack{N < n \leq 2N \\ n \in S_{\mc P_I}(\mc H)}} 1 
&\gg_{F,k,\mc H}
\frac{1}{(\log R)^{2k}} \sum_{\substack{N < n \leq 2N \\ n \in S'_{\mc P_I}(\mc H)}} \Biggl( \sum_{i = 1}^k \one_\PP (n + h_i) u_-(\cos \theta_{n + h_i}) - m \Biggr) \left(\sum_{d_i \mid n + h_i \forall i} \lambda_{\mb{d}}\right)^2 \\
&\geq \frac{1}{(\log R)^{2k}} \sum_{\substack{N < n \leq 2N \\ n \equiv v_0 \pmod{W} \\ P^-\bigl(\prod_{i=1}^k(n+h_i)\bigr)\ge n^{c_1(k)}}} \Biggl( \sum_{i = 1}^k \one_\PP (n + h_i) u_-(\cos \theta_{n + h_i}) - m \Biggr) \left(\sum_{d_i \mid n + h_i \forall i} \lambda_{\mb{d}}\right)^2 \\
&= \frac{1}{(\log R)^{2k}} \Bigl(\widehat{S_2}^+ - mS_1^+\Bigr).
\end{align*}
The second inequality above follows directly from the characterization of $S'_{\mc P_I}(\mc H)$, since the integers $N < n \leq 2N$ with $P^-\bigl(\prod_{i=1}^k(n+h_i)\bigr)\ge n^{c_1(k)}$ that do not lie in $S'_{\mc P_I}(\mc H)$ have a nonpositive contribution to the latter sum. Now we are in a position to apply \cref{prop:average-Pintz-condition}, which gives that
\[
\sum_{\substack{N < n \leq 2N \\ n \in S_{\mc P_I}(\mc H)}} 1 \gg_{F,k,\mc H} \frac{1}{(\log R)^{2k}} N(\log R)^k = \frac{N}{(\log R)^k}, 
\]
for our large enough (chosen) value of $k$. Recall that $R = N^{\theta/2 - \delta}$, so $\log R = (\theta/2 - \delta)\log N$ for some small $\delta$. Hence we for our choice of $F$ (which depends only on $k$) we can write
\[
\#\{n \leq x : n \in S_{\mc P_I}(\mc H)\} \gg_{k, \mc H, \mc P_I} x(\log x)^{-k},
\]
completing our proof.
\end{proof}

Now we can prove our main result on a combination of bounded gaps with Green--Tao theorem, generalizing \cref{thm:simplified-green-tao}.

\begin{theorem}\label{thm:green-tao-sato-tate}
Suppose that an interval $I \subseteq [-1, 1]$ is $\Sym^{\ell_{\max}}$-minorizable, for some $\ell_{\max} \geq 0$, by a polynomial with average $b_0$ against $\mu_{ST}$.
Let $\mc{H} = \{h_1,\ldots,h_k\}$ be an admissible set. 
Let $E/\QQ$ be a non-CM elliptic curve, and
suppose that $L(s,\Sym^\ell E)$ is automorphic for all $0 \le \ell \le \ell_{max}$. Define $\widetilde\theta := 1/\max(2,\ell_{\max}-1)$, and let $m$ be a positive integer with
\[
k\ge C\exp\biggl(\frac{2m}{b_0\widetilde\theta}\biggr),
\]
where $C > 0$ is an absolute constant. Then there exists an $(m+1)$-element subset $\{h_1',\ldots,h_{m+1}'\}$ of $\mc{H}$ with the following property: for every positive integer $t$, there exist infinitely many nontrivial $t$-term arithmetic progressions of integers $n$ such that $n+h_i'\in \mc{P}_I$ for all $1\le i\le m+1$.
\end{theorem}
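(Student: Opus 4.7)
The plan is to deduce the conclusion from \cref{prop:Pintz-condition} and Pintz's \cref{thm:Pintz}, bridging them via a pigeonhole step that selects the $(m+1)$-element subset $\{h_1',\ldots,h_{m+1}'\} \subseteq \mc{H}$; the approach follows the strategy of Vatwani--Wong \cite{VW17}, replacing their Chebotarev inputs with our Sato--Tate estimates.

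First, I would verify that the hypothesis of \cref{prop:Pintz-condition} can be met. That proposition requires $k \ge C\exp\bigl(2m/(b_0\theta)\bigr)$ for some $0 < \theta < \widetilde\theta$, while we are given the stronger bound $k \ge C\exp\bigl(2m/(b_0\widetilde\theta)\bigr)$. Imitating the proof of \cref{thm:bounded-gaps-sato-tate}, I would set $\theta := (1 - 1/\log k)\widetilde\theta$ and enlarge the absolute constant $C$ if necessary so that the required inequality holds. \cref{prop:Pintz-condition} then yields a constant $c_1(k) > 0$ and a set $S_{\mc{P}_I}(\mc{H})$ whose every element $n$ satisfies $P^-\bigl(\prod_{i=1}^k (n+h_i)\bigr) \ge n^{c_1(k)}$, and for which $\#\{n \le x : n \in S_{\mc{P}_I}(\mc{H})\} \gg x(\log x)^{-k}$.

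Next I would run a pigeonhole argument over $(m+1)$-element subsets of $\mc{H}$. By the definition of $S_{\mc{P}_I}(\mc{H})$, each $n$ in this set admits some $(m+1)$-subset $J^*(n) \subseteq \{1,\ldots,k\}$ with $n + h_i \in \mc{P}_I$ for every $i \in J^*(n)$. The sets $A_J := \{n \in S_{\mc{P}_I}(\mc{H}) : J^*(n) = J\}$ partition $S_{\mc{P}_I}(\mc{H})$ into $\binom{k}{m+1}$ pieces, so comparing upper densities against $x(\log x)^{-k}$, some fixed $J_0$ achieves $|A_{J_0} \cap [1,x]| \gg x(\log x)^{-k}$ along an infinite sequence of $x \to \infty$. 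With $J_0$ thus fixed, I would invoke \cref{thm:Pintz} with the admissible set $\mc{H}$ and $S(\mc{H}) := A_{J_0}$: the prime-factor hypothesis is inherited from $S_{\mc{P}_I}(\mc{H})$, and the density hypothesis is the outcome of the pigeonhole. Writing $J_0 = \{i_1,\ldots,i_{m+1}\}$ and setting $h_j' := h_{i_j}$, each $t$-term AP inside $A_{J_0}$ produced by Pintz's theorem then satisfies $n + h_j' \in \mc{P}_I$ for all $1 \le j \le m+1$, giving the conclusion.

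The trickiest point I expect is matching the density output of the pigeonhole to the exact hypothesis of \cref{thm:Pintz}: the pigeonhole naturally produces density along a subsequence, whereas the theorem is stated for all $x$. I anticipate resolving this either by appealing to the underlying Green--Tao transference step (which only requires positive upper relative density to output infinitely many $t$-APs) or by reorganizing the pigeonhole, for instance by reweighting each $n$ by $|J^*(n) \cap J|$ for a fixed $J$ of size $m+1$ and averaging over $J$, so that the best $J_0$ inherits a density bound uniform in $x$.
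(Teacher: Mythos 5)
Your overall strategy (feed \cref{prop:Pintz-condition} into \cref{thm:Pintz} and fix the $(m+1)$-subset by pigeonhole) is close in spirit to the paper, and your first paragraph correctly handles the choice of $\theta<\widetilde\theta$. But the order in which you do the two steps creates a genuine gap that you flag but do not close. You pigeonhole \emph{first}, splitting $S_{\mc P_I}(\mc H)$ into the classes $A_J$ and fixing a $J_0$, and only then apply \cref{thm:Pintz} to $A_{J_0}$. The pigeonhole only guarantees $|A_{J_0}\cap[1,x]|\gg x(\log x)^{-k}$ along some sequence $x\to\infty$: since the winning $J$ may rotate between scales, no single $J_0$ need satisfy the density hypothesis of \cref{thm:Pintz}, which is stated for all $x$, and monotonicity of $|A_{J_0}\cap[1,x]|$ does not rescue you when consecutive good scales are far apart. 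Your proposed repairs do not fill this: appealing to ``the underlying Green--Tao transference step'' amounts to asserting an upper-density version of Pintz's theorem, i.e.\ modifying the cited black box rather than using it (plausible, but it would have to be proved or located in the literature), and the reweighting by $|J^*(n)\cap J|$ does not change the fact that for each $x$ a different subset can carry the mass, so it still only yields subsequence density for a fixed $J_0$.

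The paper sidesteps this entirely by reversing the order: it applies \cref{thm:Pintz} to the full set $S_{\mc P_I}(\mc H)$, for which \cref{prop:Pintz-condition} gives the density bound at every scale, obtaining arbitrarily long arithmetic progressions $n_1<\cdots<n_M$ in $S_{\mc P_I}(\mc H)$; it then colors each $n_i$ by a witnessing $(m+1)$-subset of $\mc H$ and invokes van der Waerden's theorem (with $M=M(t,m,k)$ large) to extract a monochromatic $t$-term sub-progression, finally choosing $\mc H'$ to be a color occurring for infinitely many $t$. That finite-coloring/van der Waerden step is exactly the missing ingredient in your argument: it lets one fix the subset \emph{after} the long progressions have been produced, so no density statement about any individual class $A_{J_0}$ is ever needed. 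If you want to keep your pigeonhole-first architecture, you must first establish a version of \cref{thm:Pintz} whose density hypothesis is only positive relative upper density; as written, your invocation of \cref{thm:Pintz} for $A_{J_0}$ is not justified.
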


\begin{proof}
By \cref{prop:Pintz-condition}, the set
\[S_{\mc{P}_I}(\mc{H}) := \biggl\{n\in\NN:\text{ at least }m+1\text{ of }n+h_i\text{ are in }\mc{P}_I, P^-\Biggl(\prod_{i=1}^k(n+h_i)\Biggr)\ge n^{c_1(k)}\biggr\}\]
satisfies the conditions in \cref{thm:Pintz}, hence there exist arithmetic progressions in $S_{\mc{P}_I}(\mc{H})$ of arbitrary length.

Let $n_1<n_2<\ldots<n_{M}$ be an arithmetic progression in $S_{\mc{P}_I}(\mc{H})$, where $M$ is a positive integer to be determined. By definition of $S_{\mc{P}_I}(\mc{H})$, for each $n_i$, there exists $h_{i_1},\ldots,h_{i_{m+1}} \in \mc H$ such that $n_i + h_{i_j} \in \mc P_I$ for all $1\leq j\leq m+1$. We associate $n_i$ with the set $\{h_{i_1},\ldots,h_{i_{m+1}}\}$. There are only finitely many such sets associated with the $n_i$, so we can think of the situation as having the $n_i$ labeled by finitely many ``colors''.

Since there are only finitely many colors, by van der Waerden's theorem, for any integer $t$, there exists sufficiently large $M=M(t,m,k)$ such that there is a monochromatic arithmetic progression $J\subseteq \{1,\ldots,M\}$ of length $t$. Then for all $j\in J$, $n_j$ is associated to the same set $\mc{H}_t$, which is of size $m+1$. Now, $\{n_j:j\in J\}$ is also an arithmetic progression, which satisfies $n_j+h\in\mc P_I$ for all $j\in J$ and $h\in\mc{H}_t$. Finally, choose $\mc{H}'$ be such that $\mc H'$ appears infinitely many times in the sequence $(\mc H_t)_{t\geq 1}$. Then $\mc H'$ satisfies \cref{thm:green-tao-sato-tate}, as desired.
\end{proof}

\begin{proof}[Proof of \cref{thm:simplified-green-tao}]
This is similar to the proof that \cref{thm:bounded-gaps-sato-tate} implies \cref{thm:simplified-bounded-gaps}.
\end{proof}

\section{Symmetric power \texorpdfstring{$L$}{L}-functions for elliptic curves}\label{sec:symmetric-powers}

It now remains to prove the prime number theorem and Bombieri--Vinogradov type estimate from \cref{sec:PNT-BV-sato-tate}. 
Towards this, we shift our attention to symmetric power $L$-functions for non-CM elliptic curves over $\QQ$.
Many of the formulas in this section are concisely stated in the case of squarefree conductor in \cite[Section~1]{RT17}.

Let $E$ be a non-CM elliptic curve over $\QQ$ with conductor $N_E$. 
The $\ell$th symmetric power $L$-function associated to $E$ is defined as
\[
L(s,\Sym^\ell E) := \prod_{p\nmid N_E} \prod_{j=0}^{\ell}(1-\alpha_p^{j}\beta_p^{\ell-j}p^{-s})^{-1} \prod_{p\mid N_E}\prod_{j = 0}^\ell(1-\gamma_{j,\Sym^\ell E}(p)p^{-s})^{-1},
\]
where $\alpha_p$ and $\beta_p$ are the local roots for $E$ at primes $p$ not dividing $N_E$; explicitly, 
$\alpha_p = \exp(i\theta_p)$ and $\beta_p = \exp(-i\theta_p)$.
At primes $p\mid N_E$, the Satake parameters $\gamma_{j,\Sym^\ell E}(p)$ are given in \cite[Appendix~A.3]{DGMP18} and have absolute value at most $1$.
When $p$ divides $N_E$, the coefficient of $p^{-s}$ in $L(s,\Sym^\ell E)$ is given by $U_\ell (\cos \theta_p)$, where $U_\ell$ is the $\ell$th Chebyshev polynomial of the second kind, defined in \cref{eq:chebyshevdefn}. Moreover, taking the logarithmic derivative, we find that the coefficient of $p^{-ms}$ in $-(L'/L)(s, \Sym^{\ell} E)$ is given by $U_{\ell}(\cos m\theta_p) \log p$ when $p\nmid N_E$.

Let $\chi$ be a Dirichlet character of modulus $q$. The twisted $L$-functions $L(s, \Sym^\ell E\otimes\chi)$ are defined in the usual way via Rankin--Selberg convolution: 
\[
L(s, \Sym^\ell E\otimes\chi) = \prod_{p\nmid qN_E}\prod_{j = 0}^\ell (1 - \alpha_p^j\beta_p^{\ell - j} \chi(p) p^{-s})^{-1} \prod_{p\mid qN_E} \prod_{j=0}^\ell (1-\gamma_{j,\Sym^\ell E\otimes \chi}(p)p^{-s})^{-1}.
\]
In the above formula, we shall denote the Euler factor corresponding to a general prime $p$ by
\[L_p(s,\Sym^\ell E\otimes \chi) := \prod_{j=0}^\ell (1-\alpha_{j,\Sym^\ell E\otimes \chi}(p)p^{-s})^{-1}.
\]

It is conjectured that for all values of $\ell$, $L(s, \Sym^\ell E)$ can be associated to a cuspidal automorphic representation, that is, the $L$-function is automorphic. 
This is known in general for $\ell\leq 6$, and for $\ell\leq 8$ for suitable fields, including $\QQ$, as mentioned in \cref{sec:overview-of-argument}. 
Knowing automorphy for a given value of $\ell$ implies the following conjecture for that $\ell$, adapted from \cite[Conjecture~1.1]{RT17}. 

\begin{conjecture}\label{conj:gamma-factor}
Let $\ell \ge 1$ be an integer, $E/\QQ$ a non-CM elliptic curve, and $\chi$ a primitive Dirichlet character of modulus $q$. Denote the conductors of $L(\Sym^\ell E,s)$ and $L(s, \Sym^\ell E\otimes\chi)$ by $A$ and $A_\chi$, respectively. Then the function
\[
\Lambda(s, \Sym^\ell E \otimes \chi) := A_\chi^{s/2}L_\infty(s,\Sym^\ell E\otimes \chi)L(s,\Sym^\ell E\otimes \chi)
\]
is entire of order one, and there exists a complex number $w_{\Sym^\ell E\otimes \chi}$ of modulus 1 such that
\begin{equation}\label{eq:completed-functional-equation}
    \Lambda(s,\Sym^\ell E\otimes \chi) = w_{\Sym^\ell E\otimes \chi}\Lambda(1-s,\Sym^\ell E\otimes \overline{\chi}),
\end{equation}
where
\begin{itemize}
    \item $A$ is a positive integer satisfying $A\le N_E^{O(\ell)}$ and $\rad(A) \mid N_E$;
    \item $A_\chi$ is a positive integer satisfying $A_\chi \leq Aq^{\ell+1}$, $\rad(A_\chi)\mid qN_E$, and $A_\chi = A_{\overline{\chi}}$;
    \item The $L_\infty$ factor is
    \[L_\infty(s, \Sym^\ell E\otimes \chi) =\pi^{-(\ell+1)s/2} \prod_{j=0}^{\ell} \Gamma\biggl(\frac{s+\kappa_{ j,\Sym^\ell E\otimes \chi}}{2}\biggr),\]
    for certain constants $\kappa_{j,\Sym^\ell E\otimes \chi}$ satisfying $0 \le \kappa_{j,\Sym^\ell E\otimes \chi} \le 2+\ell/2$. 
\end{itemize}
\end{conjecture}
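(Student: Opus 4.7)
Under \cref{assumption-symmetric-powers} for this value of $\ell$, there is a cuspidal automorphic representation $\pi = \pi_\ell$ of $\GL_{\ell+1}(\AA_\QQ)$ such that $L(s, \pi) = L(s, \Sym^\ell E)$. The plan is to deduce \cref{conj:gamma-factor} from the standard analytic theory of $\GL_{\ell+1}$ automorphic $L$-functions and their Rankin--Selberg twists by $\GL_1$, via the work of Godement--Jacquet and Jacquet--Piatetski-Shapiro--Shalika. Viewing $\chi$ as a Hecke character of $\QQ^\times \backslash \AA_\QQ^\times$, the twist $\pi \otimes \chi$ is automorphic on $\GL_{\ell+1}$; its completed $L$-function $\Lambda(s, \pi \otimes \chi)$ is then entire of order one and satisfies the functional equation \cref{eq:completed-functional-equation} with a root number of modulus one. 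Self-duality $\widetilde{\Sym^\ell \pi_E} \simeq \Sym^\ell \pi_E$ (since $\pi_E$ has trivial central character) forces the symmetric shape of the equation, and in particular $A_\chi = A_{\overline\chi}$.

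For the conductor bounds, I would argue locally by decomposing $\pi = \bigotimes_v \pi_v$. At $p \nmid N_E$, $\pi_p$ is unramified. At $p \mid N_E$, $\pi_p$ corresponds via local Langlands to the $\ell$-th symmetric power of the local Weil--Deligne representation of $\pi_E$ at $p$, and a uniform Bushnell--Henniart-type bound gives $f(\pi_p) \ll \ell\, f(\pi_{E, p})$; multiplying yields $A \le N_E^{O(\ell)}$ and $\rad(A) \mid N_E$. For the twisted conductor, I would invoke the general local twist estimate $f(\pi_p \otimes \chi_p) \le f(\pi_p) + (\ell+1)f(\chi_p)$, with equality once $\chi_p$ is sufficiently more ramified than $\pi_p$; globalizing yields $A_\chi \mid A q^{\ell+1}$ and $\rad(A_\chi) \mid q N_E$.

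For the archimedean gamma factor, $\pi_{E, \infty}$ is the weight-$2$ holomorphic discrete series of $\GL_2(\RR)$, whose Langlands parameter is $\mathrm{Ind}_{W_\CC}^{W_\RR}(\phi_{1/2})$ with $\phi_{1/2}(z) = (z/\overline z)^{1/2}$, so $L_\infty(s, \pi_E) = \Gamma_\CC(s + 1/2)$. Taking $\Sym^\ell$ of this two-dimensional parameter produces $\lfloor (\ell+1)/2 \rfloor$ discrete-series summands contributing $\Gamma_\CC\bigl(s + (\ell-2j)/2\bigr)$ for $0 \le j \le \lfloor (\ell-1)/2 \rfloor$, together with one trivial or sign-character factor $\Gamma_\RR(s + \epsilon_\ell)$ when $\ell$ is even. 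Twisting by $\chi_\infty$ shifts every archimedean parameter by $0$ or $1$ according to the parity of $\chi$. Expanding $\Gamma_\CC(s+a) = \Gamma_\RR(s+a)\Gamma_\RR(s+a+1)$ via Legendre duplication then produces $\ell+1$ factors of $\Gamma_\RR$ type, matching $\pi^{-(\ell+1)s/2}\prod_{j=0}^{\ell}\Gamma\bigl((s+\kappa_j)/2\bigr)$; a direct check shows each $\kappa_j$ lies in $[0, 2 + \ell/2]$, with the extremal value $2 + \ell/2$ coming from the top-weight factor $\Gamma_\CC(s + \ell/2)$ expanded by duplication and shifted by an odd $\chi_\infty$.

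\textbf{The main obstacle} I expect is bookkeeping the twisted conductor at primes $p \mid \gcd(q, N_E)$, where $\pi_p$ may be wildly ramified (even supercuspidal) and the interaction with $\chi_p$ is nontrivial; this is most cleanly handled by appealing to the uniform Bushnell--Henniart twist bound rather than a prime-by-prime case analysis. Everything else is a routine assembly of standard $\GL_n$ automorphic machinery together with the explicit local Langlands correspondence at the archimedean place.
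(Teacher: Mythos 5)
Your proposal is correct and follows essentially the same route as the paper: the paper states \cref{conj:gamma-factor} as a consequence of automorphy (adapted from \cite[Conjecture~1.1]{RT17}) and defers the conductor and gamma-factor bounds to \cite[Section~5]{Rou07} and \cite[Appendix~A]{DGMP18}, which is precisely the standard $\GL_{\ell+1}$ machinery (functional equation and entirety for cuspidal twists, local conductor estimates, explicit archimedean Langlands parameters) that you assemble by hand. Minor quibbles only: the bound $A_\chi\le Aq^{\ell+1}$ is an inequality rather than the divisibility you assert, and the sign twist at infinity does not actually shift the $\Gamma_\CC$-type parameters, but neither point affects the stated bounds.
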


\begin{remark}
Assuming automorphy of $L(s, \Sym^\ell E)$, the bounds on $A$ and $A_\chi$ from \cref{conj:gamma-factor} follow using \cite[Section~5]{Rou07} and \cite[Appendix~A]{DGMP18}. The local roots $\gamma_{j,\Sym^\ell E\otimes \chi}(p)$ at primes $p\mid qN_E$ have magnitude at most $1$, since the local roots of both $L(s, \Sym^\ell E)$ and $L(s, \chi)$ satisfy the same bounds (see e.g. \cite[(2.6)]{ST19}).
\end{remark}
\begin{remark}
As mentioned earlier, \cref{conj:gamma-factor} is known for $0\le\ell\le 8$.
\end{remark}
\section{Phragm\'en--Lindel\"of and \texorpdfstring{$L$}{L}-function bounds}\label{sec:parameter-bounds}
Here we shall derive several bounds on quantities related to symmetric power $L$-functions, which will be essential in the proofs of \cref{sec:zero-free-siegel} and \cref{sec:bombieri}. Throughout this section, fix $\ell \ge 1$ and assume that $L(s, \Sym^\ell E)$ is automorphic; hence \cref{conj:gamma-factor} holds true in our case.
\subsection{Bounds on coefficients of \texorpdfstring{$L$}{L}-functions}
As described in \cref{conj:gamma-factor}, we have a completed function $\Lambda(s, \Sym^\ell E\otimes\chi) = A_\chi^{s/2}L_\infty(s, \Sym^\ell E\otimes\chi)L(s, \Sym^\ell E\otimes\chi)$ satisfying the functional equation \cref{eq:completed-functional-equation}.
Rearranging this gives
\[
L(s, \Sym^\ell E\otimes\chi) = \eta(s, \chi)L(1 - s, \Sym^\ell E\otimes\overline{\chi}),
\]
where $\eta(s,\chi)$ is some combination of gamma factors. Since $A_\chi = A_{\overline{\chi}}$, we have
\[
\eta(s, \chi) = w_{\Sym^\ell E\otimes \chi} A_\chi^{\frac{1}{2} - s}\frac{L_\infty(1 - s, \Sym^\ell E\otimes\overline{\chi})}{L_\infty(s, \Sym^\ell E\otimes\chi)}.
\]

Since the local Euler factors of $L(s,\Sym^\ell E)$ at primes $p\mid N_E$ do not necessarily split into desired forms, it will be more convenient to work with the unramified part of the symmetric power $L$-function, defined as follows:
\begin{align*}
L_\unr(s,\Sym^\ell E) := \prod_{p\nmid N_E} \prod_{j=0}^\ell (1-\alpha_p^j\beta_p^{\ell-j}p^{-s})^{-1}=: \sum_{n\geq 1}a_nn^{-s}.
\end{align*}
We also define the unramified part for the twisted symmetric power $L$-function as
\begin{equation}\label{eq:definition-of-L-unr}
L_\unr(s,\Sym^\ell E\otimes\chi) := \prod_{p\nmid qN_E} \prod_{j=0}^\ell (1-\alpha_p^j\beta_p^{\ell-j}\chi(p)p^{-s})^{-1} = \prod_{p\nmid N_E}\prod_{j=0}^\ell(1-\alpha_p^j\beta_p^{\ell-j}\chi(p)p^{-s})^{-1},
\end{equation}
using that $\chi(p) = 0$ for $p|q$. Thus we find
\[
L_\unr(s, \Sym^\ell E\otimes\chi) = \sum_{n\ge 1}a_n\chi(n)n^{-s}.
\]
Then we can define
\begin{align*}
\sum_{n\ge 1}b_n\chi(n)n^{-s}:=\frac{1}{L_\unr(s, \Sym^\ell E\otimes\chi)},\qquad
\sum_{n\ge 1}\Lambda(n)c_n\chi(n)n^{-s}:=-\frac{L_\unr'}{L_\unr}(s, \Sym^\ell E\otimes\chi),
\end{align*}
noting that $b_n$ and $c_n$ are independent of $\chi$. The functional equation for $L_\unr$ is
\[L_\unr(s,\Sym^\ell E\otimes \chi) = \wt{\eta}(s,\chi)L_\unr(1-s,\Sym^\ell E\otimes \overline{\chi}),
\]
where we define
\begin{align*}\wt{\eta}(s,\chi):=&\  w_{\Sym^\ell E\otimes \chi} A_\chi^{\frac{1}{2} - s}\frac{L_\infty(1 - s, \Sym^\ell E\otimes\overline{\chi})}{L_\infty(s, \Sym^\ell E\otimes\chi)}\prod_{p\mid qN_E} \frac{L_p(1-s,\Sym^\ell E\otimes \overline{\chi})}{L_p(s,\Sym^\ell E\otimes \chi)}\\
=&\ \eta(s,\chi)\prod_{p\mid qN_E} \prod_{j=0}^\ell \frac{1-\gamma_{j,\Sym^\ell E\otimes \chi}(p)p^{-s}}{1-\gamma_{j,\Sym^\ell E\otimes \overline{\chi}}(p)p^{s-1}}.
\end{align*}

It will be useful to find upper bounds for $a_n,b_n,$ and $c_n$. 
From the form of the local roots of $L(s, \Sym^\ell E \otimes \chi)$, one easily obtains
\[
|a_n|\le \tau_{\ell+1}(n), \qquad\qquad |b_n|\le \tau_2(n)^{\ell+1}, \qquad\qquad
|c_n|\le\ell + 1,
\]
where $\tau_a(n)$ is the number of ways to write $n$ as an ordered product of $a$ positive integers. We will need to bound partial sums of powers of these generalized divisor functions. We have the following lemma, due to Norton \cite{N92}.
\begin{lemma}[{\cite[Lemma~2.5]{N92}}]\label{lem:divisor-power-sum}
Let $h$ be a real-valued multiplicative function such that $h(p^{a-1})\le h(p^a)$ for all primes $p$ and $a\ge 1$. Then for $x\ge 1$ and $\sigma\ge 1$, we have
\[\sum_{n\le x}h(n)\le x^\sigma\prod_{p\le x}\Biggl(1+\sum_{a = 1}^{\lfloor\log_px\rfloor}\frac{h(p^a)-h(p^{a-1})}{p^{a\sigma}}\Biggr).\]
\end{lemma}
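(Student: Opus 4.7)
The plan is to convert the monotonicity hypothesis $h(p^{a-1}) \le h(p^a)$ into an honest nonnegativity statement by introducing $g := h \ast \mu$ (Dirichlet convolution) and writing $h = g \ast 1$. Since $h$ is multiplicative, so is $g$, and M\"obius inversion on prime powers gives $g(1) = 1$ and $g(p^a) = h(p^a) - h(p^{a-1}) \ge 0$ for every prime $p$ and every $a \ge 1$. In particular, the local factors appearing on the right-hand side of the target inequality are exactly the truncated Euler factors of $g$ at $a = \lfloor \log_p x\rfloor$, so matching the right-hand side is equivalent to bounding $\sum_{d \le x} g(d)/d^\sigma$ from above.

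With this identification in hand, I would unfold the convolution and use the trivial bound $\lfloor x/d\rfloor \le x/d$:
\[
\sum_{n \le x} h(n) = \sum_{d \le x} g(d) \Bigl\lfloor \frac{x}{d} \Bigr\rfloor \le x \sum_{d \le x} \frac{g(d)}{d},
\]
where nonnegativity of $g$ is used to discard the floor. To accommodate the parameter $\sigma \ge 1$, the Rankin trick gives $1/d \le x^{\sigma-1}/d^\sigma$ whenever $d \le x$, so the bound upgrades to $x^\sigma \sum_{d \le x} g(d)/d^\sigma$.

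Finally, since $g$ is multiplicative and nonnegative and every $d \le x$ has each of its prime-power divisors $p^a$ bounded above by $x$ (forcing $a \le \lfloor \log_p x\rfloor$), the partial Dirichlet series is dominated by a truncated Euler product:
\[
\sum_{d \le x} \frac{g(d)}{d^\sigma} \le \prod_{p \le x}\Biggl(1 + \sum_{a=1}^{\lfloor \log_p x\rfloor} \frac{g(p^a)}{p^{a\sigma}}\Biggr) = \prod_{p \le x}\Biggl(1 + \sum_{a=1}^{\lfloor \log_p x\rfloor} \frac{h(p^a) - h(p^{a-1})}{p^{a\sigma}}\Biggr),
\]
which is exactly the claimed inequality. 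There is no serious obstruction here; the only genuine subtlety is recognizing that the right auxiliary function to introduce is $g = h \ast \mu$ rather than $h$ itself, so that the monotonicity hypothesis becomes $g \ge 0$. Once that choice is made, the remaining steps (the Rankin trick and the truncated Euler factorization, both relying on $g \ge 0$) are routine.
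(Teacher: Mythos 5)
Your proof is correct: with $g = h\ast\mu$ the hypothesis becomes $g(p^a)=h(p^a)-h(p^{a-1})\ge 0$, and the chain $\sum_{n\le x}h(n)=\sum_{d\le x}g(d)\lfloor x/d\rfloor\le x\sum_{d\le x}g(d)/d\le x^{\sigma}\sum_{d\le x}g(d)/d^{\sigma}\le x^{\sigma}\prod_{p\le x}\bigl(1+\sum_{a\le\lfloor\log_p x\rfloor}g(p^a)p^{-a\sigma}\bigr)$ is valid at every step because $g\ge 0$ and every $d\le x$ has all prime-power divisors $p^a\le x$. The paper itself gives no proof (it quotes the lemma from Norton), and your argument—nonnegative convolution inverse, Rankin's trick, domination by the truncated Euler product—is the standard derivation of exactly this statement.
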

By using $\sigma = 1$ and the Mertens estimate on $\sum_{p\le x}1/p$, it is easy to use this result to bound sums of products of generalized divisor functions as expressions of the type $x(\log x)^B$. Combining this with partial summation will allow us to estimate very general number-theoretic sums.

\subsection{Bounds on values of \texorpdfstring{$L$}{L}-functions and their derivatives}\label{subsec:L-function-bounds}

We will need bounds for the quantities $\widetilde{\eta}(\sigma+it,\chi), L(\sigma+it, \Sym^\ell E\otimes \chi),$ and $L_\unr(\sigma+it, \Sym^\ell E\otimes \chi)$, as well as their derivatives, for some suitable ranges of $\sigma$. We shall bound $\eta$ and $L$ first. Noting that $\widetilde{\eta}$ and $\eta$, as well as $L_\unr$ and $L$, only differ by some (Euler) factors at primes $p\mid qN_E$, we can immediately obtain the bounds of $\widetilde{\eta}$ and $L_\unr$ from their counterparts. 

Using Stirling's formula and the explicit form of the gamma factors from \cref{conj:gamma-factor}, we find for $\sigma\in (-1/4,1/2)$ that
\begin{align*}
|\eta(\sigma+it,\chi)|\ll_{\ell} (A_{\chi}(|t|+2)^{\ell+1})^{\frac{1}{2}-\sigma}.
\end{align*}
Now we bound the log derivative of $\eta$ for $\sigma \in (-1/4,1/2)$. A similar method yields the estimate
\begin{align*}
\biggl|\frac{\eta'}{\eta}(\sigma+it,\chi)\biggr|\ll_{\ell}\log (A_{\chi}(|t|+2)^{\ell}).
\end{align*}
Combining the two inequalities gives a bound for $\eta'(\sigma+it).$

To obtain some estimates for the sizes of the relevant $L$-functions on vertical strips, we shall apply the Phragm\'en--Lindel\"of theorem. 
The following version is from Rademacher \cite{Rad60}.
\begin{theorem}[Phragm\'en--Lindel\"of]\label{thm:phragmen-lindelof}
Write $s = \sigma + it$. Suppose $h$ is a holomorphic function of finite order in  the strip $c\le\sigma\le d$. Suppose further there are constants $C, D, Q, \alpha, \beta$ such that
\begin{align*}
|h(c + it)|\le C|Q + c + it|^\alpha,\qquad 
|h(d + it)|\le D|Q + d + it|^\beta.
\end{align*}
Then for all $c\le\sigma\le d$ we have that
\[|h(s)|\le (C|Q + s|)^{\frac{d-\sigma}{d-c}}(D|Q + s|^\beta)^{\frac{\sigma-c}{d-c}}.\]
\end{theorem}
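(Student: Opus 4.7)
The plan is to adapt the classical Phragmén--Lindelöf argument: reduce to the three-lines theorem by dividing $h$ by an explicit holomorphic interpolating factor, and then control the quotient in the interior using the finite-order hypothesis together with a standard auxiliary decaying function.

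After enlarging $Q$ if necessary so that $\Re(Q+s) > 0$ throughout the closed strip, the principal branch of $\log(Q+s)$ is holomorphic there. I would introduce the entire linear function $w(s) := \alpha (d-s)/(d-c) + \beta (s-c)/(d-c)$, which satisfies $w(c) = \alpha$ and $w(d) = \beta$, and set
\[
G(s) := \frac{h(s)}{C^{(d-s)/(d-c)} D^{(s-c)/(d-c)} (Q+s)^{w(s)}}.
\]
This is holomorphic on the strip and inherits finite order from $h$. On the two vertical boundaries the hypotheses give $|G| \le 1$ up to a tame phase factor of the form $\exp(\Im w(s)\arg(Q+s)) = \exp(O(|t|))$ coming from the mismatch between $|(Q+s)^{w(s)}|$ and $|Q+s|^{\Re w(s)}$, which can be absorbed by a further twist $\widetilde{G}(s) := G(s) \exp(\mu s)$ with $\mu \in \CC$ chosen so that $\widetilde{G}$ is genuinely bounded on both boundaries by an absolute constant.

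Next I would apply the standard finite-order Phragmén--Lindelöf in a strip to $\widetilde{G}$. Multiply by $g_\varepsilon(s) := \exp\bigl(-\varepsilon(e^{\lambda s} + e^{-\lambda s})\bigr)$ for a $\lambda > 0$ small enough (depending on the width $d-c$ and the order of $\widetilde{G}$) that $|\widetilde{G} \cdot g_\varepsilon|$ decays as $|t| \to \infty$ uniformly in $\sigma$. Apply the maximum modulus principle on a tall rectangle in the strip, take the height to infinity to bound $\widetilde{G}\cdot g_\varepsilon$ on the whole strip by its boundary sup, and finally let $\varepsilon \to 0^+$ to obtain a constant bound on $|\widetilde{G}(s)|$. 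Unwinding the definitions of $\widetilde{G}$ and $G$ then yields precisely the claimed weighted bound on $|h(s)|$.

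I expect the main obstacle to be the bookkeeping around the $\exp(O(|t|))$ boundary distortion introduced by the factor $(Q+s)^{w(s)}$: the twist $\exp(\mu s)$ must be chosen so that the phase contributions on the two boundary lines cancel in such a way that the constant produced by the three-lines step can be absorbed back into the sharp $|Q+s|$-power asymmetrically (exponent $\alpha$ on $\sigma = c$, exponent $\beta$ on $\sigma = d$). Once this accounting is arranged, the finite-order hypothesis enters only through the standard role of the auxiliary function $g_\varepsilon$, and in particular the order of $h$ never appears quantitatively in the final estimate.
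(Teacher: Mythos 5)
The paper offers no proof of this statement at all --- it is quoted from Rademacher \cite{Rad60} --- so your attempt can only be judged on its own terms, and it has a genuine gap at the step where you absorb the phase distortion. On the line $\sigma=c$ your quotient satisfies $|G(c+it)|\le\exp\bigl(\Im w(c+it)\,\arg(Q+c+it)\bigr)$ with $\Im w(s)=\tfrac{(\beta-\alpha)t}{d-c}$, and since $t\arg(Q+c+it)\sim\tfrac{\pi}{2}|t|$ as $t\to\pm\infty$, this distortion behaves like $\exp\bigl(\tfrac{\pi(\beta-\alpha)}{2(d-c)}|t|\bigr)$: it grows (or decays) in $|t|$ \emph{symmetrically at both ends} of each boundary line. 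A twist $e^{\mu s}$ has $|e^{\mu s}|=e^{\Re\mu\,\sigma-\Im\mu\,t}$, which is monotone in $t$ on a vertical line, so if it decays as $t\to+\infty$ it grows as $t\to-\infty$; hence no choice of $\mu\in\CC$ can make $\widetilde G$ bounded on the boundary when $\beta>\alpha$. When $\alpha>\beta$ the boundary bound $|G|\le 1$ holds with no twist at all, but the same distortion then reappears at your ``unwinding'' step: at interior points $|(Q+s)^{w(s)}|=|Q+s|^{\Re w(s)}\exp\bigl(\tfrac{(\alpha-\beta)}{d-c}\,t\arg(Q+s)\bigr)$, which exceeds $|Q+s|^{\Re w(s)}$ by as much as $\exp\bigl(\tfrac{\pi(\alpha-\beta)}{2(d-c)}|t|\bigr)$. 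So your argument yields the stated inequality only up to an unbounded factor $\exp\bigl(\tfrac{\pi|\alpha-\beta|}{2(d-c)}|t|\bigr)$ (it is clean only when $\alpha=\beta$); removing that $e^{O(|t|)}$ loss is precisely the nontrivial content of Rademacher's theorem, not a bookkeeping matter.

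Two further points. First, you cannot ``enlarge $Q$'': $Q$ appears in the conclusion, so increasing it proves a weaker statement (and for negative exponents the hypotheses need not even be preserved); what is actually required is the hypothesis $Q+c>0$, which --- together with $\alpha\ge\beta$ --- is part of Rademacher's original theorem and is silently assumed in the version stated here (the application in \cref{lem:L-L'-sizes} has $\beta=0$ and $\alpha>0$, so it is satisfied there; note also that the first factor in the displayed conclusion should read $(C|Q+s|^{\alpha})^{\frac{d-\sigma}{d-c}}$). Second, if one is willing to lose an absolute constant, the standard repair is not a linear twist but a change of comparison function: work separately in the upper and lower half-strips with factors such as $(-is)^{w(s)}$, whose argument tends to $0$ like $1/|t|$ so that $\Im w(s)\arg(\cdot)$ stays bounded (Titchmarsh's treatment); obtaining the clean constant-free bound of the statement requires Rademacher's more careful argument. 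The finite-order part of your outline --- the auxiliary decaying factor $g_\varepsilon$, maximum modulus on tall rectangles, $\varepsilon\to0^+$ --- is standard and unobjectionable; the flaw is entirely in the claim that $e^{\mu s}$ can repair the boundary distortion and that unwinding gives exactly the claimed bound.
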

We now bound $L(s,\Sym^\ell E\otimes \chi)$ and $L'(s, \Sym^\ell E\otimes \chi)$ in the critical strip $0\le\sigma\le 1$.
\begin{proposition}\label{lem:L-L'-sizes}
We have for $0\leq \sigma \leq 1$ that
\begin{align*}
|L(\sigma + it, \Sym^\ell E\otimes\chi)|&\ll_{\ell} (A_\chi(|t| + 2)^{\ell + 1})^{\frac{1-\sigma}{2}}(\log [A_\chi(|t| + 2)^{\ell+1}])^{\ell+1},\\
|L'(\sigma + it, \Sym^\ell E\otimes \chi)|&\ll_{\ell} (A_\chi(|t| + 2)^{\ell + 1})^{\frac{1-\sigma}{2}}(\log [A_\chi(|t| + 2)^{\ell+1}])^{2\ell+2}.
\end{align*}
\end{proposition}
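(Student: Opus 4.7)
The plan is to prove both bounds through a Phragm\'en--Lindel\"of convexity argument. Set $Q := A_\chi(|t|+2)^{\ell+1}$ and $\epsilon := 1/\log Q$. I will bound $L(s,\Sym^\ell E\otimes \chi)$ and $L'(s,\Sym^\ell E\otimes \chi)$ on the two vertical lines $\sigma = 1+\epsilon$ and $\sigma = -\epsilon$, and then interpolate using \cref{thm:phragmen-lindelof} inside the critical strip. The hypothesis that $\Lambda(s,\Sym^\ell E\otimes \chi)$ is entire of order one, supplied by \cref{conj:gamma-factor}, together with the non-vanishing of $L_\infty$ in the strip (its poles all lie at $\sigma\le 0$ since $\kappa_j \ge 0$), ensures that $L$ and $L'$ are holomorphic of finite order in the strip, so the hypotheses of \cref{thm:phragmen-lindelof} are met.

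On the right line the Dirichlet series $L(s,\Sym^\ell E\otimes \chi) = \sum_n a_n\chi(n)n^{-s}$ converges absolutely. Using the coefficient bound $|a_n|\le \tau_{\ell+1}(n)$ stated at the start of this section, together with the identity $\sum_n \tau_{\ell+1}(n) n^{-s} = \zeta(s)^{\ell+1}$ and the estimate $\zeta(1+\epsilon)\ll \epsilon^{-1}=\log Q$, I get $|L(1+\epsilon+it,\Sym^\ell E\otimes \chi)|\ll_\ell (\log Q)^{\ell+1}$. Differentiating term by term and appealing to \cref{lem:divisor-power-sum} (equivalently, $|\zeta'(1+\epsilon)|\ll\epsilon^{-2}$) yields $|L'(1+\epsilon+it,\Sym^\ell E\otimes \chi)|\ll_\ell (\log Q)^{\ell+2}$.

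On the left line I use the functional equation $L(s,\Sym^\ell E\otimes \chi) = \eta(s,\chi)L(1-s,\Sym^\ell E\otimes \overline{\chi})$, together with the estimates $|\eta(-\epsilon+it,\chi)|\ll_\ell Q^{1/2+\epsilon}$ and $|\eta'(-\epsilon+it,\chi)|\ll_\ell Q^{1/2+\epsilon}\log Q$ already established in this subsection. Combined with the right-line bounds applied to $\overline{\chi}$, this gives $|L(-\epsilon+it,\Sym^\ell E\otimes \chi)|\ll Q^{1/2+\epsilon}(\log Q)^{\ell+1}$, and differentiating the functional equation gives $|L'(-\epsilon+it,\Sym^\ell E\otimes \chi)|\ll Q^{1/2+\epsilon}(\log Q)^{\ell+2}$. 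Feeding all four boundary bounds into \cref{thm:phragmen-lindelof} in the strip $-\epsilon\le\sigma\le 1+\epsilon$ interpolates to $|L(\sigma+it)|\ll Q^{(1-\sigma)/2+O(\epsilon)}(\log Q)^{\ell+1}$ and analogously for $L'$; absorbing $Q^{O(\epsilon)}=O(1)$ recovers the stated $L$-bound exactly, and gives the stated $L'$-bound with slack (the argument in fact produces $(\log Q)^{\ell+2}\le (\log Q)^{2\ell+2}$).

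The main technical hurdle is the bookkeeping around the choice $\epsilon = 1/\log Q$, which is precisely what makes the boundary bounds tight and ensures that the factor $Q^{O(\epsilon)}$ emerging from \cref{thm:phragmen-lindelof} collapses to a harmless constant. The Dirichlet-series step also requires careful tracking of $\log$ powers under differentiation, but once the coefficient estimate $|a_n|\le \tau_{\ell+1}(n)$ is fed through \cref{lem:divisor-power-sum}, the remainder of the argument is routine.
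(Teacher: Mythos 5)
Your proposal is correct and takes essentially the same approach as the paper: bound $L(s,\Sym^\ell E\otimes\chi)$ on $\sigma=1+\eps$ via the coefficient bound $|a_n|\le\tau_{\ell+1}(n)$ (giving $\zeta(1+\eps)^{\ell+1}$), on $\sigma=-\eps$ via the functional equation and the $\eta$, $\eta'$ estimates from this subsection, and then interpolate with Phragm\'en--Lindel\"of, choosing $\eps=1/\log[A_\chi(|t|+2)^{\ell+1}]$ so the $Q^{O(\eps)}$ factor is absorbed. The only (harmless) divergence is your treatment of $L'$: you differentiate the Dirichlet series and the functional equation directly, obtaining the slightly sharper boundary power $(\log Q)^{\ell+2}$, whereas the paper routes through $-(L'/L)$ and settles for $(\log Q)^{2\ell+2}$; either way the stated bound follows.
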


\begin{proof}
From splitting the Euler product into $\ell + 1$ terms and expanding, we find that
\[|L(1 + \eps + it, \Sym^\ell E\otimes \chi)|\le\zeta(1+\eps)^{\ell+1}.\]
Using the functional equation and our bounds on $|\eta(s, \chi)|$, we see that
\[|L(-\eps + it, \Sym^\ell E\otimes \chi)|\ll_{\ell} (A_\chi(|t| + 2)^{\ell+1})^{\frac{1}{2}+\eps}\zeta(1+\eps)^{\ell+1}\]
if $0 < \eps < 1/2$. Note that $L(s, \Sym^\ell E\otimes\chi)$ does not have a pole, even when $\chi$ is trivial, as $\ell\ge 1$ and it is order $1$ by assumption of automorphy. Thus by \cref{thm:phragmen-lindelof}, we see that
\[|L(\sigma + it,\Sym^\ell E\otimes \chi)|\ll_{\ell}\zeta(1+\eps)^{\ell+1}(A_\chi(|t|+2)^{\ell+1})^{\frac{1+\eps-\sigma}{2}}.\]
Choosing $\eps = 1/{\log[A_\chi(|t|+2)^{\ell+1}]}$ and using $\zeta(1+\eps)\ll \eps^{-1}$, we conclude that
\[|L(\sigma + it,\Sym^\ell E\otimes \chi)|\ll_{\ell}(A_\chi(|t|+2)^{\ell+1})^{\frac{1-\sigma}{2}}(\log[A_\chi(|t|+2)^{\ell+1}])^{\ell+1},\]
as desired.

Now we do the same for $L'(s, \Sym^\ell E\otimes \chi)$. By considering $-(L'/L)(s, \Sym^\ell E\otimes \chi)$, using the functional equation, and multiplying, we can bound $L'(\sigma + it,\Sym^\ell E\otimes \chi)$ on $\sigma = -\eps$ and $\sigma = 1 + \eps$, and then apply the Phragm\'en--Lindel\"of theorem. The fact that $-(\zeta'/\zeta)(1 + \eps)\ll\eps^{-1}$ introduces an extra factor of $\ll_{\ell}\log[A_\chi(|t|+2)^{\ell+1}]^{\ell+1}$.
\end{proof}
\begin{remark}
The above bounds on $L$ and $\eta$ allow us to compute bounds for $L_\unr$ and $L_\unr'$, as well as $\widetilde\eta$ and $\widetilde\eta'$. By bounding the Euler factors at primes $p\mid qN_E$, if $q\le Q$ we see for any $\eps'>0$ that
\begin{align*}
|L_\unr(s,\Sym^\ell E\otimes \chi)|&\ll_{\ell,\eps'}(QA)^{\eps'}|L(s,\Sym^\ell E\otimes\chi)|,\\
|\widetilde\eta(s,\chi)|&\ll_{\ell,\eps'}(QA)^{\eps'}|\eta(s,\chi)|,
\end{align*}
so long as $-1/\log(AQ)\ll_\ell\sigma\le 1/2$. Similar results can be obtained for the derivatives. 
\end{remark}

\section{The Siegel--Walfisz theorem for symmetric powers}\label{sec:zero-free-siegel}

Following the notation from \cite[Section~5]{IK04}, define
\[
\psi(x, \Sym^{\ell} E \otimes \chi):= \sum_{n\le x}\Lambda_{\Sym^{\ell} E \otimes \chi}(n),
\]
where $\Lambda_{\Sym^{\ell} E \otimes \chi}$ is the coefficient of $n^{-s}$ in $-(L'/L)(s, \Sym^{\ell} E \otimes \chi)$, equal to
\begin{equation}
\Lambda_{\Sym^\ell E\otimes\chi}(p^m) = \log(p)\sum_{j = 0}^\ell\alpha_{j,\Sym^\ell E\otimes}(p)^m
\end{equation}
at prime powers and $0$ elsewhere. Since, as noted, the local roots have magnitude bounded by $1$, we have the bound
\begin{equation}\label{eq:lambda-coefficient-bound}
|\Lambda_{\Sym^\ell E\otimes\chi}(n)|\le (\ell+1)\Lambda(n),
\end{equation}
where $\Lambda(n)$ is the usual von Mangoldt function.

\begin{lemma}\label{lem:siegel-zero-free}
Let $E/\QQ$ be a non-CM elliptic curve, and let $\chi$ be a primitive Dirichlet character of modulus $q$. If we assume automorphy of $L(s, \Sym^{\ell} E)$, where $s = \sigma + it$, then we have $L(s, \Sym^{\ell} \otimes \chi)\neq 0$ whenever
\[
\sigma \ge 1 - \frac{c_{\ell,E}}{\log(q(|t|+3)},
\]
except possibly for one real Siegel zero $\beta_{\chi}$ when $\chi$ is real. This zero satisfies
\[
\beta_{\chi}\le 1 - \frac{c(\ell, E, \eps)}{q^{\eps}}
\]
for any fixed $\eps>0$.
\end{lemma}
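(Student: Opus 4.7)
The plan is to prove the two claims separately: the standard logarithmic zero-free region via the de la Vall\'ee Poussin method adapted to automorphic $L$-functions, and the ineffective Siegel-type bound on the putative exceptional real zero via the work of Molteni \cite{Mo99} (as forecast in \cref{sec:overview-of-argument}).

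For the first claim, I would apply the general template of \cite[Theorem~5.35]{IK04} specialized to our setting. The structural inputs needed are supplied by our automorphy assumption together with \cref{conj:gamma-factor} and \cref{sec:parameter-bounds}: $L(s, \Sym^\ell E \otimes \chi)$ has an Euler product of bounded degree $\ell+1$; the functional equation delivers analytic conductor of size $\asymp A_\chi(|t|+2)^{\ell+1}$ with $A_\chi \leq A q^{\ell+1}$, so the logarithm of the analytic conductor is $O_{E,\ell}(\log(q(|t|+3)))$, producing exactly the denominator in the advertised zero-free region; and the coefficient bounds from \cref{sec:parameter-bounds} control the relevant error terms. The indispensable positivity input comes from a Rankin--Selberg style auxiliary product: for $\pi = \Sym^\ell E$, the $L$-function $L(s, \pi \otimes \widetilde{\pi})$ has non-negative logarithmic-derivative coefficients and an order-one pole at $s = 1$. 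Combining this with the classical $3 + 4\cos\theta + \cos 2\theta \geq 0$ trick, applied through the additional twists by $\chi$ and $\chi^2$, produces an auxiliary $L$-function with non-negative Dirichlet coefficients. Assuming a hypothetical zero $\rho = \sigma_0 + it_0$ of $L(s, \Sym^\ell E \otimes \chi)$ too close to the edge $\Re(s) = 1$, I would then derive a contradiction by balancing the pole at $s = 1$ against the would-be zero, exactly as in the classical treatment of Dirichlet $L$-functions.

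For the second claim, I would invoke Molteni's analogue \cite{Mo99} of Siegel's theorem, which applies to any $L$-function satisfying our analytic hypotheses (Euler product of bounded degree, functional equation with polynomial conductor, standard coefficient estimates)---all of which our $L(s, \Sym^\ell E \otimes \chi)$ enjoys under \cref{assumption-symmetric-powers}. Molteni's proof proceeds by the classical two-character trick: for any auxiliary real primitive character $\chi'$ of conductor coprime to $q$, one examines a product $L$-function with non-negative Dirichlet coefficients at a well-chosen real $s_0 < 1$, and plays the putative exceptional zeros of $L(s, \Sym^\ell E \otimes \chi)$ and $L(s, \Sym^\ell E \otimes \chi')$ against one another to extract the ineffective bound $\beta_\chi \leq 1 - c(\ell, E, \eps)/q^\eps$.

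The main obstacle is verifying the analytic properties of the auxiliary product---specifically that it is holomorphic on $\Re(s) \geq 1$ except for an order-one pole at $s = 1$, and that its logarithmic derivative has genuinely non-negative Dirichlet coefficients, including at the ramified primes dividing $qN_E$ where the Satake parameters must be handled carefully as in the discussion around \cref{eq:definition-of-L-unr}. Under the automorphy of $\Sym^\ell E$ these inputs reduce to known Rankin--Selberg theory for $\GL_{\ell+1} \times \GL_{\ell+1}$ twisted by $\GL_1$ characters; the routine-but-delicate verification of these inputs, rather than any novel single idea, is where the technical weight of the proof falls.
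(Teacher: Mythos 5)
Your outline is sound, but it takes a noticeably more laborious route than the paper, which treats both halves of the lemma as consequences of existing results rather than re-proving them. For the zero-free region, the paper does not run the de la Vall\'ee Poussin argument at all: it simply invokes \cite[Theorem~A.1]{HB19} with $\pi=\chi$ and $\pi'$ the cuspidal automorphic representation attached to $\Sym^\ell E$, so all of the Rankin--Selberg positivity and $3+4\cos\theta+\cos 2\theta$ bookkeeping you describe is outsourced to that reference; your plan amounts to reconstructing its proof, which is feasible but is exactly the ``routine-but-delicate'' verification the paper deliberately avoids. For the exceptional zero, both you and the paper rely on Molteni \cite{Mo99}, but in different ways: the paper uses \cite[Theorem~2.3.2]{Mo99} purely as a black-box lower bound $L(1,\Sym^\ell E\otimes\chi)\gg_{\ell,E,\eps}q^{-\eps}$ (checking its hypotheses only through the boundedness of the local roots, via \cite[(A.12)]{RS96}), and then converts this into the bound on $\beta_\chi$ by the mean value theorem, using the Phragm\'en--Lindel\"of upper bound on $|L'(s,\Sym^\ell E\otimes\chi)|$ from \cref{lem:L-L'-sizes} together with $A_\chi\le Aq^{\ell+1}$, as in \cite[Ch.~21]{DA82}. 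You instead propose to re-run the two-character Siegel argument inside the symmetric-power setting, which is essentially re-proving Molteni's theorem and carries the positivity burden you flag as the main obstacle. One small point to make explicit if you keep your framing: Molteni's theorem is a lower bound on the value at $s=1$, not directly a zero-repulsion statement, so your write-up should include the short mean-value-theorem step (with an upper bound on $L'$ near $s=1$, which the paper already has in \cref{lem:L-L'-sizes}) to pass from $L(1,\Sym^\ell E\otimes\chi)\gg q^{-\eps}$ to $1-\beta_\chi\gg q^{-\eps}$. In short, your approach buys self-containedness at the cost of substantial technical overhead; the paper's buys brevity and reuses its own analytic bounds, at the cost of leaning on \cite{HB19} and \cite{Mo99}.
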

\begin{proof}
The zero-free region holds by \cite[Theorem~A.1]{HB19} for $\pi = \chi$ and $\pi'$ the cuspidal automorphic representation associated to $\Sym^{\ell} E$. To bound $\beta_\chi$, note that we have
\[
L(1, \Sym^{\ell} E \otimes \chi)\gg_{\ell, E, \eps} q^{-\eps}
\]
for all $\eps>0$ by \cite[Theorem~2.3.2]{Mo99} for the $L$-function $L(s, \Sym^{\ell} E)$ in the $\mc{L}^{*}$-class of irreducible automorphic representations of $\GL_{\ell +1}(\AA_{\QQ})$. The hypotheses of the theorem hold (since the local roots of $L(s, \Sym^{\ell} E)$ are bounded in magnitude by $1$, which implies the same for $L(s, \Sym^{\ell} E \otimes \Sym^{\ell} E)$; see, for example, \cite[(A.12)]{RS96}). This lower bound, along with the upper bound for $|L'(s, \Sym^{\ell} E \otimes \chi)|$ established in \cref{lem:L-L'-sizes} (and the inequality $A_{\chi} \le A q^{\ell+1}$ from \cref{sec:parameter-bounds}), implies the desired inequality for $\beta_{\chi}$ by the mean value theorem (as in, for example, \cite[Ch.~21]{DA82}).
\end{proof}
We have the following analogue of the Siegel--Walfisz theorem, which estimates $\psi(x, \Sym^{\ell} E \otimes \chi)$.
\begin{theorem}\label{thm:siegel-walfisz}
Let $E/\QQ$ be a non-CM elliptic curve, and $\chi$ be a primitive Dirichlet character of modulus $q$. For any $\ell\ge 1$ and any $B > 0$, assuming automorphy of $L(s, \Sym^\ell E)$, there exists a constant $c_{B,E,\ell}>0$ such that
\[
\psi(x,\Sym^\ell E\otimes\chi) = O_{B,E,\ell}\Bigl(x\exp\Bigl(-c_{B,E,\ell}\sqrt{\log x}\Bigr)\Bigr)
\]
in the range $q \le (\log x)^B$.
\end{theorem}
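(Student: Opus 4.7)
The plan is to follow the standard explicit-formula / contour-shift approach for $\psi(x, \Sym^\ell E \otimes \chi)$, closely analogous to \cite[Theorem~5.13]{IK04}. First, by Perron's formula (in truncated form), one writes
\[
\psi(x, \Sym^\ell E \otimes \chi) = -\frac{1}{2\pi i}\int_{c-iT}^{c+iT}\frac{L'}{L}(s, \Sym^\ell E\otimes \chi)\frac{x^s}{s}\,ds + R(x,T),
\]
for $c = 1 + 1/\log x$, where the remainder $R(x,T)$ is controlled using the coefficient bound \cref{eq:lambda-coefficient-bound}. One then shifts the contour to the vertical line $\Re(s) = 1 - \eta$, where $\eta := c_{E,\ell}/\log(q(|T|+3))$ is a hair inside the zero-free region from \cref{lem:siegel-zero-free}. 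Since $\ell \ge 1$ ensures no pole of $L(s,\Sym^\ell E\otimes \chi)$ at $s=1$, the contour shift picks up only (i) the residue at $s=0$, which is $O(\log(qA))$ from standard Hadamard-product considerations, (ii) a residue at any Siegel zero $\beta_\chi$, contributing $x^{\beta_\chi}/\beta_\chi$, and (iii) residues at other nontrivial zeros.

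For the shifted integral and the zero-sum, I would invoke the bound on $L'/L$ obtained from the convexity estimates in \cref{lem:L-L'-sizes} together with the usual Borel--Carath\'eodory argument, converting the growth bound into $|L'/L(s,\Sym^\ell E\otimes \chi)| \ll_\ell \log^2(A_\chi(|t|+2)^{\ell+1})$ at distance $\eta$ from the zero-free boundary. Coupled with the bound $A_\chi \le A q^{\ell+1} \le N_E^{O(\ell)}q^{\ell+1}$ from \cref{conj:gamma-factor}, and optimizing $T$ by setting $\log T \asymp \sqrt{\log x}$, the shifted contour and the associated zero-density contribution together give
\[
\ll_{E,\ell} x\exp\bigl(-c'_{E,\ell}\sqrt{\log x}/\sqrt{\log q}\bigr)\cdot(\log x)^{O(1)},
\]
which for $q \le (\log x)^B$ is dominated by $x\exp(-c''_{B,E,\ell}\sqrt{\log x})$ after adjusting constants.

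The main obstacle is controlling the possible Siegel zero when $\chi$ is real, since this contributes $x^{\beta_\chi}/\beta_\chi$, which is not covered by the zero-free region. Here I use the Siegel-type lower bound from \cref{lem:siegel-zero-free}, which gives $\beta_\chi \le 1 - c(\ell,E,\epsilon)q^{-\epsilon}$ for any $\epsilon > 0$. Choosing $\epsilon := 1/(3B)$ and using $q \le (\log x)^B$, one gets $1 - \beta_\chi \ge c(\log x)^{-1/3}$, so
\[
x^{\beta_\chi} \le x\exp\bigl(-c(\log x)^{2/3}\bigr) \ll x\exp\bigl(-c_{B,E,\ell}\sqrt{\log x}\bigr),
\]
comfortably absorbed into the target error. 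Combining the main integral, the zero-free region contribution, the Siegel-zero contribution, and the Perron truncation error, and finally reducing from primitive characters to the trivial case and our direct sum, yields the claimed bound.
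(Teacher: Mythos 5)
Your approach is essentially the paper's: the paper's proof simply cites \cite[Theorem~5.13]{IK04}, whose proof is exactly the truncated-Perron/contour-shift argument you sketch, and then disposes of the possible Siegel zero by combining the bound $\beta_\chi \le 1 - c(\ell,E,\eps)q^{-\eps}$ from \cref{lem:siegel-zero-free} with the restriction $q \le (\log x)^B$, just as you do (your choice $\eps = 1/(3B)$ is one explicit way to carry this out). Two corrections to your write-up, one cosmetic and one substantive. Cosmetically, shifting only to $\Re(s) = 1-\eta$ with $\eta \asymp 1/\log(q(T+3))$ never crosses $s=0$, so no residue arises there; by the zero-free region of \cref{lem:siegel-zero-free}, the only pole crossed is the possible Siegel zero. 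Substantively, your displayed intermediate bound $x\exp\bigl(-c'\sqrt{\log x}/\sqrt{\log q}\bigr)(\log x)^{O(1)}$ is neither what your setup produces nor sufficient: for $q$ of size $(\log x)^B$ its exponent has order $\sqrt{\log x}/\sqrt{\log\log x}$, so it is \emph{not} dominated by $x\exp\bigl(-c''\sqrt{\log x}\bigr)$ for any $c''>0$. The contour shift with your own choices $\eta \asymp 1/\log(qT)$ and $\log T \asymp \sqrt{\log x}$ in fact yields the stronger bound $x\exp\bigl(-c_{E,\ell}\log x/(\sqrt{\log x}+\log q)\bigr)(\log (xq))^{O(1)}$, which is the form recorded in the paper following \cite[Theorem~5.13]{IK04}; it is this form that gives $x\exp\bigl(-c_{B,E,\ell}\sqrt{\log x}\bigr)$ in the range $q \le (\log x)^B$, since there $\log q = O(\log\log x) = o(\sqrt{\log x})$. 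With that bookkeeping repaired, your argument is the paper's argument with the citation unpacked, and the Siegel-zero treatment is identical in substance.
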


\begin{proof}
Given our zero-free region from \cref{lem:siegel-zero-free}, \cite[Theorem~5.13]{IK04} provides an analogue of the prime number theorem depending on $\beta_\chi$. Adapting the cited theorem for $\Sym^\ell E \otimes \chi$ (whose $L$-function has no pole at $s = 1$ by \cite[p.~136]{IK04}), we get
\begin{equation}\label{eq:preliminary-pnt}
\psi(x, \Sym^\ell E \otimes\chi) = -\frac{x^{\beta_\chi}}{\beta_\chi} + O_{\ell, E}\biggl(x \exp \biggl(\frac{-c_{\ell, E} \log x}{\sqrt{\log x} + \log q}\biggr) (\log xq)^4\biggr)
\end{equation}
for $x \ge 1$, where the term depending on $\beta_\chi$ should be neglected if there is no Siegel zero. Note that a stronger version of the bound on $\Lambda_{\Sym^{\ell} E \otimes \chi}(n)$ required by \cite[Theorem~5.13]{IK04} is true in our case, namely \cref{eq:lambda-coefficient-bound}.
Combining the domain restriction $q\le (\log x)^B$ with the bound on $\beta_\chi$ from \cref{lem:siegel-zero-free} implies the desired result.
\end{proof}
In particular, by taking $\chi = 1$ in our analogue of the Siegel--Walfisz theorem and using partial summation, we can establish \cref{thm:Ul-prime-number-theorem}.
\begin{proof}[Proof of  \cref{thm:Ul-prime-number-theorem}]
Taking $\chi$ to be the trivial character in \cref{thm:siegel-walfisz}, we conclude that
\[
\psi(x, \Sym^{\ell} E) = O_{E,\ell}\bigl(x \exp\bigl(-c_{E,\ell} \sqrt{\log x}\bigr)\bigr).
\]
We rewrite the left hand side as  
\[
\psi(x, \Sym^{\ell} E) = \sum_{p^{m}\le x} \Lambda_{\Sym^{\ell} E}(p^m)= \sum_{\substack{p^{m} \le x\\ p\nmid N_E}} U_{\ell}(\cos m \theta_p) \log p+ \sum_{\substack{p^{m} \le x\\ p\mid N_E}} \Lambda_{\Sym^{\ell} E}(p^m).
\]
Using \cref{eq:lambda-coefficient-bound}, the second term in the right-hand side above is bounded by 
\[
(\ell + 1) \sum_{p \mid N_E} (\log_p x)(\log p) \ll_{E,\ell} \log x,
\]
which is absorbed by the error term in the estimate of $\psi(x, \Sym^\ell E)$. By a standard partial summation argument, we then establish that
\[
\sum_{\substack{p\le x\\p\nmid N_E}} U_{\ell}(\cos \theta_p) = O_{E,\ell}\bigl(x \exp\bigl(-c_{E,\ell}'\sqrt{\log x}\bigr)\bigr).
\]
Finally, by adding back the terms corresponding to finitely many primes $p$ dividing $N_E$, as well as using $|U_\ell(\cos \theta_p)| \le \ell + 1$, we may remove the restriction $p \nmid N_E$ and conclude our proof.
\end{proof}

\section{Weighted Bombieri--Vinogradov for Sato--Tate}\label{sec:bombieri}
In this section we prove our Bombieri--Vinogradov type estimate in the sense of \cref{conj:bombieri-sato-tate-final}. 
The structure of our approach is similar to that of Murty and Murty \cite{MM87}. First, we reduce the statement of the theorem to a similar estimate on a certain sum involving the twisted unramified part of the symmetric power $L$-function $L_\unr(s,\Sym^\ell E\otimes \chi)$, defined in \cref{eq:definition-of-L-unr}. 
After this, we use Gallagher's method to split the sum into three parts.
Within the sum, some parts can be directly estimated using large sieve inequalities. 
The parts involving $L_\unr(s,\Sym^\ell E\otimes \chi)$ and $L'_\unr(s,\Sym^\ell E\otimes \chi)$ will be further decomposed using Ramachandra's method and estimated separately. 
Finally, we shall choose suitable parameters and combine all the estimates to conclude the proof of Bombieri--Vinogradov in the setting of Sato--Tate.
\subsection{Initial reductions}\label{subsec:initial-reductions}

In what follows, we will sometimes drop the dependence on $E$ and $\ell$ when it is clear. In order to establish \cref{conj:bombieri-sato-tate-final}, we reduce the inequality to an analogous result for log-weighted Chebyshev functions. We define
\begin{align*}
\psi_{r, \ell, E}(x; q, a) &:= \frac{1}{r!}\sum_{\substack{p^m\le x\\p^m\equiv a\pmod{q}}}U_\ell(\cos m\theta_p)(\log p)\biggl(\log\frac{x}{p^m}\biggr)^r,\\
\psi_{r, \ell, E}(x, \chi) &:= \frac{1}{r!}\sum_{p^m\le x}U_\ell(\cos m\theta_p)(\log p)\biggl(\log\frac{x}{p^m}\biggr)^r\chi(p^m).
\end{align*}
We see that these are related by the orthogonality relation
\[\psi_{r,\ell, E}(x; q, a) = \frac{1}{\varphi(q)}\sum_\chi\overline{\chi}(a)\psi_{r,\ell,E}(x, \chi).\]
First we reduce \cref{conj:bombieri-sato-tate-final} to a traditional Chebyshev function version of Bombieri--Vinogradov.
\begin{proposition}\label{conj:bombieri-sato-tate-penultimate}
Fix $E$ and suppose we have automorphy of the $\ell$th symmetric power of $E$, where $\ell\ge 1$. Then for any $0 < \theta < 1/{\max(2, \ell-1)}$, and for all $B > 0$, we have that
\[\sum_{\substack{q\le x^\theta}}\sup_{\substack{(a, q) = 1\\y\le x}}|\psi_{0, \ell, E}(x; q, a)|\ll_{B, E, \ell}x(\log x)^{-B}.\]
\end{proposition}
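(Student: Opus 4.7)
The plan is to adapt the approach of Murty--Murty \cite{MM87}. First, I would apply orthogonality of Dirichlet characters to write
\[
\psi_{0,\ell,E}(y;q,a) = \frac{1}{\varphi(q)}\sum_{\chi \bmod q} \overline{\chi}(a)\, \psi_{0,\ell,E}(y,\chi),
\]
and reduce to primitive characters by expressing each $\chi \bmod q$ as induced by a primitive $\chi^* \bmod q^*$ with $q^* \mid q$. The correction from the differing Euler factors at primes dividing $q/q^*$ is controlled by the coefficient bounds $|a_n|, |b_n|, |c_n|$ from \cref{sec:parameter-bounds} combined with \cref{lem:divisor-power-sum}. The contribution from $q^* \leq (\log x)^D$ (including the principal character) is directly handled by \cref{thm:siegel-walfisz}, so it remains to bound a sum of the form $\sum_{(\log x)^D < q^* \leq x^\theta} \varphi(q^*)^{-1} \sum_{\chi^* \bmod q^*}^{*} \sup_{y \leq x} |\psi_{0,\ell,E}(y,\chi^*)|$ over nontrivial primitive characters.

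For such $\chi$, the next step is to apply Perron's formula, representing $\psi_{r,\ell,E}(y,\chi)$ for a suitable integer $r$ as a contour integral of $-(L_\unr'/L_\unr)(s,\Sym^\ell E \otimes \chi)\, y^{s+r}/(s(s+1)\cdots(s+r))$. Shifting the contour leftward using the zero-free region of \cref{lem:siegel-zero-free} and the functional equation for $L_\unr$ established in \cref{sec:parameter-bounds}, Gallagher's method splits the resulting contribution into three pieces: a Dirichlet-series piece whose average over primitive characters is bounded directly by the multiplicative large sieve, plus two pieces that are mean values of $L_\unr(s,\Sym^\ell E \otimes \chi)$ and $L_\unr'(s,\Sym^\ell E \otimes \chi)$ along a vertical line inside the critical strip. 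Converting $\psi_{r,\ell,E}$ back to $\psi_{0,\ell,E}$ is then a standard dyadic-summation manoeuvre.

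The two $L$-function mean-value pieces would be attacked by Ramachandra's method: apply an approximate functional equation and dyadically decompose the resulting partial Dirichlet sums, then apply the large sieve block-by-block using the divisor-type coefficient bounds and the sub-convex bounds on $L_\unr$ and $L_\unr'$ derived from \cref{lem:L-L'-sizes}. The main obstacle will be balancing parameters: the analytic-conductor bound $A_\chi \leq A q^{\ell+1}$ from \cref{conj:gamma-factor} inflates the convexity factor on the critical line by a power of $q$ growing with $\ell$, limiting the range of moduli for which the overall estimate still beats $x(\log x)^{-B}$. Carefully tuning the Perron smoothing parameter $r$, the contour height, and the Ramachandra block sizes should yield the advertised level of distribution $\theta < 1/\max(2,\ell-1)$; the $\max$ with $2$ reflects that for small $\ell$ the classical Bombieri--Vinogradov range $\theta < 1/2$ persists, while for larger $\ell$ the enlarged conductor becomes the binding constraint.
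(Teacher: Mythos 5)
Your outline of the analytic machinery (orthogonality, reduction to primitive characters, Siegel--Walfisz via \cref{thm:siegel-walfisz} for small moduli, Perron with Riesz weights, Gallagher's splitting, the large sieve, Ramachandra's method, and the conductor bound $A_\chi \le Aq^{\ell+1}$ forcing the level $1/\max(2,\ell-1)$) is essentially the paper's proof of the $r$-weighted estimate, i.e.\ of \cref{conj:bombieri-sato-tate-antepenultimate}. But the proposition at hand is exactly the descent from that weighted statement to the $r=0$ case, and this is where your proposal has a genuine gap: you dispose of the conversion of $\psi_{r,\ell,E}$ back to $\psi_{0,\ell,E}$ as ``a standard dyadic-summation manoeuvre.'' It is not a dyadic summation, and the standard manoeuvre does not apply directly here. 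In Murty--Murty one descends from weight $r+1$ to weight $r$ by differencing in the argument (comparing the weighted count at $y(1+\delta)$ and at $y$ for a suitable small $\delta$), and the error analysis crucially uses that every term $(\log p)(\log(y/p^m))^r$ is nonnegative, so terms entering or leaving the range of summation can be discarded with a definite sign. In the Sato--Tate setting the weights $U_\ell(\cos m\theta_p)\log p$ change sign, so this positivity/monotonicity argument breaks down as stated, and your proposal offers no substitute.

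The paper's proof of this proposition is precisely the repair of that step: one works with $g_r(x;q,a)=\psi_{r,\ell,E}(x;q,a)+(\ell+1)\psi_r(x;q,a)$, which is termwise nonnegative because $|U_\ell(\cos m\theta_p)|\le \ell+1$, runs the Murty--Murty differencing on $g_r$, and then removes the added classical piece by invoking the ordinary Bombieri--Vinogradov theorem (available since $\theta<1/2$) to show that the $\psi_r(x;q,a)$ terms, after subtracting their main terms, contribute only an admissible error on average over $q\le x^\theta$; see \cite{MM87}. Without this (or some equivalent device) your conversion step fails, so the proposal as written does not yield the $r=0$ statement. A secondary, smaller point: you propose shifting the contour for the large-modulus range ``using the zero-free region of \cref{lem:siegel-zero-free}''; in the actual argument the zero-free region and the Siegel-type lower bound enter only through the Siegel--Walfisz range $q\le(\log x)^{\gamma}$, while the large moduli are handled entirely by mean-value estimates on the lines $\Re s=1+1/\log x$ and $\Re s=1/2$ via Gallagher's identity and Ramachandra's method, with no appeal to zeros of the twisted $L$-functions.
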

\begin{proof}[Proof that  \cref{conj:bombieri-sato-tate-penultimate} implies \cref{conj:bombieri-sato-tate-final}]
A standard exercise in partial summation.
\end{proof}
Note that we do not care about the dependence of this inequality on $B, E$ and $\ell$, and it will in fact end up being ineffective due to the use of Siegel--Walfisz for Sato--Tate from \cref{thm:siegel-walfisz}. 
We show that if $r\ge 0$ is fixed depending on $\theta, \ell$, and if we have a Bombieri--Vinogradov type bound for $\psi_{r, \ell}$, then we can deduce the above \cref{conj:bombieri-sato-tate-penultimate}.
\begin{proposition}\label{conj:bombieri-sato-tate-antepenultimate}
Fix $E$ and suppose we have automorphy of the $\ell$th symmetric power of $E$, where $\ell\ge 1$. 
Fix any $0 < \theta < 1/{\max(2, \ell-1)}$.
Then there exists an $r = r(\theta, \ell)\ge 0$ so that for all $B > 0$ we have that
\[\sum_{\substack{q\le x^\theta}}\sup_{\substack{(a, q) = 1\\y\le x}}|\psi_{r, \ell, E}(x; q, a)|\ll_{B, E, \ell}x(\log x)^{-B}.\]
\end{proposition}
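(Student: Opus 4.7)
The plan is to follow Murty and Murty \cite{MM87}. First, by orthogonality $\psi_{r,\ell,E}(y;q,a) = \varphi(q)^{-1} \sum_\chi \overline{\chi}(a) \psi_{r,\ell,E}(y,\chi)$, and a standard reduction passes the outer sum to one over primitive characters with only a negligible additional error. I would then split the moduli range into small moduli $q \leq Q_0 := (\log x)^{C(B,\ell)}$ and large moduli $Q_0 < q \leq x^\theta$. For small moduli, I would invoke the Siegel--Walfisz estimate (\cref{thm:siegel-walfisz}) together with partial summation to bound each $\psi_{r,\ell,E}(y,\chi)$ by $O_{B,E,\ell}(y(\log y)^{r}\exp(-c\sqrt{\log y}))$, which is more than enough after summing over $q\leq Q_0$.

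For large moduli, I would start from Perron's formula
\[
\psi_{r,\ell,E}(y,\chi) = \frac{1}{2\pi i} \int_{(c)} \left(-\frac{L_\unr'}{L_\unr}\right)\!(s, \Sym^\ell E \otimes \chi) \frac{y^s}{s^{r+1}}\,ds + O_E((\log y)^{r+1}),
\]
the error absorbing the finitely many ramified primes dividing $N_E$. Shifting the contour to $\Re s = -1/2$ picks up a residue at $s=0$; using the functional equation and the bounds of \cref{sec:parameter-bounds}, this residue contributes $\ll (\log q)^{O(1)}(\log y)^r$ per character, which is negligible after the final averaging. On the shifted line I would apply the functional equation from \cref{sec:symmetric-powers} to flip to an integral on $\Re s = 3/2$ involving $\wt\eta(1-s,\overline\chi)\cdot(-L_\unr'/L_\unr)(s, \Sym^\ell E \otimes \overline\chi)$, where the log-derivative expands as $\sum_n c_n\overline\chi(n)\Lambda(n) n^{-s}$ with $|c_n|\leq \ell+1$. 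Truncating the Dirichlet series at length $N$ and the vertical integral at height $T$ incurs an acceptable error provided $r = r(\theta,\ell)$ is chosen large enough so that the factor $(|1-s|+1)^{-r-1}$ beats the polynomial growth of $|\wt\eta|$ and $|L'_\unr/L_\unr|$ supplied by \cref{lem:L-L'-sizes} and its subsequent remark.

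Once truncated, the sum over characters of the resulting bilinear form in $\overline\chi(n)$ is handled by the multiplicative large sieve
\[
\sum_{q \leq Q}\frac{q}{\varphi(q)} \sum_{\chi \bmod q}^{*} \left|\sum_{n \leq N}b_n\chi(n)\right|^{2} \ll (Q^2 + N)\sum_{n \leq N}|b_n|^{2},
\]
combined with Cauchy--Schwarz to pass from $L^2$ to $L^1$ averages. The main obstacle, and the source of the restriction $\theta < 1/\max(2,\ell-1)$, is the conductor bookkeeping: the analytic conductor $A_\chi \leq A q^{\ell+1}$ forces the effective Dirichlet-series length after the functional equation to be of size $\asymp (qT)^{\ell+1}/y$, so balancing $Q^2 = x^{2\theta}$ against this length in the large sieve yields exactly the stated threshold (the classical $1/2$ when $\ell \leq 3$, and $1/(\ell-1)$ beyond). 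Optimizing $N$, $T$, and $r$ simultaneously against this large-sieve main term is the place where the most delicate computation lies.
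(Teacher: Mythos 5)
There is a genuine gap at the heart of your large-moduli argument: you propose to shift the contour of $\int_{(c)}(-L_\unr'/L_\unr)(s,\Sym^\ell E\otimes\chi)\,x^s s^{-(r+1)}\,ds$ from $\Re s>1$ down to $\Re s=-1/2$, ``picking up a residue at $s=0$.'' But the logarithmic derivative is not holomorphic in the critical strip: it has a pole at every nontrivial zero $\rho$ of $L(s,\Sym^\ell E\otimes\chi)$ (and at the extra zeros of $L_\unr$ coming from the removed ramified Euler factors), each contributing a residue of size $x^{\beta}/|\rho|^{r+1}$ with $\beta=\Re\rho$ possibly close to $1$. Controlling the sum of these residues, averaged over $\chi$ and $q\le x^\theta$, would require GRH or a zero-density estimate for the twisted symmetric-power $L$-functions, neither of which is available here (the zero-free region of \cref{lem:siegel-zero-free} only covers a narrow strip near $\Re s=1$ and is what powers the small-moduli/Siegel--Walfisz step, not a full contour shift). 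So the step ``shift to $\Re s=-1/2$, then flip by the functional equation'' does not go through as stated, and it is exactly the step the paper is organized to avoid.

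The paper's route replaces your contour shift by a Gallagher/Vaughan-type identity: with $F_z,M_z$ Dirichlet polynomials (partial sums of $-L_\unr'/L_\unr$ and $1/L_\unr$ up to $z$) and $G_z$ the tail, one writes $-L_\unr'/L_\unr=G_z(1-L_\unr M_z)+F_z(1-L_\unr M_z)-L_\unr'M_z$. Only the \emph{holomorphic} pieces $F_z(1-L_\unr M_z)$ and $L_\unr'M_z$ are moved to $\Re s=1/2$; the $G_z$-piece, which carries the non-holomorphy of $-L'/L$, stays on $\Re s=1+1/\log x$, where the large sieve and \cref{lem:divisor-power-sum} give square-root cancellation in $q$. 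The resulting half-line mean squares of $L_\unr$ and $L_\unr'$ are then estimated not by flipping the log-derivative but by Ramachandra's smoothed approximate functional equation (the identity with the damping factor $e^{-n/U}$), the large sieve, and the Phragm\'en--Lindel\"of bounds of \cref{lem:L-L'-sizes}, with $r$ taken of size roughly $\tfrac{\ell+1}{2}+\tfrac{(\ell+1)^2}{2\eps}$ so the $t>T$ tail converges; finally \cref{prop:bombieri-conversion} converts the primitive-character average back to arithmetic progressions and \cref{thm:siegel-walfisz} handles $q\le(\log x)^\gamma$. Your small-moduli treatment, your choice of large $r$ to tame the polynomial growth of $\wt\eta$, and your conductor bookkeeping $A_\chi\le Aq^{\ell+1}$ explaining the threshold $\theta<1/\max(2,\ell-1)$ are all consistent with the paper, but without a substitute for the decomposition above (or a zero-density input you have not supplied), the central contour-shifting step fails.
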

\begin{proof}[Proof that \cref{conj:bombieri-sato-tate-antepenultimate} implies \cref{conj:bombieri-sato-tate-penultimate}]
The argument is similar to the analogous result in \cite{MM87}, with some alterations due to positivity issues. In order to work with a nonnegative functions, we look at $g_r(x; q, a) = \psi_{r, \ell, E}(x; q, a) + (\ell+1)\psi_r(x; q, a)$, where we define
\[\psi_r(x; q, a) := \sum_{\substack{p^m\le x\\p^m\equiv a\pmod{q}}}(\log p)\biggl(\log\frac{x}{p^m}\biggr)^r,\]
and then relate $g_{r+1}$ and $g_r$ as in \cite{MM87}. 
To finish the argument, we use the regular Bombieri--Vinogradov theorem to show that the contribution of the $\psi_r(x; q, a)$ terms does not greatly affect the estimates involved.
\end{proof}
The remainder of this section is dedicated to proving \cref{conj:bombieri-sato-tate-antepenultimate}. From now on we will take $E$ and $\ell\ge 1$ to be fixed. We will choose $r = r(\theta, \ell)$ later. With foresight, we will take $r = \frac{\ell+1}{2} + \frac{(\ell+1)^2}{2\eps}$ for some small $0 < \eps < \frac{1}{\max(2, \ell - 1)} - \theta$.

Let $(C)$ denote the contour $C + it$ for $t\in (-\infty, \infty)$. 
Using Mellin inversion, we have for any $C > 1$ that
\begin{equation*}\label{eq:perron-integral-r-chi}
\psi_{r,\ell,E}(x, \chi) = \frac{1}{2\pi i}\int_{(C)}-\frac{L_\unr'}{L_\unr}(s,\Sym^\ell E\otimes  \chi)\frac{x^s}{s^{r+1}}\,ds + O(\ell(\log x)^{r+1}(\log qN_E)),
\end{equation*}
the error coming from the contribution of the primes $p$ dividing $qN_E$.

\subsection{Gallagher's method and the large sieve}\label{subsec:gallagher}
We use the version of the large sieve of Gallagher \cite{G70}. From now on, $\sum_\chi$ denotes a sum over all Dirichlet characters $\pmod{q}$ and $\sum_\chi^*$ a sum over all primitive characters $\pmod{q}$.
\begin{lemma}[Large sieve]
If $\sum_n |A_n|<\infty$ and $T\geq 1$, we have that
\[\sum_{q\leq Q}{\sum_{\chi}}^*\int_{-T}^T\Biggl|\sum_{n=1}^\infty A_n \chi(n)n^{it}\Biggr|^2 dt \ll \sum_{n=1}^\infty |A_n|^2(n+Q^2 T).\]
\end{lemma}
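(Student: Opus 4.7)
The plan is to prove this by combining the classical multiplicative large sieve for primitive Dirichlet characters with a continuous mean-value estimate in the $t$-aspect, following Gallagher's hybrid approach in \cite{G70}.

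First, I would use Gauss sums to convert primitive characters into additive ones. For $\chi$ primitive mod $q$, the identity $\chi(n)\tau(\bar\chi) = \sum_{a\bmod q}\bar\chi(a)\,e(an/q)$ together with $|\tau(\bar\chi)|^2 = q$ and orthogonality over the characters yield an inequality of the shape
\[
\sum_\chi{}^*\Bigl|\sum_n A_n\chi(n)n^{it}\Bigr|^2 \le \frac{\varphi(q)}{q}\sum_{\substack{a\bmod q\\(a,q)=1}}\Bigl|\sum_n A_n\, e(an/q)\,n^{it}\Bigr|^2.
\]
This reduces the problem to an additive hybrid large sieve at rational points.

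Second, I would collect the contribution from all $q\le Q$ and integrate in $t$. The key observation is that the Farey fractions $a/q$ with $q\le Q$, $(a,q)=1$ form a $\delta$-spaced subset of $\mathbb{R}/\mathbb{Z}$ with $\delta\gg 1/Q^2$. For such well-spaced points, a duality argument (or direct application of the large sieve for exponential sums) reduces matters to controlling a single continuous mean value $\int_{-T}^{T}|\sum_n A_n n^{it}|^2\,dt$, uniformly in the spacing.

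Third, the required Montgomery--Vaughan type mean-value bound
\[
\int_{-T}^T\Bigl|\sum_n A_n n^{it}\Bigr|^2dt \ll \sum_n |A_n|^2 (n+T)
\]
follows by expanding the square: the diagonal produces $\asymp T\sum_n|A_n|^2$, while the off-diagonal $\sum_{m\ne n} A_m\overline{A_n}/\log(m/n)$ is controlled by Hilbert's inequality, producing the $\sum_n|A_n|^2 n$ contribution thanks to the logarithmic spacing of the frequencies $\log n$. Combining everything, the $Q^2$ from Farey spacing multiplies the $T$ from the continuous aspect to give the $Q^2T$ term, while the $n$ term is inherited from Montgomery--Vaughan.

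The main technical obstacle is the coupling of the discrete additive sieve at Farey fractions with the continuous $t$-integration; Gallagher resolves this cleanly via a smooth cutoff function and Plancherel, so the most efficient writeup would simply invoke Theorem~2 of \cite{G70} rather than reproving the hybrid inequality from scratch.
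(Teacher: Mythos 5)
The paper does not actually prove this lemma: it is quoted verbatim as Gallagher's hybrid large sieve (Theorem~2 of \cite{G70}), so your bottom line --- ``invoke Theorem~2 of \cite{G70}'' --- is exactly what the paper does, and at that level your proposal and the paper coincide.

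However, the alternative derivation you sketch has a genuine gap at the coupling step, and you should not present it as if it were routine. Your step two asserts that the Farey-spacing/duality argument ``reduces matters to controlling a single continuous mean value $\int_{-T}^{T}\bigl|\sum_n A_n n^{it}\bigr|^2\,dt$, uniformly in the spacing.'' No such clean decoupling is available: if you apply the additive large sieve over the $\delta$-spaced Farey points for each fixed $t$ and then integrate, you get
\[
\sum_{q\le Q}{\sum_{\chi}}^{*}\int_{-T}^{T}\Bigl|\sum_n A_n\chi(n)n^{it}\Bigr|^2\,dt \ll T\,(N+Q^2)\sum_n|A_n|^2,
\]
whose $NT$ term is far weaker than the claimed $\sum_n|A_n|^2(n+Q^2T)$; and simply multiplying the discrete bound $(N+Q^2)$ by the Montgomery--Vaughan bound $(N+T)$ is not a valid operation either. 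This is precisely why the hybrid sieve needs a real idea. Gallagher's argument runs in the opposite order from yours: first his lemma (a Plancherel argument with a smooth cutoff) converts the $t$-integral into
\[
\int_{-T}^{T}\Bigl|\sum_n a_n n^{it}\Bigr|^2\,dt \ll T^2\int_0^\infty\Bigl|\sum_{x<n\le xe^{1/T}}a_n\Bigr|^2\,\frac{dx}{x},
\]
and only then is the classical discrete multiplicative large sieve applied to each short block of length $\asymp x/T$, giving a factor $(x/T+Q^2)$; integrating in $x$ recovers $\sum_n|A_n|^2(n+Q^2T)$. So if you want to include a proof sketch rather than a bare citation, replace your step two with this block decomposition; otherwise, citing \cite{G70} as the paper does is the right call.
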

We will use this to estimate certain sums related to Bombieri--Vinogradov. They will be used to handle the averaging and cancellation that is expected in the regime of large $q$ values. Write the following partial sums, defined for $\Re s > 1$:
\begin{equation}
\begin{split}
F_z(s,\chi) &:= \sum_{n\leq z}\Lambda(n)\chi(n)c_nn^{-s},\\
G_z(s,\chi) &:= \sum_{n>z} \Lambda(n)\chi(n)c_nn^{-s} = -\frac{L_\unr'}{L_\unr}(s,\Sym^\ell E\otimes \chi) - F_z(s,\chi),\\
M_z(s,\chi) &:= \sum_{n\leq z}\chi(n)b_n n^{-s}.
\end{split}
\end{equation}
Then we can write
\[-\frac{L_\unr'}{L_\unr}(s,\Sym^\ell E\otimes \chi) = G_z(1-L_\unr(\chi) M_z)+ F_z(1-L_\unr(\chi) M_z) - L'_\unr(\chi) M_z,\]
where we are suppressing the $\chi$ dependence. Hence, for $C = 1 + (\log x)^{-1}$ and $r > \frac{\ell+1}{4}$, we have 
\begin{align}\label{eq:gallagher-identity}
\frac{1}{2\pi i}\int_{(C)}& -\frac{L_\unr'}{L_\unr}(s,\Sym^\ell E\otimes \chi)\frac{x^s}{s^{r+1}}ds \nonumber\\
=& \frac{1}{2\pi i}\int_{(C)}G_z(1 - L_\unr(\chi) M_z)\frac{x^s}{s^{r+1}}\,ds \nonumber\\
&\quad + \frac{1}{2\pi i}\int_{(\frac{1}{2})}F_z(1 - L_\unr(\chi) M_z)\frac{x^s}{s^{r+1}}\,ds\nonumber +\frac{1}{2\pi i}\int_{(\frac{1}{2})}L_\unr'(\chi)M_z\frac{x^s}{s^{r+1}}\,ds\nonumber\\
\ll & x\int_{(C)}(|G_z|^2 + |1 - L_\unr(\chi) M_z|^2)\frac{1}{|s|^{r+1}}\,|ds|\nonumber\\
&\quad+ x^{\frac{1}{2}}\int_{(\frac{1}{2})}(1+|F_z|^2+|M_z|^2+|F_zM_z|^2+|L_\unr(\chi)|^2+|L_\unr'(\chi)|^2)\frac{1}{|s|^{r+1}}\,|ds|, 
\end{align}
where shifting the contour is acceptable since $F_z,M_z$ are holomorphic and $r > \frac{\ell + 1}{4}$. We thus see that the right side is an upper bound for $\sup_{y\le x}|\psi_{r,\ell}(y, \chi)|$, with an error of $O(\ell(\log x)^{r+1}(\log qN_E))$.

Using the large sieve, \cref{lem:divisor-power-sum}, bounds on $a_n,b_n,c_n$, and partial summation to bound the resulting sums yields
\begin{equation}\label{eq:Gz-LMz-FzMz-terms}
\begin{split}
\sum_{q\leq Q}{\sum_{\chi}}^*\int_{(C)} |G_z|^2 \frac{|ds|}{|s|^{r+1}} &\ll (\ell+1)^2(\log x)^3 \biggl(1+\frac{Q^2}{z}\biggr),\\
\sum_{q\leq Q}{\sum_{\chi}}^* \int_{(C)}|1-L_\unr(\chi) M_z|^2\frac{|ds|}{|s|^{r+1}}
&\ll_\ell (\log x)^{(\ell + 2)^22^{2\ell + 2}}\biggl(1+\frac{Q^2}{z}\biggr),\\
\sum_{q\leq Q}{\sum_{\chi}}^* \int_{(\frac{1}{2})}(1+|F_z|^2+|M_z|^2+|F_zM_z|^2 )\frac{|ds|}{|s|^{r+1}}&\ll_\ell (Q^2 + z^2)(\log z)^{2^{2\ell + 4} + 2}.
\end{split}
\end{equation}

\subsection{Ramachandra's method for mean square estimates}\label{subsec:ramachandra}
It remains to bound
\[\sum_{q\leq Q}{\sum_{\chi}}^*\int_{(\frac{1}{2})}(|L_\unr(\chi)|^2+|L_\unr'(\chi)|^2)\frac{|ds|}{|s|^{r+1}}.\]
We use a method of Ramachandra \cite{Ram74}. We first estimate
\[\sum_{q\leq Q}{\sum_{\chi}}^*\int_{(\frac{1}{2})}|L_\unr(\chi)|^2\frac{|ds|}{|s|^{r+1}}.\]
The key identity is the following.
\begin{equation}\label{eq:twisted-L-identity}
L_\unr(s,\Sym^\ell E\otimes \chi) = \sum_{n=1}^\infty a_n\chi(n)e^{-\frac{n}{U}}n^{-s} - \frac{1}{2\pi i}\int_{(C_1)}L_\unr(s+w,\Sym^\ell E\otimes \chi)U^w\Gamma(w)dw,
\end{equation}
where $U > 0$ and $C_1 = -\frac{1}{2}-\frac{1}{\log V}$, $V > 1$. With foresight, we choose
\begin{equation}\label{eq:choice-of-U-V}
U = V = (AQ^{\ell+1}T^{\ell+1})^{\frac{1}{2}}, \text{ where } T = Q^{\frac{\eps}{\ell + 1}}.
\end{equation}
We split the second term of \eqref{eq:twisted-L-identity} into two parts, based on whether $n>U$ or $n\leq U$. The part with $n\leq U$ is holomorphic, so we can move the contour to $(C_2)$ with $C_2 = -1/\log V$. This gives
\begin{align}\label{eq:damped-L-sum}
L_\unr(s,\Sym^\ell E\otimes \chi) &= \sum_{n=1}^\infty a_n\chi(n)e^{-\frac{n}{U}}n^{-s} - \frac{1}{2\pi i}\int_{(C_1)}\widetilde\eta(s+w,\chi)\sum_{n>U}a_n\overline{\chi}(n)n^{s+w-1}U^w\Gamma(w)\,dw\nonumber\\ & \qquad -\frac{1}{2\pi i}\int_{(C_2)}\widetilde\eta(s+w,\chi)\sum_{n\leq U}a_n\overline{\chi}(n)n^{s+w-1}U^w\Gamma(w)\,dw
\end{align}
when $\Re s = 1/2$, using the functional equation. We will truncate integrals to the height $T$, as chosen above. Hence, the truncated integral of $|L_\unr(\chi)|^2$ is, by Cauchy--Schwarz inequality,
\begin{align*}
\int_{\frac{1}{2}-iT}^{\frac{1}{2}+iT} |L_\unr(s,\Sym^\ell &E\otimes \chi)|^2 ds\\
&\ll \int_{-T}^T\Biggl|\sum_{n=1}^\infty a_n\chi(n)e^{-\frac{n}{U}}n^{-\frac{1}{2}+it}\Biggr|^2dt \\
&\quad +\int_{-T}^T\int_{(C_1)}\Biggl|\widetilde\eta\biggl(\frac{1}{2}+it+w,\chi\biggr)\sum_{n>U}a_n\overline{\chi}(n)n^{-\frac{1}{2}+it + w}U^w\Gamma(w)\Biggr|^2dwdt \\
&\quad +\int_{-T}^T\int_{(C_2)}\Biggl|\widetilde\eta\biggl(\frac{1}{2}+it+w,\chi\biggr)\sum_{n\leq U}a_n\overline{\chi}(n)n^{-\frac{1}{2}+it + w}U^w\Gamma(w)\Biggr|^2dwdt.
\end{align*}
Let the right-hand side be $S_1 + S_2 + S_3$. Sum over primitive characters $\chi \pmod q$ and $q\leq Q$, and write the resulting right-hand side as $\Sigma_1+\Sigma_2+\Sigma_3$ in the obvious way. We use the large sieve inequality and use bounds on $\widetilde\eta(s, \chi)$ (see the remark following \cref{lem:L-L'-sizes}). After that, using \cref{lem:divisor-power-sum} and partial summation, we find that
\begin{align*}
\Sigma_1&\ll_\ell (U + Q^2T)(\log U)^{(\ell + 1)^2},\\
\Sigma_2 &\ll_{\ell,\eps'} (U + Q^2T)(\log U)^{(\ell+1)^2}[(AQ)^{\eps'}],\\
\Sigma_3&\ll_{\ell,\eps'}\biggl(1+\frac{Q^2T}{U}\biggr)U(\log V)^2(\log U)^{(\ell+1)^2}[(AQ)^{\eps'}].
\end{align*}
We also need to bound the integral from $T$ to $\infty$. We have $r > \frac{\ell+1}{2}$, and thus we find
\begin{align*}
\int_T^\infty\biggl|L_\unr\biggl( \frac{1}{2}+it,\Sym^\ell E\otimes\chi\biggr)\biggr|^2\frac{dt}{\bigl|\frac{1}{2}+it\bigr|^{r+1}}
&\ll_{r,\ell,\eps'}\frac{(AQ^{\ell+1})^{\frac{1}{2}}Q^2(\log(AQ^{\ell+1}))^{2\ell+2}(\log T)^{2\ell+2}}{T^{r-\frac{\ell+1}{2}}}[(AQ)^{\eps'}].
\end{align*}
Combining the above estimates and choosing $U,V,T$ as in \cref{eq:choice-of-U-V}, we obtain
\begin{equation}\label{eq:L-term}
\sum_{q\le Q}{\sum_\chi}^*\int_{(\frac{1}{2})}|L_\unr(\chi)|^2\frac{|ds|}{|s|^{r+1}}\ll_{r,\ell,\eps,\eps'} (A^{\frac{1}{2}}Q^{\frac{\ell+1+\eps}{2}}+Q^{2+\frac{\eps}{\ell+1}}+A^{\frac{1}{2}}Q^2)[(AQ)^{\eps'}].
\end{equation}
Now we move on to estimating
\[\sum_{q\le Q}{\sum_\chi}^*\int_{(\frac{1}{2})}|L_\unr'(\chi)|^2\frac{|ds|}{|s|^{r+1}}.\]
The method is essentially the same. Differentiating \eqref{eq:damped-L-sum} and split now into five terms analogous to the above. The Phragm\'en--Lindel\"of estimate \cref{lem:L-L'-sizes} for $L'$ is bigger by a factor of $(\log[AQ^{\ell+1}(|t| + 2)^{\ell+1}])^{\ell+1}$; this contributes an extra factor of $(\log[AQ^{\ell+1}T^{\ell+1}])^{2\ell+2}$, when squared, to the integral from $T$ to $\infty$. Similar terms are added in to the other factors. So overall we have that
\begin{equation}\label{eq:L'-term}
\sum_{q\le Q}{\sum_\chi}^*\int_{(\frac{1}{2})}|L_\unr(\chi)'|^2\frac{|ds|}{|s|^{r+1}}\ll_{r,\ell,\eps,\eps'} (A^{\frac{1}{2}}Q^{\frac{\ell+1+\eps}{2}}+Q^{2+\frac{\eps}{\ell+1}}+A^{\frac{1}{2}}Q^2)[(AQ)^{\eps'}].
\end{equation}

\subsection{Concluding Bombieri--Vinogradov for Sato--Tate}\label{subsec:concluding-bombieri}
Combining \eqref{eq:Gz-LMz-FzMz-terms}, \eqref{eq:L-term}, and \eqref{eq:L'-term}, we obtain
\begin{equation}\label{eq:gallagher-final}
\begin{split}
\sum_{q\le Q}{\sum_\chi}^*\sup_{y\le x}|\psi_{r,\ell}(y, \chi)|&\ll_{\ell,\eps,\eps'} x(\log x)^{(\ell + 2)^22^{2\ell + 2}}\biggl(1+\frac{Q^2}{z}\biggr)+x^{\frac{1}{2}}(Q^2 + z^2)(\log z)^{2^{2\ell + 4} + 2}\\
&\qquad+ x^{\frac{1}{2}}[(AQ)^{\eps'}](A^{\frac{1}{2}}Q^{\frac{\ell+1+\eps}{2}}+Q^{2+\frac{\eps}{\ell+1}}+A^{\frac{1}{2}}Q^2).
\end{split}
\end{equation}
Now we take $z = Q(\log x)^\gamma$ with $\gamma > D + (\ell+1)^22^{2\ell+2}$. Then if $Q$ lies within the following range,
\[(\log x)^\gamma\le Q\le \min(x^{\frac{1}{2}}(\log x)^{-2\gamma - D - 2^{2\ell + 4} - 2}, x^{\frac{1}{2}-\eps-\eps'}, x^{\frac{1}{\ell-1} - \eps-2\eps'}),\]
we have that
\[\frac{1}{Q}\sum_{q\le Q}{\sum_\chi}^*\sup_{y\le x}|\psi_{r,\ell}(y, \chi)|\ll_{\ell,\eps,\eps',E}x(\log x)^{-D}.\]
(If $\ell = 1$ we can take $x^{\frac{1}{\ell-1}}=+\infty$ without issue.) Using \cref{thm:siegel-walfisz} and integration to convert from $\psi_{0,\ell}$ to $\psi_{r,\ell}$, we can prove this result for $Q\le (\log x)^\gamma$ as well. In our application of \cref{thm:siegel-walfisz}, we must note that the difference between $\psi_{0, \ell}$ and $\psi(\cdot,\Sym^\ell E\otimes\chi)$ due to primes $p\mid qN_E$ is bounded by $O(\ell(\log x)(\log qN_E))$ as in \cref{eq:perron-integral-r-chi}. Thus, the following result holds.
\begin{proposition}\label{prop:almost-bombieri}
Given hypotheses in \cref{conj:bombieri-sato-tate-final}, suppose $Q$ satisfies
\[1\le Q\le \min(x^{\frac{1}{2}}(\log x)^{-2\gamma - D - 2^{2\ell + 4} - 2}, x^{\frac{1}{2}-\eps-\eps'}, x^{\frac{1}{\ell-1} - \eps-2\eps'}).\]
Then we have that
\[\frac{1}{Q}\sum_{q\le Q}{\sum_\chi}^*\sup_{y\le x}|\psi_{r,\ell,E}(y, \chi)|\ll_{\ell,\eps,\eps',E,\gamma}x(\log x)^{-D}.\]
\end{proposition}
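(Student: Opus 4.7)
The plan is to combine the mean-square estimates assembled in Subsections 8.2 and 8.3 into the single bound \eqref{eq:gallagher-final}, and then to split the range of $Q$ into two regimes: a moderate-to-large $Q$ regime where \eqref{eq:gallagher-final} already suffices, and a small $Q$ regime handled directly by Siegel--Walfisz (\cref{thm:siegel-walfisz}).

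First I would record that, by applying Cauchy--Schwarz to the Mellin-inverted integral \eqref{eq:gallagher-identity}, the quantity $\sup_{y\le x}|\psi_{r,\ell,E}(y,\chi)|$ is majorized, up to an additive $O(\ell(\log x)^{r+1}\log(qN_E))$ arising from prime powers with $p\mid qN_E$, by the right-hand side of \eqref{eq:gallagher-identity}. Summing over primitive characters $\chi \pmod q$ and $q\le Q$, the large sieve yields \eqref{eq:Gz-LMz-FzMz-terms}, and Ramachandra's method yields \eqref{eq:L-term} and \eqref{eq:L'-term}; adding these produces \eqref{eq:gallagher-final}.

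Next, I would set $z = Q(\log x)^\gamma$ with $\gamma$ at least $D+(\ell+2)^2 2^{2\ell+2}$ and verify that, in the range $(\log x)^\gamma \le Q \le \min(x^{1/2}(\log x)^{-2\gamma-D-2^{2\ell+4}-2},\, x^{1/2-\eps-\eps'},\, x^{1/(\ell-1)-\eps-2\eps'})$, each term on the right of \eqref{eq:gallagher-final}, after dividing by $Q$, is $\ll x(\log x)^{-D}$. Explicitly: the piece $x(\log x)^{(\ell+2)^2 2^{2\ell+2}}(1+Q^2/z)$ becomes $\ll x(\log x)^{(\ell+2)^2 2^{2\ell+2}-\gamma}$; the piece $x^{1/2}(Q^2+z^2)(\log z)^{2^{2\ell+4}+2}$ is controlled by the middle constraint on $Q$; and the Ramachandra terms $x^{1/2}(AQ)^{\eps'}(A^{1/2}Q^{(\ell+1+\eps)/2}+Q^{2+\eps/(\ell+1)}+A^{1/2}Q^2)$ are controlled respectively by the rightmost, middle, and middle constraints, with the factor $(AQ)^{\eps'}$ absorbed into the dependence on $E$ and $\eps'$ via $A\ll_{E,\ell} 1$. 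When $\ell=1$ the third constraint is vacuous and only the first two matter.

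For the complementary range $1 \le Q \le (\log x)^\gamma$, I would invoke \cref{thm:siegel-walfisz} directly: summed trivially over $q \le Q$ and primitive characters $\chi\pmod q$, it produces a bound of the form $Q^2 \cdot x\exp(-c_{B,E,\ell}\sqrt{\log x})$, which is absorbed into $x(\log x)^{-D}$ for any $D > 0$. Passing from $\psi(\cdot,\Sym^\ell E\otimes\chi)$ to $\psi_{0,\ell,E}$ introduces only the $O(\ell(\log x)(\log qN_E))$ error from primes $p\mid qN_E$, and passing from $\psi_{0,\ell,E}$ to $\psi_{r,\ell,E}$ costs only a factor $(\log x)^r$ via a single partial summation. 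The main obstacle is really just the bookkeeping of matching each of the three distinct upper bounds on $Q$ to the corresponding term in \eqref{eq:gallagher-final} it is designed to kill; the deep analytic input (Ramachandra's mean-square estimates, the large sieve, and Siegel--Walfisz) has already been established in the preceding subsections.
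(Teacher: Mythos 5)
Your proposal is correct and follows essentially the same route as the paper: combine the large-sieve and Ramachandra estimates into \cref{eq:gallagher-final}, choose $z=Q(\log x)^{\gamma}$ and match each of the three constraints on $Q$ to the term it kills for $Q\ge(\log x)^{\gamma}$, then treat $Q\le(\log x)^{\gamma}$ via \cref{thm:siegel-walfisz} together with the $O(\ell(\log x)(\log qN_E))$ correction at ramified primes and the $(\log x)^{r}$ cost of passing from $\psi_{0,\ell}$ to $\psi_{r,\ell}$. Your explicit choice $\gamma\ge D+(\ell+2)^{2}2^{2\ell+2}$ is in fact the one consistent with the exponent appearing in \cref{eq:gallagher-final}, so no issues there.
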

Now we prove a conversion result that allows us to transform this into a more manageable statement of Bombieri--Vinogradov type.
\begin{proposition}\label{prop:bombieri-conversion}
Fix $\theta < \frac{1}{2}$ and $\ell\ge 1$. 
Then for any $r$, we have that
\begin{align*}\sum_{\substack{q\leq x^\theta }}\sup_{\substack{(a,q)=1\\y\leq x}} |\psi_{r,\ell,E}(y;q,a)|&\ll (\log x)^2(\log \log x)^2\sup_{Q\leq x^\theta}\frac{1}{Q}\sum_{\substack{q\leq Q}}\sup_{y\le x}{\sum_{\chi}}^* |\psi_{r,\ell,E}(y,\chi)| \\
&\quad + O((\ell+1)x^\theta(\log x)^{r+2}).
\end{align*}
\end{proposition}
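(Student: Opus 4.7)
The plan is to follow the classical Bombieri--Vinogradov conversion argument as in Davenport \cite{DA82}, adapted to our $U_\ell$-weighted prime sums. First, I would apply character orthogonality to write
\[
\psi_{r,\ell,E}(y;q,a) = \frac{1}{\varphi(q)}\sum_{\chi \bmod q}\overline{\chi}(a)\,\psi_{r,\ell,E}(y,\chi),
\]
so that
\[
\sup_{(a,q)=1,\,y\le x}|\psi_{r,\ell,E}(y;q,a)| \le \sup_{y\le x}\frac{1}{\varphi(q)}\sum_{\chi \bmod q}|\psi_{r,\ell,E}(y,\chi)|.
\]
Each $\chi \bmod q$ is induced by a unique primitive character $\chi^*$ of some conductor $q^* \mid q$; since $\chi(p) = 0$ for primes $p\mid q$ with $p \nmid q^*$ while $|U_\ell|\le \ell+1$, I would show
\[
|\psi_{r,\ell,E}(y,\chi) - \psi_{r,\ell,E}(y,\chi^*)| \ll (\ell+1)(\log x)^{r+2}
\]
by summing the contributions from the $O(\log x)$ extra primes dividing $q/q^*$, each of which contributes at most $O((\log x)^{r+1})$ from its prime powers up to $y$.

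Next I would reindex the sum over $q\le x^\theta$ by the primitive conductor $q^*$ and $n := q/q^*$. The error from replacing $\chi$ with $\chi^*$ accumulates to
\[
\sum_{q\le x^\theta}\frac{1}{\varphi(q)}\sum_{q^*\mid q}\varphi^*(q^*)\cdot O\bigl((\ell+1)(\log x)^{r+2}\bigr) = O\bigl((\ell+1)\,x^\theta(\log x)^{r+2}\bigr),
\]
using the identity $\sum_{q^*\mid q}\varphi^*(q^*) = \varphi(q)$ (where $\varphi^*(q^*)$ counts primitive characters mod $q^*$), which matches the claimed error. For the main term, the bounds $\varphi(q^*n)\ge \varphi(q^*)\varphi(n)$ and $\sum_{n\le N}\varphi(n)^{-1}\ll \log N$ give
\[
\sum_{\substack{q\le x^\theta \\ q^*\mid q}}\frac{1}{\varphi(q)}\ll\frac{\log x}{\varphi(q^*)},
\]
after which $\varphi(q^*)^{-1}\ll (\log\log x)/q^*$ reduces matters to bounding $\sum_{q^*\le x^\theta}(q^*)^{-1}\sup_{y\le x}{\sum_{\chi^*}}^*|\psi_{r,\ell,E}(y,\chi^*)|$. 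A dyadic decomposition $q^*\in[Q/2,Q]$ for $Q=2,4,\ldots,2x^\theta$ then converts this into $O(\log x)$ applications of $\sup_{Q\le 2x^\theta}Q^{-1}\sum_{q\le Q}\sup_{y\le x}{\sum_{\chi}}^*|\psi_{r,\ell,E}(y,\chi)|$.

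Combining all the estimates then yields the proposition, with the powers of $\log x$ and $\log\log x$ falling out of the chain above (any slight slack is absorbed by the stated exponents). No single step is delicate; the main obstacle will be the bookkeeping around nested suprema — in particular the harmless interchange $\sup_y\sum_{q^*\mid q}{\sum_{\chi^*}}^*|\psi|\le \sum_{q^*\mid q}\sup_y{\sum_{\chi^*}}^*|\psi|$ — and the careful tracking of the divisibility condition $q^*\mid q$ against the weight $1/\varphi(q)$ through both the main and the error contributions.
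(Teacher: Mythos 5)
Your argument is correct and is essentially the paper's proof: the paper simply cites the analogous conversion in Murty--Murty \cite[Proposition~1.8]{MM87}, which is exactly this classical chain of orthogonality, passage to the inducing primitive character with an $O((\ell+1)(\log x)^{r+2})$ discrepancy per character, reindexing over conductors with $\sum_{q\le x^\theta,\,q^*\mid q}\varphi(q)^{-1}\ll (\log x)/\varphi(q^*)$ and $\varphi(q^*)^{-1}\ll(\log\log x)/q^*$, and a dyadic decomposition in the conductor. The only cosmetic points are that your dyadic blocks should be anchored so the supremum runs over $Q\le x^\theta$ rather than $2x^\theta$ (or one absorbs the factor $2$), and that your bound actually saves a factor of $\log\log x$ over the stated one, which is harmless.
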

\begin{proof}
Similar to \cite[Proposition~1.8]{MM87}.
\end{proof}
\begin{proof}[Proof of \cref{conj:bombieri-sato-tate-antepenultimate}]
Combining \cref{prop:almost-bombieri,prop:bombieri-conversion}, and choosing $\eps, \eps'$ small enough depending on $\theta < 1/\max(2,\ell-1)$, we immediately obtain the result.
\end{proof}

\section*{Acknowledgements}

The authors wish to thank Professor Ken Ono and Professor Jesse Thorner for their guidance and suggestions.
We are grateful for the support of Emory University, the Asa Griggs Candler Fund, the NSA (grant H98230-19-1-0013), the NSF (grants 1557960 and 1849959), and the Spirit of Ramanujan Talent Initiative.

\appendix
\section{Minorization of indicator functions}\label{app:minorization}

\begin{lemma}\label{lem:Sym-8-minorization}
Let $[\alpha,\beta]\subseteq[-1,1]$.
We consider the following polynomials:
\begin{enumerate}
    \item $f_1(x):=-(x-\alpha)(x-\beta)[(x-x_1)(x-x_2)(x-x_3)]^2$,
    \item $f_2(x):=-(x-\alpha)(x-\beta)(1-x^2)[(x-x_1)(x-x_2)]^2$.
\end{enumerate}
If for some choice of $x_1,x_2,x_3\in[-1,1]$ we have $\int_{-1}^1f_i(x)\mu_{ST}(dx)>0$ for some $i\in\{1,2\}$, then $[\alpha,\beta]$ is $\Sym^8$-minorizable.
\end{lemma}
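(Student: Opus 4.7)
The plan is to verify minorizability directly from the two sign facts built into the factorizations of $f_1$ and $f_2$, and then use a single rescaling to produce a valid minorant. I would proceed in four short steps.

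First I would check degrees: $f_1$ has degree $1+1+2\cdot 3=8$ and $f_2$ has degree $1+1+2+2\cdot 2=8$, so both lie in the span of $\{U_0,U_1,\ldots,U_8\}$ and may be expanded as $\sum_{j=0}^{8} b_j U_j$ with real coefficients; this puts any positive scalar multiple in the form demanded by \cref{def:sym-minorizable}. Second, I would carry out the sign analysis on $[-1,1]$: in both definitions the factor $-(x-\alpha)(x-\beta)$ is nonnegative exactly on $[\alpha,\beta]$ and nonpositive on $[-1,1]\setminus[\alpha,\beta]$, while the remaining factor is either a perfect square (for $f_1$) or a perfect square times $(1-x^2)$ (for $f_2$), hence nonnegative on $[-1,1]$. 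Therefore $f_i(x)\le 0$ for all $x\in[-1,1]\setminus[\alpha,\beta]$.

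Third, assuming $\int_{-1}^{1} f_i\,d\mu_{ST}>0$, I would set $M:=\max_{x\in[-1,1]}f_i(x)$ and observe that $M>0$, since a pointwise nonpositive function cannot have strictly positive integral against $\mu_{ST}$. The rescaled polynomial $g:=f_i/M$ then satisfies $g(x)\le 1$ for $x\in[\alpha,\beta]$ and $g(x)\le 0$ for $x\in[-1,1]\setminus[\alpha,\beta]$, so $g(x)\le \one_{[\alpha,\beta]}(x)$ throughout $[-1,1]$. Fourth, because $U_0\equiv 1$ and the $U_\ell$ are orthonormal with respect to $\mu_{ST}$, the coefficient $b_0$ of $U_0$ in the expansion of $g$ equals $\int_{-1}^1 g\,d\mu_{ST} = M^{-1}\int_{-1}^1 f_i\,d\mu_{ST}>0$, which is precisely the positivity condition in \cref{def:sym-minorizable}.

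There is no substantive obstacle here: the argument is a direct verification. The only ``subtle'' point worth isolating is the existence of a uniform scaling constant $M>0$, which is why positivity of the Sato--Tate integral of $f_i$ is the hypothesis one actually needs (and not merely, say, that $f_i$ is not identically nonpositive). Once this is in place, the same scheme would work for any degree-$\ell$ factorization of the form ``$-(x-\alpha)(x-\beta)$ times a nonnegative polynomial on $[-1,1]$ of degree $\ell-2$'', so in particular the two displayed candidates cover the $\ell_{\max}=8$ case appearing in \cref{thm:simplified-bounded-gaps} and \cref{thm:simplified-green-tao}.
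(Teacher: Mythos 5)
Your argument is correct and amounts to spelling out exactly the verification the paper compresses into ``clear from the definition'': the sign analysis showing $f_i\le 0$ on $[-1,1]\setminus[\alpha,\beta]$, rescaling by $M=\max_{[-1,1]}f_i>0$ to obtain a genuine minorant of $\one_{[\alpha,\beta]}$ of degree $8$, and identifying the $U_0$-coefficient with the (positive) Sato--Tate average via orthonormality. In particular, your observation that the positive-integral hypothesis is what guarantees $M>0$ (so the rescaling is legitimate and preserves $b_0>0$) is precisely the one point worth making explicit, and there is no gap.
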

\begin{proof}
This is clear from the definition of $\Sym^8$-minorization.
\end{proof}

\begin{example}\label{ex:explicit-gap}
Let $I = [-1,-5/6]$. Then $\mu_{ST}(I) = 0.0398$ and $I$ can be $\Sym^8$-minorized by the polynomial $f(x) = (x-1)(x+5/6)(x+0.4)^2(x-0.16)^2(x-0.68)^2$
with corresponding $b_0 = 0.001017$. To achieve bounded gaps, by \cref{prop:maynard-primes-in-interval}, we need
$b_0M_k\theta/2 > 1$, i.e., $M_k>13766$. By the bound on $M_k$ given in \cite[Proposition~4.5]{T14}, we want $k\ge 213$ such that
\[\log k - 2\log\log k -2 > 13766,\]
so it suffices to pick $k =\lceil e^{13787.1}\rceil$ and take $\mc{H}$ to be the first $k$ prime numbers greater than $k$. By \cite{Du99}, for $n\geq 6$, the $n$th prime number satisfies the bound
\[ n(\log n + \log \log n-1)\leq p_n\leq n(\log n + \log \log n).\]
In particular, this shows that the number of primes $\leq k$ is at most 
\[\frac{k}{\log k-\log\log k-1} \leq \frac{k}{13776}.\] 
Therefore, the largest number in $\mathcal{H}$ is at most $p_{k+k/13776} \leq   10^{5991.81}$, hence we have
\[\liminf_{n\rightarrow \infty} (p_{I,n+1} - p_{I,n}) \leq \sup_{x,y\in \mathcal{H}}|x-y| \leq 10^{5992}.\]
\end{example}
\begin{lemma}\label{lem:half-of-intervals}
If we sample endpoints of an interval $I\subseteq [-1, 1]$ according to the Sato--Tate measure, then with at least a $50.74\%$ chance, $I$ can be $\Sym^8$-minorized. 
\end{lemma}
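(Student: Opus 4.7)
The plan is to apply \cref{lem:Sym-8-minorization} and reduce to a computer-assisted measure computation in the $(\alpha,\beta)$-plane. Define
\[
\mc{G} := \Bigl\{(\alpha,\beta) : -1 \le \alpha \le \beta \le 1,\ \max_{x_i\in[-1,1]} \int_{-1}^1 f_1\, d\mu_{ST} > 0 \text{ or } \max_{x_i\in[-1,1]} \int_{-1}^1 f_2\, d\mu_{ST} > 0 \Bigr\}.
\]
Since the two endpoints are i.i.d.\ draws from $\mu_{ST}$, the probability in the statement equals $2\mu_{ST}^{\otimes 2}(\mc{G})$ by symmetry, so it suffices to show $\mu_{ST}^{\otimes 2}(\mc{G}) \ge 0.2537$.

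First, I would expand $\int f_i\, d\mu_{ST}$ symbolically as a polynomial in $(\alpha,\beta)$ and the $x_j$'s, using the explicit Sato--Tate moments $\int_{-1}^1 x^{2j}\, d\mu_{ST} = \frac{1}{j+1}\binom{2j}{j}4^{-j}$ (the odd moments vanishing). Equivalently, one may expand $f_i$ in the Chebyshev basis $\{U_0,\ldots,U_8\}$ and read off the coefficient of $U_0$. For each fixed $(\alpha,\beta)$, testing membership in $\mc{G}$ then becomes a polynomial optimization of bounded degree over a compact box $[-1,1]^3$ or $[-1,1]^2$; interior critical points can be found by solving the system $\nabla_{x} \int f_i\, d\mu_{ST}=0$ via Lagrange multipliers, while boundary maxima are handled by recursion on the dimension. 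In practice I would combine this symbolic analysis with a certified numerical search (interval arithmetic on a fine grid of $x_j$-values) to rigorously determine the sign of the maximum.

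Next, I would partition the triangle $\{-1 \le \alpha \le \beta \le 1\}$ into small cells and, on each cell, try to certify that the entire cell lies in $\mc{G}$ by exhibiting a single tuple $(x_1,x_2,x_3)$ or $(x_1,x_2)$ whose associated integral is uniformly positive throughout the cell (which is checkable via interval arithmetic because the integral is polynomial in $\alpha,\beta$). Summing the $\mu_{ST}^{\otimes 2}$-mass of all certified cells gives a rigorous lower bound on $\mu_{ST}^{\otimes 2}(\mc{G})$, and I would refine the grid until this lower bound exceeds $0.2537$.

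The main obstacle will be the rigor of the verification near the boundary $\partial\mc{G}$, i.e.\ the curve where the optimal $(x_1,x_2,x_3)$-choice just barely makes the integral vanish. There, naive discretization may undercount or overcount cells; I expect this to be resolved either by interval arithmetic combined with adaptive refinement near $\partial\mc{G}$, or by deriving an explicit analytic description of $\partial\mc{G}$ from the optimality conditions in $x_j$ (since near the boundary the minimizing configuration should come from Gaussian-quadrature-like constraints tied to $(x-\alpha)(x-\beta)\, d\mu_{ST}$, a signed measure of known form).
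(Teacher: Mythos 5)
Your proposal is correct in outline and shares the paper's overall strategy (reduce to \cref{lem:Sym-8-minorization} and certify a sufficiently massive subset of the triangle $\{-1\le\alpha\le\beta\le 1\}$ by a computer search, then double by symmetry), but the rigor mechanism differs in a way worth noting. The paper never needs interval arithmetic in $(\alpha,\beta)$, nor any optimization over the $x_j$'s: it exploits the monotonicity of minorizability under inclusion of intervals --- if $u_-\le\one_{[\alpha,\beta]}$ with $b_0>0$, the same polynomial minorizes $\one_{[\alpha',\beta']}$ for every $\alpha'\le\alpha\le\beta\le\beta'$. Hence a single evaluation of $\int f_i\,\mu_{ST}(dx)$ at one grid point $(\alpha,\beta,x_1,x_2,x_3)$ certifies the entire product region $[\alpha-\eta_2,\alpha]\times[\beta,1]$ of endpoint pairs, and the algorithm simply accumulates the mass $\mu_{ST}([\alpha-\eta_2,\alpha])\,\mu_{ST}([\beta,1])$ for the smallest certified $\beta$ at each $\alpha$; summing these and doubling gives the stated $50.74\%$ lower bound directly. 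Your scheme --- certify each small cell in the $(\alpha,\beta)$-triangle by exhibiting one witness tuple whose integral is positive uniformly on the cell, verified by interval arithmetic on a polynomial in $(\alpha,\beta)$ --- is also a valid way to get a rigorous lower bound, but it is heavier than necessary, and the machinery you sketch for locating the maximum over the $x_j$'s (Lagrange multipliers, boundary recursion, analytic description of $\partial\mc{G}$) is superfluous: for a lower bound one only needs a witness, not the optimum, and uncertainty near $\partial\mc{G}$ costs you sharpness, never correctness. In short, your route works, but the paper's inclusion-monotonicity observation replaces all of your cell-certification and boundary analysis with a plain finite grid search.
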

\begin{proof}
We use a brute-force computer program that provides a lower bound on the proportion of intervals $[\alpha,\beta]$ which satisfy the hypothesis of \cref{lem:Sym-8-minorization}.
The idea behind our program is to use the two forms of polynomials described in \cref{lem:Sym-8-minorization} as candidate minorizations of $I$. 
Our implementation is as follows, with $\eta_1 = 0.01$ and $\eta_2 = 0.0025$.
The notation $var\leftarrow val$ means that the value $val$ is assigned to the variable $var$.
\begin{enumerate}
    \item[1.] Initialize $S = 0$ and $\alpha=\beta=x_1=x_2=x_3 = -1$.
    \item[2.] Augment each parameter $x_1,x_2$, and $x_3$ by $\eta_1$ in nested loops from $-1$ to $1$, until one of the following cases happens:
    \begin{itemize}
        \item[a.] All $x_i$ reach $1$. If $\beta<1$, set $\beta \leftarrow \beta + \eta_2$, reset $x_i \leftarrow -1$ for all $i$, and repeat step 2. If $\beta=1$, then set $\alpha \leftarrow \alpha + \eta_2$ and reset $x_i \leftarrow -1$ for all $i$, and then set $\beta \leftarrow \alpha$. Then repeat step 2.
        
        \item[b.] The assumptions of \cref{lem:Sym-8-minorization} are satisfied for the current values of $\alpha,\beta,x_1,x_2$, and $x_3$. Accordingly, increment $S$ by the quantity $\mu_{ST}([\alpha - \eta_2,\alpha])\mu_{ST}([\beta,1])$ if $\alpha \neq -1$. Then increment $\alpha \leftarrow \alpha + \eta_2$ and reset $x_1 ,x_2,x_3 \leftarrow -1$ and $\beta \leftarrow \alpha$.
        Then repeat step 2.
    \end{itemize}
    \item[3.] When $\alpha$ reaches $1$, return the final value of $S$.
\end{enumerate}
Here, $2S$ is a lower bound for the proportion of all closed subintervals $I\subseteq [-1,1]$ in Sato--Tate measure that are $\Sym^8$-minorizable. (As we only consider the case when $\alpha < \beta$; such a sample space has Sato--Tate measure $1/2$.)
\end{proof}

\begin{lemma}\label{lem:minorization-big-measure}
If $I=[\alpha,\beta]\subseteq [-1,1]$ has Sato--Tate measure $\mu_{ST}(I) \geq 0.36$, then $I$ is $\Sym^8$-minorizable.
\end{lemma}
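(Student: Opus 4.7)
The plan is a computer-assisted verification, analogous to the proof of \cref{lem:half-of-intervals}, but now restricted to intervals $I = [\alpha, \beta]$ with $\mu_{ST}(I) \geq 0.36$. I would first exploit the $x \mapsto -x$ symmetry of the Sato--Tate measure: if $[\alpha, \beta]$ is $\Sym^8$-minorizable via $f_i$ from \cref{lem:Sym-8-minorization} with auxiliary parameters $(x_1, x_2, x_3)$, then $[-\beta, -\alpha]$ is minorized by the reflected polynomial with parameters $(-x_1, -x_2, -x_3)$, since $\mu_{ST}$ is invariant under $x \mapsto -x$ and the forms of $f_1, f_2$ are preserved under this reflection. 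Hence it suffices to handle the reduced compact set $K = \{(\alpha, \beta) : -1 \le \alpha \le \beta \le 1, \, \alpha + \beta \le 0, \, \mu_{ST}([\alpha, \beta]) \ge 0.36\}$.

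The next step is to evaluate $\int_{-1}^1 f_i \, \mu_{ST}(dx)$ in closed form. Since $\mu_{ST}$ has even moments $\int_{-1}^1 x^{2k} \, \mu_{ST}(dx) = \frac{1}{4^k}\binom{2k}{k}\frac{1}{k+1}$ (Catalan numbers) and vanishing odd moments, these integrals are explicit polynomials in $(\alpha, \beta, x_1, x_2, x_3)$ (respectively $(\alpha, \beta, x_1, x_2)$ for $f_2$). With this closed form, I would run a grid search: at each grid point $(\alpha, \beta) \in K$, scan a grid of $(x_1, x_2, x_3) \in [-1, 1]^3$ to find one making the integral positive. A natural heuristic to try first is to place the auxiliary roots in the complement $[-1, \alpha] \cup [\beta, 1]$ in order to dampen the contribution to the integral from outside $I$ (where $-(x-\alpha)(x-\beta) \le 0$), distributing them in proportion to the $\mu_{ST}$-measures of the two complementary pieces.

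The main obstacle is making the grid search rigorous rather than heuristic: one must show that a positive value of the integral at a grid point propagates to the surrounding grid cell. Since the integrand is a polynomial in $(\alpha, \beta, x_1, x_2, x_3)$ on the compact domain $[-1,1]^5$, it has an explicitly computable Lipschitz constant in each variable; the grid mesh size $\eta$ must be chosen small enough that the margin of positivity at each grid point exceeds the worst-case variation over its cell. The computation exhibited in \cref{ex:explicit-gap} already yields a $\Sym^8$-minorization for an interval of Sato--Tate measure $\approx 0.04$, which provides strong numerical evidence that the search will succeed with considerable slack at the much higher threshold $0.36$, and that the required mesh size $\eta$ need not be too small for the verification to terminate.
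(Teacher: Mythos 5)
Your proposal is correct and is essentially the paper's argument: a computer-assisted grid search over the endpoints $(\alpha,\beta)$ and the auxiliary roots, using the candidate polynomials of \cref{lem:Sym-8-minorization} and checking positivity of their $\mu_{ST}$-averages. The only real difference is the rigor step: instead of your Lipschitz-constant propagation across grid cells, the paper's algorithm (a modification of the one proving \cref{lem:half-of-intervals}) exploits the fact that a minorant of $\one_{[\alpha,\beta]}$ is automatically a minorant of $\one_{I}$ for any $I\supseteq[\alpha,\beta]$, so it suffices that some grid interval contained in $I$ be certified minorizable --- which removes the need for explicit Lipschitz estimates.
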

\begin{proof}
Modify the algorithm in the proof of \cref{lem:half-of-intervals} as follows:
\begin{enumerate}
    \item[1.] Each time that the assumptions of \cref{lem:Sym-8-minorization} are satisfied for the current values of $\alpha,\beta,x_1,x_2$, and $x_3$, we replace $S$ by the quantity $\max \{S, \mu_{ST}([\alpha-\eta_2,\beta])\}$ if $\alpha\neq -1$.
    \item[2.] Return the final value of $S$. 
\end{enumerate}
Here, $S$ is the measure of the smallest interval (in our search space) which is $\Sym^8$-minorizable.
\end{proof}

\bibliographystyle{plain}
\bibliography{main}

\begin{thebibliography}{10}

\bibitem{BZ16}
Roger~C. Baker and Liangyi Zhao.
\newblock Gaps between primes in {B}eatty sequences.
\newblock {\em Acta Arith.}, 172(3):207--242, 2016.

\bibitem{BGHT11}
Tom Barnet-Lamb, David Geraghty, Michael Harris, and Richard Taylor.
\newblock A family of {C}alabi-{Y}au varieties and potential automorphy {II}.
\newblock {\em Publ. Res. Inst. Math. Sci.}, 47(1):29--98, 2011.

\bibitem{BCDT01}
Christophe Breuil, Brian Conrad, Fred Diamond, and Richard Taylor.
\newblock On the modularity of elliptic curves over {$\bold Q$}: wild 3-adic
  exercises.
\newblock {\em J. Amer. Math. Soc.}, 14(4):843--939, 2001.

\bibitem{CT15}
Laurent Clozel and Jack~A. Thorne.
\newblock Level raising and symmetric power functoriality, {II}.
\newblock {\em Ann. of Math. (2)}, 181(1):303--359, 2015.

\bibitem{CT17}
Laurent Clozel and Jack~A. Thorne.
\newblock Level-raising and symmetric power functoriality, {III}.
\newblock {\em Duke Math. J.}, 166(2):325--402, 2017.

\bibitem{CDT99}
Brian Conrad, Fred Diamond, and Richard Taylor.
\newblock Modularity of certain potentially {B}arsotti-{T}ate {G}alois
  representations.
\newblock {\em J. Amer. Math. Soc.}, 12(2):521--567, 1999.

\bibitem{DA82}
Harold Davenport.
\newblock {\em Multiplicative number theory}.
\newblock Graduate Texts in Mathematics, Vol. 74. Springer, 2nd edition, 1982.

\bibitem{DGMP18}
C.~David, A.~Gafni, A.~Malik, N.~Prabhu, and C.~Turnage-Butterbaugh.
\newblock Extremal primes of elliptic curves without complex multiplication.
\newblock arXiv:1807.05255, to appear in \emph{Proc. Amer. Math. Soc.}

\bibitem{Di96}
Fred Diamond.
\newblock On deformation rings and {H}ecke rings.
\newblock {\em Ann. of Math. (2)}, 144(1):137--166, 1996.

\bibitem{Du99}
Pierre Dusart.
\newblock The {$k$}th prime is greater than {$k(\ln k+\ln\ln k-1)$} for
  {$k\geq2$}.
\newblock {\em Math. Comp.}, 68(225):411--415, 1999.

\bibitem{G70}
P.~X. Gallagher.
\newblock A large sieve density estimate near {$\sigma =1$}.
\newblock {\em Invent. Math.}, 11:329--339, 1970.

\bibitem{GJ78}
Stephen Gelbart and Herv\'{e} Jacquet.
\newblock A relation between automorphic representations of {${\rm GL}(2)$} and
  {${\rm GL}(3)$}.
\newblock {\em Ann. Sci. \'{E}cole Norm. Sup. (4)}, 11(4):471--542, 1978.

\bibitem{GPY09}
Daniel~A. Goldston, J\'{a}nos Pintz, and Cem~Y. Y{\i}ld{\i}r{\i}m.
\newblock Primes in tuples. {I}.
\newblock {\em Ann. of Math. (2)}, 170(2):819--862, 2009.

\bibitem{HB19}
Peter Humphries.
\newblock Standard zero-free regions for {R}ankin--{S}elberg {L}-functions via
  sieve theory.
\newblock {\em Math. Z.}, 292(3-4):1105--1122, 2019.
\newblock With an appendix by Farrell Brumley.

\bibitem{IK04}
Henryk Iwaniec and Emmanuel Kowalski.
\newblock {\em Analytic number theory}, volume~53 of {\em American Mathematical
  Society Colloquium Publications}.
\newblock American Mathematical Society, Providence, RI, 2004.

\bibitem{K03}
Henry~H. Kim.
\newblock Functoriality for the exterior square of {${\rm GL}_4$} and the
  symmetric fourth of {${\rm GL}_2$}.
\newblock {\em J. Amer. Math. Soc.}, 16(1):139--183, 2003.
\newblock With appendix 1 by Dinakar Ramakrishnan and appendix 2 by Kim and
  Peter Sarnak.

\bibitem{KS02}
Henry~H. Kim and Freydoon Shahidi.
\newblock Functorial products for {${\rm GL}_2\times{\rm GL}_3$} and the
  symmetric cube for {${\rm GL}_2$}.
\newblock {\em Ann. of Math. (2)}, 155(3):837--893, 2002.
\newblock With an appendix by Colin J. Bushnell and Guy Henniart.

\bibitem{LT18}
Robert~J. Lemke~Oliver and Jesse Thorner.
\newblock {Effective Log-Free Zero Density Estimates for Automorphic
  L-Functions and the Sato–Tate Conjecture}.
\newblock {\em International Mathematics Research Notices}, 02 2018.

\bibitem{Ma15}
James Maynard.
\newblock Small gaps between primes.
\newblock {\em Ann. of Math. (2)}, 181(1):383--413, 2015.

\bibitem{Mo99}
Giuseppe Molteni.
\newblock {\em L-functions: {S}iegel-type theorems and structure theorems}.
\newblock PhD thesis, University of Milan, Milan, 1999.

\bibitem{MM87}
M.~Ram Murty and V.~Kumar Murty.
\newblock A variant of the {B}ombieri-{V}inogradov theorem.
\newblock In {\em Number theory ({M}ontreal, {Q}ue., 1985)}, volume~7 of {\em
  CMS Conf. Proc.}, pages 243--272. Amer. Math. Soc., Providence, RI, 1987.

\bibitem{N92}
Karl~K. Norton.
\newblock Upper bounds for sums of powers of divisor functions.
\newblock {\em J. Number Theory}, 40(1):60--85, 1992.

\bibitem{Pi10}
J\'{a}nos Pintz.
\newblock Are there arbitrarily long arithmetic progressions in the sequence of
  twin primes?
\newblock In {\em An irregular mind}, volume~21 of {\em Bolyai Soc. Math.
  Stud.}, pages 525--559. J\'{a}nos Bolyai Math. Soc., Budapest, 2010.

\bibitem{Pi16}
J\'{a}nos Pintz.
\newblock Polignac numbers, conjectures of {E}rdős on gaps between primes,
  arithmetic progressions in primes, and the bounded gap conjecture.
\newblock In {\em From arithmetic to zeta-functions}, pages 367--384. Springer,
  [Cham], 2016.

\bibitem{Pi17}
J\'{a}nos Pintz.
\newblock Patterns of primes in arithmetic progressions.
\newblock In {\em Number theory---{D}iophantine problems, uniform distribution
  and applications}, pages 369--379. Springer, [Cham], 2017.

\bibitem{Po14}
D.~H.~J. Polymath.
\newblock Variants of the {S}elberg sieve, and bounded intervals containing
  many primes.
\newblock {\em Res. Math. Sci.}, 1:Art. 12, 83, 2014.

\bibitem{Rad60}
Hans Rademacher.
\newblock On the {P}hragm\'{e}n-{L}indel\"{o}f theorem and some applications.
\newblock {\em Math. Z.}, 72:192--204, 1959/1960.

\bibitem{Ram74}
K.~Ramachandra.
\newblock A simple proof of the mean fourth power estimate for {$\zeta
  (1/2+it)$}\ and {$L(1/2+it,\,X)$}.
\newblock {\em Ann. Scuola Norm. Sup. Pisa Cl. Sci. (4)}, 1:81--97 (1975),
  1974.

\bibitem{Rou07}
Jeremy Rouse.
\newblock Atkin-{S}erre type conjectures for automorphic representations on
  {${\rm GL}(2)$}.
\newblock {\em Math. Res. Lett.}, 14(2):189--204, 2007.

\bibitem{RT17}
Jeremy Rouse and Jesse Thorner.
\newblock The explicit {S}ato-{T}ate conjecture and densities pertaining to
  {L}ehmer-type questions.
\newblock {\em Trans. Amer. Math. Soc.}, 369(5):3575--3604, 2017.

\bibitem{RS96}
Ze\'{e}v Rudnick and Peter Sarnak.
\newblock Zeros of principal {$L$}-functions and random matrix theory.
\newblock {\em Duke Math. J.}, 81(2):269--322, 1996.
\newblock A celebration of John F. Nash, Jr.

\bibitem{ST19}
Kannan Soundararajan and Jesse Thorner.
\newblock Weak subconvexity without a {R}amanujan hypothesis.
\newblock {\em Duke Math. J.}, 168(7):1231--1268, 2019.
\newblock With an appendix by Farrell Brumley.

\bibitem{TW95}
Richard Taylor and Andrew Wiles.
\newblock Ring-theoretic properties of certain {H}ecke algebras.
\newblock {\em Ann. of Math. (2)}, 141(3):553--572, 1995.

\bibitem{T14}
Jesse Thorner.
\newblock Bounded gaps between primes in {C}hebotarev sets.
\newblock {\em Res. Math. Sci.}, 1:Art. 4, 16, 2014.

\bibitem{VW17}
Akshaa Vatwani and Peng-Jie Wong.
\newblock Patterns of primes in {C}hebotarev sets.
\newblock {\em Int. J. Number Theory}, 13(7):1651--1677, 2017.

\bibitem{W95}
Andrew Wiles.
\newblock Modular elliptic curves and {F}ermat's last theorem.
\newblock {\em Ann. of Math. (2)}, 141(3):443--551, 1995.

\bibitem{Z14}
Yitang Zhang.
\newblock Bounded gaps between primes.
\newblock {\em Ann. of Math. (2)}, 179(3):1121--1174, 2014.

\end{thebibliography}

\end{document}